\newtheorem{thm}{Theorem}[section]
\newtheorem{lem}[thm]{Lemma}
\newtheorem{prop}[thm]{Proposition}
\newtheorem{cor}[thm]{Corollary}
\newtheorem{rem}[thm]{Remark}
\newtheorem{dfn}[thm]{Definition}
\DeclareMathOperator{\Z}{\mathbb{Z}}
\DeclareMathOperator{\SH}{\mathcal{SH}}
\DeclareMathOperator{\DM}{\mathcal{DM}}
\DeclareMathOperator{\D}{\mathcal{D}}
\DeclareMathOperator{\X}{\mathfrak{X}}
\DeclareMathOperator{\A}{\mathcal{A}}
\DeclareMathOperator{\M}{\mathcal{M}}
\DeclareMathOperator{\G}{\mathcal{G}}
\DeclareMathOperator{\Pa}{\mathcal{P}}
\DeclareMathOperator{\hl}{holim}
\DeclareMathOperator{\mgl}{\mathrm{MGL}}
\DeclareMathOperator{\mbp}{\mathrm{MBP}}
\DeclareMathOperator{\bgl}{\mathrm{BGL}}
\DeclareMathOperator{\s}{\mathrm{S}}
\DeclareMathOperator{\com}{\mathbf{Comod}}
\DeclareMathOperator{\mo}{\mathbf{Mod}}
\DeclareMathOperator{\st}{\mathbf{Stable}}
\DeclareMathOperator{\hz}{\mathrm{H}{\mathbb Z}/2}
\DeclareMathOperator{\F}{{\mathbb F}_2}
\DeclareMathOperator{\Hom}{\mathrm{Hom}}
\DeclareMathOperator{\Ext}{\mathrm{Ext}}
\DeclareMathOperator{\Tor}{\mathrm{Tor}}
\title{\textsc{Cellular objects in isotropic motivic categories}}
\author{Fabio Tanania}
\date{}
\begin{document}

\maketitle

\begin{abstract}
Our main purpose is to describe the category of isotropic cellular spectra over flexible fields. Guided by \cite{GWX}, we show that it is equivalent, as a stable $\infty$-category equipped with a $t$-structure, to the derived category of left comodules over the dual of the classical topological Steenrod algebra. In order to obtain this result, the category of isotropic cellular modules over the motivic Brown-Peterson spectrum is also studied, and isotropic Adams and Adams-Novikov spectral sequences are developed. As a consequence, we also compute hom sets in the category of isotropic Tate motives between motives of isotropic cellular spectra. 
\end{abstract}

\section{Introduction}

Isotropic categories are local versions of motivic categories, obtained by, roughly speaking, killing all anisotropic varieties. Although they often have a handier structure than their global versions, they exhibit some key characteristics of both motivic and classical topological phenomena. In \cite{V}, Vishik introduced the isotropic triangulated category of motives and computed the isotropic motivic cohomology of the point, which is strongly related to the Milnor subalgebra. By following this lead, we studied in \cite{T} the isotropic stable motivic homotopy category. In particular, we identified the isotropic motivic homotopy groups of the sphere spectrum with the cohomology of the topological Steenrod algebra, i.e. the $E_2$-page of the classical Adams spectral sequence. These results are quite surprising since they show that topological objects naturally arise from isotropic environments, which could lead to a fruitful exchange between topology and isotropic motivic theory.

Motivic categories, constructed by Morel and Voevodsky (see \cite{MV} and \cite{V3}) in order to study algebraic varieties by topological means, are extremely rich categories. Even over an algebraically closed field, they are more complex than the respective topological counterparts. For example, while every object in the classical stable homotopy category is cellular, i.e. built up by attaching spheres, not every motivic spectrum is cellular, since many algebro-geometric phenomena come into the picture. In spite of this, it is still interesting to understand the structure of the category of cellular objects in motivic stable homotopy theory. This project was initiated by Dugger and Isaksen in \cite{DI2} and much attention has been dedicated to it since then. Our work, in particular, is concerned with understanding the structure of the subcategories of cellular objects in isotropic categories, which we believe could shed light on the deep interconnection with topology.

We have already highlighted that motivic categories are particularly challenging to study. For example, one of the difficulties that one does not encounter in classical topology is the presence of an object $\tau$ that appears in various incarnations throughout motivic homotopy theory, sometimes as an element of the motivic cohomology of the ground field and sometimes as a map in the $2$-complete motivic stable homotopy groups of spheres. Hence, the principal task is to find first some substitutes of the original motivic categories and tools which could help in the process of analysing them. In the case of algebraically closed fields, for example, topological realisation is a very helpful tool, since it allows to study the initial motivic category by looking at its deformation $\tau=1$, which happens to be just the classical stable homotopy theory (see \cite{DI}). However, in this process part of the information is lost and one could try to recover it by studying other deformations, for example $\tau=0$. This was done by Isaksen in \cite{I}, Gheorghe in \cite{G} and Gheorghe-Wang-Xu in \cite{GWX}. More precisely, in \cite{I} the stable motivic homotopy groups of $C{\tau}$, i.e. the cofiber of $\tau$, are identified with the $E_2$-page of the classical Adams-Novikov spectral sequence, while in \cite{G} the motivic spectrum $C{\tau}$ is provided with an $E_{\infty}$-ring structure inducing an isomorphism of rings with higher products between $\pi_{**}(C\tau)$ and the classical Adams-Novikov $E_2$-page. A parallel result for isotropic categories was obtained in \cite{T}, where the isotropic sphere spectrum $\X$ has been equipped with an $E_{\infty}$-ring structure inducing an isomorphism of rings with higher products between $\pi_{**}(\X)$ and the classical Adams $E_2$-page. Moreover, in \cite{GWX} the category of $C{\tau}$-cellular spectra is described, which is proved to be equivalent as a stable $\infty$-category equipped with a $t$-structure (see \cite{Lu}) to the derived category of left $\mathrm{BP}_*\mathrm{BP}$-comodules concentrated in even degrees, where $\mathrm{BP}$ is the Brown-Peterson spectrum and $\mathrm{BP}_*\mathrm{BP}$ its $\mathrm{BP}$-homology.

In this work, we intend to follow a similar path for isotropic categories. Recall that a field $k$ is called flexible if it is a purely transcendental extension of countable infinite degree over some other field. In our situation it is really essential to work over flexible fields since, as highlighted in \cite{V}, these are the ground fields over which the isotropic categories behave particularly well. For example, over algebraically closed fields, due to the lack of anisotropic varieties, the isotropic category would be just the same as the original motivic category, so in this case the isotropic localization produces nothing new. We are encouraged by the evident parallel between the computations of $\pi_{**}(C\tau)$ over complex numbers (see \cite{I} and \cite{G}), on the one hand, and of $\pi_{**}(\X)$ over flexible fields (see \cite{T}), on the other. More precisely, we have been guided by the idea that studying the isotropic stable motivic homotopy category over a flexible field is similar in some sense to studying the stable $\infty$-category of $C{\tau}$-cellular spectra in the motivic stable homotopy category over complex numbers. Indeed, they obviously share some common features which is highlighted by the following theorem that is the main result of this paper.

\begin{thm}
	Let $k$ be a flexible field of characteristic different from $2$. Then, there exists a $t$-exact equivalence of stable $\infty$-categories
	$$\D^b(\A_{*}-\com_{*}) \xrightarrow{\cong} \X-\mo^b_{cell,\hz}$$
	where $\A_{*}$ is the classical dual Steenrod algebra and $\X-\mo^b_{cell,\hz}$ is the stable $\infty$-category of $\hz$-complete $\X$-cellular modules having $\mbp$-homology non-trivial in only finitely many Chow-Novikov degrees (the superscript ``b" stands for ``bounded", see Definition \ref{dc}).
\end{thm}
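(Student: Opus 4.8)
The plan is to transpose the strategy of \cite{GWX} to the isotropic setting, with $\X$ playing the role of $C\tau$, the isotropic mod-$2$ motivic cohomology $\hz$ the role of the unit, the isotropic motivic Brown--Peterson spectrum $\mbp$ the role of $\mathrm{BP}$, and the classical $\A_*$ the role of $\mathrm{BP}_*\mathrm{BP}$. The two technical devices are the isotropic $\hz$-based Adams spectral sequence and the isotropic $\mbp$-based Adams--Novikov spectral sequence, both of which must first be constructed for $\X$-modules. The crucial input, from \cite{T}, is that $\X$ is an $E_\infty$-ring whose homotopy $\pi_{**}(\X)$ is, as a ring with higher products, the cohomology $\Ext_{\A_*}(\F,\F)$ of the classical Steenrod algebra, i.e.\ the classical Adams $E_2$-page; this is the computation that will ultimately force the relevant spectral sequences to collapse.

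The first step is to analyse $\mbp$-modules. From the computation over a flexible field of $\mbp_{**}$ and of the dual $\mbp_{**}\mbp$, one shows that the bounded $\hz$-complete isotropic cellular $\mbp$-modules form an \emph{algebraic} $\infty$-category, equivalent to the bounded derived $\infty$-category of suitably graded $\mbp_{**}$-modules: since $\mbp_{**}$ is concentrated in a single Chow--Novikov degree, the potential Postnikov $k$-invariants of a cellular $\mbp$-module land in groups that vanish for Chow--Novikov degree reasons --- exactly as the evenness of $\mathrm{BP}_*$ is exploited in \cite{GWX}. Feeding this into the $\mbp$-based Adams resolution $\X\to\mbp\rightrightarrows\mbp\wedge_\X\mbp\to\cdots$ identifies bounded $\hz$-complete isotropic cellular $\X$-modules with objects of the bounded derived $\infty$-category of comodules over the Hopf algebroid $(\mbp_{**},\mbp_{**}\mbp)$, and this Hopf algebroid is identified, using the isotropic Steenrod computations of \cite{V} and \cite{T}, with $\A_*$ endowed with its Chow--Novikov grading. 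In particular, the heart of $\X-\mo^b_{cell,\hz}$, for the $t$-structure measuring the Chow--Novikov degree of $\mbp$-homology, is identified with $\A_*-\com_*$; the Chow--Novikov grading simultaneously carries the internal grading on comodules and, under the equivalence, the cohomological grading on $\D^b$, which is what makes the final functor $t$-exact.

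With the heart identified, I would build the comparison functor $\D^b(\A_*-\com_*)\to\X-\mo^b_{cell,\hz}$ as the realization functor attached to the $t$-structure: a cofree comodule $\A_*\otimes V$ is realised by $\hz\wedge_\X(\bigvee_V\Sigma^{s,w}\X)$, an injective resolution of a comodule by the associated cosimplicial $\X$-module, and one totalises; a bounded complex is sent to the resulting finite iterated colimit. Its candidate inverse is $\hz$-homology, refined along the $\hz$-Adams tower so as to land in cochain complexes of $\A_*$-comodules. To see that these functors are mutually inverse, it suffices, by cellularity, exactness and Ind-completion, to check the unit and counit on the generators $\Sigma^{s,w}\X$ and $\Sigma^{s,w}\F$; equivalently, one must show that the natural map $\pi_{**}\,\mathrm{map}_{\X-\mo}(\Sigma^{s,w}\X,\X)\to\Ext_{\A_*}(\Sigma^{s,w}\F,\F)$ is an isomorphism and that the isotropic Adams spectral sequence computing the left-hand side collapses at $E_2$ with no hidden extensions --- which is precisely the ring-with-higher-products statement of \cite{T}, now applied uniformly. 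A standard recognition criterion for compactly generated stable $\infty$-categories then promotes this to an equivalence of Ind-completions, and restricting to bounded parts finishes the proof: a complex of $\A_*$-comodules is bounded exactly when the corresponding $\X$-module has $\mbp$-homology non-trivial in only finitely many Chow--Novikov degrees (Definition \ref{dc}).

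The main obstacle is this collapse-and-formality input. Knowing $\pi_{**}(\X)=\Ext_{\A_*}(\F,\F)$ as a ring does not suffice by itself: one needs the isotropic Adams and Adams--Novikov spectral sequences to degenerate with all higher coherences trivial, \emph{uniformly} over the subcategory of $\X$-cellular modules and as statements about mapping spectra rather than homotopy rings. As in \cite{GWX}, the underlying reason is the Chow--Novikov sparseness of the isotropic category over a flexible field --- every potential differential or $k$-invariant would connect incompatible Chow--Novikov degrees and so vanishes --- but turning this into honest convergence statements, which is why the hypothesis that $\mbp$-homology is concentrated in finitely many Chow--Novikov degrees is imposed, is where the real work lies.
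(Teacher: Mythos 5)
Your outline follows the GWX template that the paper also adopts: endow $\X\wedge\mbp$ with an $E_\infty$-structure, analyse cellular $\X\wedge\mbp$-modules, set up a $\mbp$-based Adams--Novikov spectral sequence over the Hopf algebroid $(\mbp_{**},\mbp_{**}\mbp)\cong(\F,\G_{**})$ with $\G_{**}$ a bigraded copy of $\A_*$, equip $\X-\mo^b_{cell,\hz}$ with the Chow--Novikov $t$-structure, and invoke Lurie's recognition criterion to promote a heart equivalence to a $t$-exact equivalence of $\infty$-categories. This is the paper's route. However, two steps are mis-stated in ways that would actually derail the argument.

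First, you systematically substitute $\hz$-smashing and $\hz$-homology where $\mbp$-smashing and $\mbp$-homology are required. Your proposed realisation $\hz\wedge_\X\bigl(\bigvee_V\Sigma^{s,w}\X\bigr)$ of the cofree comodule $\A_*\otimes V$ has $\mbp$-homology $\G_{**}\otimes_\F H_{**}(k/k)\otimes_\F V$ --- not $\A_*\otimes V$ --- because (Lemma \ref{hiso}) $H^{iso}_{**}(X)\cong H_{**}(k/k)\otimes_\F\mbp_{**}(X)$; the cofree heart objects are instead $\mbp\wedge_\X\bigl(\bigvee_V\Sigma^{s,w}\X\bigr)$, with $\mbp$-homology $\G_{**}\otimes_\F V$ (Lemma \ref{inj}). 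Likewise the inverse on the heart is $\mbp_{**}(-)$, not $\hz$-homology. Relatedly, the description of cellular $\X\wedge\mbp$-modules as a derived category of graded $\mbp_{**}$-modules ``by evenness'' understates the isotropic simplification: $\pi^{iso}_{**}(\mbp)\cong\F$ is a field (Theorem \ref{hgmbp}), so $\X\wedge\mbp-\mo_{cell}$ is outright the category of bigraded $\F$-vector spaces (Theorem \ref{mbpcell}), which is much stronger than GWX's situation and is what makes the Landweber filtration argument of Lemma \ref{ext} work to realise arbitrary heart objects.

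Second, the degeneration mechanism is mis-attributed. You locate it in the $\hz$-Adams spectral sequence and in ``the ring-with-higher-products statement of \cite{T}, now applied uniformly,'' but the paper needs no such uniformization and does not use \cite{T}'s computation as an input. Instead, the $\mbp$-based Adams--Novikov $E_2$-page $\Ext_{\G_{**}}(\mbp_{**}(X),\mbp_{**}(Y))$ is, for heart objects, concentrated on the slope-two line because $\G_{**}$ is, so all $d_r$ with $r\geq 2$ vanish on purely degree grounds (Corollary \ref{morcor}), and the homotopy groups of $\X^\wedge_\hz$ drop out as a corollary rather than being fed in. Lurie's criterion [\cite{Lu}, Prop.\ 1.3.3.7], in the packaged form of \cite[Prop.\ 2.12]{GWX}, then only requires the heart equivalence of Proposition \ref{heart} plus vanishing of $[\Sigma^{-i}X,Y]$ for $i>0$ when $\mbp_{**}(Y)$ is injective over $\G_{**}$ --- which is immediate since in that case the $E_2$-page sits entirely in filtration $s=0$ --- not any formality or higher-coherence statement about mapping spectra.
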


As a consequence, we obtain that the category of isotropic cellular spectra is completely algebraic, which makes it easier to study. Moreover, it is deeply related to classical topology, as foreseeable from results in \cite{V} and \cite{T}.

In order to achieve our main results, we need several tools. In particular, it is necessary to develop and study isotropic versions of both the Adams spectral sequence and the Adams-Novikov spectral sequence. This requires a focus on the motivic Brown-Peterson spectrum $\mbp$ (see \cite{Ve}) from an isotropic point of view. In particular, we note that the isotropic Brown-Peterson spectrum is an $E_{\infty}$-ring spectrum, in contrast to the topological picture where $\mathrm{BP}$ has shown not to admit an $E_{\infty}$-ring structure by Lawson in \cite{La}. Then, we use techniques developed by Gheorghe-Wang-Xu in \cite{GWX}, based on Lurie's results (see \cite{Lu}), to, first, describe in algebraic terms the category of isotropic $\mbp$-cellular modules and, then, the category of all isotropic cellular spectra. At the end, we are also able to provide some results about the cellular subcategory of the isotropic triangulated category of motives, i.e. the category of isotropic Tate motives.\\

\textbf{Outline.} We now briefly present the contents of each section of this paper. In Section 2, we provide the main notations that are followed throughout this work. Then, we move on to Section 3 by recalling isotropic categories and their main properties, mostly referring to results in \cite{V} and \cite{T}. Since we are mainly interested in cellular objects, we recall in Section 4 definitions and some of the main results from \cite{DI2}, which are useful in the rest of the paper. Section 5 is devoted to a deep analysis of the isotropic motivic Adams spectral sequence, which was already initiated in \cite{T}. These results are used in Section 6 to study the motivic Brown-Peterson spectrum from an isotropic perspective. In particular, we compute its isotropic stable homotopy groups. Sections 7 and 8 are modeled on Sections 3, 4 and 5 of \cite{GWX}. More precisely, in Section 7 we endow the isotropic motivic Brown-Peterson spectrum with an $E_{\infty}$-ring structure and, then, identify as a triangulated category the category of isotropic $\mbp$-cellular spectra with the category of bigraded $\F$-vector spaces. In Section 8, after developing an isotropic Adams-Novikov spectral sequence, we describe the category of isotropic cellular spectra in algebraic terms as the derived category of comodules over the dual of the Steenrod algebra equipped with a $t$-structure. Finally, in Section 9, we provide an algebraic description of the hom-sets in the category of isotropic motives between motives of isotropic cellular spectra, which is a step forward in the understanding of the category of isotropic Tate motives.\\

\textbf{Acknowledgements.} I would like to thank Alexander Vishik for very helpful comments and Dan Isaksen for having pointed out to me the work by Gheorghe-Wang-Xu on which this paper is modeled. I am extremely grateful to Tom Bachmann for very useful remarks. I also wish to thank the referees for very useful comments which helped to improve the exposition and to simplify Section 7.

\section{Notation}

Let us start by fixing some notations we use throughout this paper.\\

\begin{tabular}{c|c}
	$k$ & flexible field with $char(k) \neq 2$\\
	$\SH(k)$ & stable motivic homotopy category over $k$\\
	$\SH(k/k)$ & isotropic stable motivic homotopy category over $k$\\
	$\DM(k)$ & triangulated category of motives with $\Z/2$-coefficients over $k$\\
	$\DM(k/k)$ & isotropic triangulated category of motives with $\Z/2$-coefficients over $k$\\
	$\pi_{**}(-)$ & stable motivic homotopy groups\\
	$\pi^{iso}_{**}(-)$ & isotropic stable motivic homotopy groups\\
	$H_{**}(-),H^{**}(-)$ & motivic homology and cohomology with $\Z/2$-coefficients\\
	$H^{iso}_{**}(-),H_{iso}^{**}(-)$ & isotropic motivic homology and cohomology with $\Z/2$-coefficients\\
	$H_{**}(k),H^{**}(k)$ & motivic homology and cohomology with $\Z/2$-coefficients of $Spec(k)$\\
	$H_{**}(k/k),H^{**}(k/k)$ & isotropic motivic homology and cohomology with $\Z/2$-coefficients of $Spec(k)$\\
	$\A^{**}(k),\A_{**}(k)$ & mod 2 motivic Steenrod algebra and its dual\\
	$\A^{**}(k/k),\A_{**}(k/k)$ & mod 2 isotropic motivic Steenrod algebra and its dual\\
	$\A^{*},\A_{*}$ & mod 2 topological Steenrod algebra and its dual\\
	$\G^{**},\G_{**}$ & bigraded mod 2 topological Steenrod algebra and its dual\\
	& i.e. $\G^{2q,q}=\A^q$, $\G^{p,q}=0$ for $p \neq 2q$ and similarly for the dual\\
	$\M^{**}$ & Milnor subalgebra $\Lambda_{\F}(Q_i)_{i \geq 0}$ of $\A^{**}(k/k)$\\
	& where $Q_i$ are the Milnor operations in bidegrees $(2^i-1)[2^{i+1}-1]$\\
	$\s$ & motivic sphere spectrum\\
	$\hz$ & motivic Eilenberg-MacLane spectrum with $\Z/2$-coefficients\\
	$\mgl$ & motivic algebraic cobordism spectrum\\
	$\mbp$ & motivic Brown-Peterson spectrum at the prime $2$\\
	$\X$ & isotropic sphere spectrum
\end{tabular}\\

We denote hom-sets in $\SH(k)$ by $[-,-]$ and the suspension $\s^{p,q} \wedge X$ of a motivic spectrum $X$ by $\Sigma^{p,q}X$. Moreover, if $E$ is a motivic $E_{\infty}$-ring spectrum, the stable $\infty$-category of $E$-modules (see \cite{Lu}) is denoted by $E-\mo$, its smash product by $- \wedge_E -$ and hom-sets in its homotopy category by $[-,-]_E$.

If $R$ is an algebra and $C$ a coalgebra, then we denote by $R-\mo$ and $C-\com$ the categories of left $R$-modules and left $C$-comodules respectively. Hom-sets in these categories are both denoted by $\Hom _R(-,-)$ and $\Hom _C(-,-)$ and it will be clear from the context if they are meant to be hom of modules or comodules. For a bigraded object $M_{**}$ (respectively $M^{**}$) we also denote by $\Sigma^{p,q}M_{**}$ (respectively $\Sigma^{p,q}M^{**}$) its suspension, i.e. the bigraded object defined by $\Sigma^{p,q}M_{a,b}=M_{a-p,b-q}$ (respectively $\Sigma^{p,q}M^{a,b}=M^{a+p,b+q}$). The convention for bigraded homomorphisms between bigraded objects is the following:
$$\Hom ^{p,q}(M_{**},N_{**})=\Hom ^{0,0}(\Sigma^{p,q}M_{**},N_{**})$$
and
$$\Hom ^{p,q}(M^{**},N^{**})=\Hom ^{0,0}(\Sigma^{p,q}M^{**},N^{**})$$
where $\Hom ^{0,0}(-,-)$ denotes the bidegree preserving homomorphisms. Moreover, the bounded derived categories of $R-\mo$ and $C-\com$ are denoted by $\D^b(R-\mo)$ and $\D^b(C-\com)$ respectively. 

\section{Isotropic motivic categories}

In this section we want to introduce the main categories we consider throughout this paper, namely isotropic motivic categories. These categories are built from the respective motivic ones by, roughly speaking, killing all anisotropic varieties. We refer to \cite[Section 2]{V} and \cite[Section 2]{T} for more details on the construction and properties of isotropic categories.

Let us recall first the definition of flexible field from \cite{V}.

\begin{dfn}
	A field $k$ is called flexible if it is a purely transcendental extension of countable infinite degree, i.e. $k=k_0(t_1,t_2,\dots)$ for some other field $k_0$. 
\end{dfn}

Once and for all we consider a flexible base field $k$ of characteristic different from 2. We proceed by recalling the definition of a fundamental object in $\SH(k)$ for the construction of the isotropic stable motivic homotopy category $\SH(k/k)$. 

\begin{dfn}
	\normalfont
Denote by $Q$ the disjoint union of all connected anisotropic (mod 2) varieties over $k$, i.e. varieties which do not have closed points of odd degree, and by $\check{C}(Q)$ its \v{C}ech simplicial scheme, i.e. $\check{C}(Q)_n=Q^{n+1}$ with face and degeneracy maps given respectively by partial projections and partial diagonals. We define the isotropic sphere spectrum $\X$ as $Cone(\Sigma^{\infty}_+\check{C}(Q) \rightarrow \s)$ in $\SH(k)$ .
\end{dfn} 

We recall from \cite[Section 2]{T} that $\X$ is an idempotent monoid, i.e. there is an equivalence $\X \wedge \X \cong \X$ induced by the map $\s \rightarrow \X$, and so an $E_{\infty}$-ring spectrum (see \cite[Proposition 6.1]{T}).

\begin{dfn}
	\normalfont
The full triangulated subcategory $\X \wedge \SH(k)$ of $\SH(k)$ will be called the isotropic stable motivic homotopy category, and denoted by $\SH(k/k)$.
\end{dfn} 

This triangulated category has very nice properties, in particular it is both localising and colocalising (see \cite[Section 2]{T}). The very same construction was done first for $\DM(k)$ by Vishik in \cite{V}, by tensoring the triangulated category of motives with the idempotent ${\mathrm M}(\X)$, where ${\mathrm M}:\SH(k) \rightarrow \DM(k)$ is the motivic functor. 

\begin{dfn}
	\normalfont
The full triangulated subcategory ${\mathrm M}(\X) \otimes \DM(k)$ of $\DM(k)$ will be called the isotropic category of motives, and denoted by $\DM(k/k)$.
\end{dfn}

The following result tells us that the isotropic stable motivic homotopy category is nothing else but the stable $\infty$-category of $\X$-modules.

\begin{prop}
	The isotropic stable motivic homotopy category $\SH(k/k)$ is equivalent to the stable $\infty$-category $\X-\mo$ of modules over the motivic $E_{\infty}$-ring spectrum $\X$.
\end{prop}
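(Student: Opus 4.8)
The plan is to exhibit $\SH(k/k) = \X \wedge \SH(k)$ as the category of modules over $\X$ by appealing to the general machinery relating smashing localizations to module categories over idempotent $E_\infty$-rings. First I would recall that, by the discussion preceding the statement, the unit map $\s \to \X$ exhibits $\X$ as an idempotent $E_\infty$-ring spectrum, meaning the multiplication $\X \wedge \X \to \X$ is an equivalence. In this situation one has the standard fact (see Lurie, \cite{Lu}) that the free-module functor $X \mapsto \X \wedge X \colon \SH(k) \to \X\text{-}\mo$ is a localization, i.e. it is a left adjoint whose right adjoint (the forgetful functor) is fully faithful, and that the essential image of this right adjoint — equivalently, the full subcategory of $\SH(k)$ on which the unit map becomes an equivalence after smashing — is precisely the localizing subcategory $\X \wedge \SH(k) = \SH(k/k)$. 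Concretely, for an idempotent algebra the forgetful functor $\X\text{-}\mo \to \SH(k)$ is fully faithful with image the $\X$-local objects, and an object is $\X$-local iff it lies in $\X\wedge\SH(k)$.

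The key steps, in order, are: (1) record that $\X$ is an idempotent $E_\infty$-ring, so that the pair (free module, forgetful) restricts to an equivalence between $\X\text{-}\mo$ and the full subcategory of $\SH(k)$ of objects $Y$ for which $\X \wedge Y \to Y$ — or rather the unit $Y \to \X \wedge Y$ adjoint to multiplication — is an equivalence; this is the idempotent-algebra form of the Eilenberg--Moore/Barr--Beck comparison, valid because the two-sided bar construction computing $\X\text{-}\mo$ collapses when $\X\wedge\X \simeq \X$. (2) Identify this full subcategory with $\SH(k/k)$: on one hand every $\X$-module $M$ satisfies $\X \wedge M \simeq M$, so lies in $\X \wedge \SH(k) = \SH(k/k)$; on the other hand, for any $X \in \SH(k)$ the object $\X \wedge X$ carries a canonical $\X$-module structure (it is the free module on $X$), so every object of $\SH(k/k)$ underlies an $\X$-module and the unit $\X\wedge X \to \X\wedge\X\wedge X \simeq \X \wedge X$ is an equivalence. (3) Check compatibility of the two triangulated/$\infty$-categorical structures: the smash product and triangles in $\SH(k/k)$, inherited from $\SH(k)$, match those of $\X\text{-}\mo$, which is immediate since both are computed in $\SH(k)$ and the relevant functors are exact and symmetric monoidal.

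I expect the main subtlety — not so much an obstacle as the point requiring care — to be step (1): making precise that the idempotence $\X \wedge \X \simeq \X$ (as established in \cite[Section 2]{T} and used to build the $E_\infty$-structure in \cite[Proposition 6.1]{T}) forces the forgetful functor from $\X\text{-}\mo$ to be fully faithful, so that $\X\text{-}\mo$ is a \emph{reflective} (smashing) subcategory of $\SH(k)$ rather than something larger. This is where one genuinely uses that $\X$ is idempotent and not merely an arbitrary $E_\infty$-algebra; the cleanest argument is to invoke Lurie's characterization of idempotent objects in a symmetric monoidal $\infty$-category and the resulting description of modules over them as the associated localization (\cite[Section 4.8.2]{Lu}), which identifies $\X\text{-}\mo$ with the full subcategory of $\X$-local objects and that subcategory with $\X \wedge \SH(k)$. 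Granting this, the $t$-structure and monoidal comparisons are formal, and the proposition follows.
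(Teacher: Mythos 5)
Your proposal is correct and takes essentially the same route as the paper: the paper's proof is a one‑line citation of \cite[Proposition 4.8.2.10]{Lu}, precisely the result on idempotent $E_\infty$‑algebras and smashing localizations from Section 4.8.2 that you invoke. You have simply unwound what that proposition says and why the idempotence $\X \wedge \X \simeq \X$ from \cite{T} is the hypothesis that makes it apply.
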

\begin{proof}
	It follows immediately from \cite[Proposition 4.8.2.10]{Lu}.
\end{proof}

\begin{rem}\label{bach}
	\normalfont
Since by construction $\X$ kills all anisotropic varieties, it kills in particular non-trivial quadratic extensions. Consider an element $x$ in $k$ such that neither $x$ nor $-x$ is a square. Then, we have that $\X \wedge \Sigma^{\infty}_+Spec(k(\sqrt{x}))$ and $\X \wedge \Sigma^{\infty}_+Spec(k(\sqrt{-x}))$ are both zero. This implies that the Euler characteristics of $Spec(k(\sqrt{x}))$ and $Spec(k(\sqrt{-x}))$, which are respectively equal to $\langle 2 \rangle (1 + \langle x\rangle)$ and $\langle 2 \rangle (1 + \langle -x\rangle)$ in $\pi_{0,0}(\s) \cong {\mathrm GW}(k)$ (see \cite[Corollary 11.2]{Le} and \cite[Theorem 6.2.2]{Mo}), vanish in $\pi_{0,0}(\X)$. It follows that $1 + \langle x\rangle$ and $1 + \langle -x\rangle$ vanish in $\pi_{0,0}(\X)$ and so does their sum
$$2 + \langle x\rangle + \langle -x\rangle= 2 + \langle 1\rangle + \langle -1\rangle=3 +\langle -1\rangle.$$
Hence, we have that $-3=\langle -1\rangle$, and so $9=1$, i.e. $8=0$ in $\pi_{0,0}(\X)$. From all this one deduces that $\X$ is $2$-power torsion.\footnote{I am grateful to Tom Bachmann for this argument.}
\end{rem}

We are now ready to define isotropic motivic homotopy groups and isotropic motivic homology and cohomology. 

\begin{dfn}
	\normalfont
Let $X$ be a motivic spectrum in $\SH(k)$. Then, the isotropic stable motivic homotopy groups of $X$ are defined by
$$\pi_{**}^{iso}(X)=[\s^{**},\X \wedge X]=\pi_{**}(\X \wedge X).$$
\end{dfn}

Recall that motivic cohomology with $\Z/2$-coefficients is represented by the motivic Eilenberg-MacLane spectrum $\hz$. Then, we define isotropic motivic cohomology as the cohomology theory represented by the motivic $E_{\infty}$-ring spectrum $\X \wedge \hz$. 

\begin{dfn}
	\normalfont
For any $X$ in $\SH(k)$, we define the isotropic motivic cohomology of $X$ as
$$H^{**}_{iso}(X)=[X,\Sigma^{**}(\X \wedge \hz)]$$
and the isotropic motivic homology of $X$ as
$$H_{**}^{iso}(X)=[\s^{**},\X \wedge \hz \wedge X] = H_{**}(\X \wedge X).$$
\end{dfn}

The isotropic motivic cohomology of the point was computed by Vishik in \cite{V}. We report the result in the next theorem.

\begin{thm}\label{vis}
	Let $k$ be a flexible field. Then, for any $i \geq 0$ there exists a unique cohomology class $r_i$ of bidegree $(-2^i+1)[-2^{i+1}+1]$ such that
	$$H^{**}(k/k) \cong \Lambda_{\F}(r_i)_{i \geq 0}$$
	and $Q_j r_i=\delta_{ij}$, where $Q_j$ are the Milnor operations.
\end{thm}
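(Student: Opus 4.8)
The result is Vishik's \cite{V}; here is the line of attack I would take. One starts from the defining cofiber sequence $\Sigma^{\infty}_+\check{C}(Q) \to \s \to \X$: smashing it with $\hz$ and passing to motivic homotopy groups yields a long exact sequence relating $H^{**}(k/k)$, the motivic cohomology $H^{**}(k)$ of the base field, and the motivic (co)homology of the \v{C}ech simplicial scheme $\check{C}(Q)$. So the whole statement reduces to a sufficiently precise understanding of the motivic (co)homology of $\check{C}(Q)$ --- the \v{C}ech scheme of the ``universal'' anisotropic variety --- together with the augmentation maps to and from the point.

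The second, and main, step is to get a grip on $\check{C}(Q)$. One first reduces the analysis to the \v{C}ech schemes of anisotropic Pfister (norm) quadrics $Q_\alpha$, which generate the relevant localizing subcategory by the norm residue isomorphism, and then to the study of how they assemble. It is here that flexibility of $k$ is indispensable: it ensures that the system of anisotropic Pfister quadrics is large enough that no ``small-field'' pathology interferes and that the colimit over all anisotropic varieties stabilizes to the expected answer --- over an algebraically closed field, for instance, there is nothing to kill and the conclusion fails. One then invokes Voevodsky's analysis of the motivic cohomology of \v{C}ech schemes of norm quadrics from the proof of the Milnor conjecture: $H^{**}(\check{C}(Q_\alpha))$ is governed by the Rost motive of $\alpha$, the augmentation on $H_{0,0}$ is trivial (an anisotropic quadric carries only $0$-cycles of even degree --- already the source of the unit of $H^{**}(k/k)$), and, crucially, the Milnor operation $Q_n$ detects the Rost class of an $(n+1)$-fold symbol. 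Assembling this over all symbols and all $n$, the connecting maps of the long exact sequence produce exactly the classes $r_i$ in bidegree $(-2^i+1)[-2^{i+1}+1]$, dual to the $Q_i$, and the relations $Q_j r_i = \delta_{ij}$ are Voevodsky's detection statement transported through the long exact sequence.

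Finally, for the multiplicative structure: the monomials $r_{i_1}\cdots r_{i_k}$ all land in strictly negative cohomological degree, so a bidegree count against the now-known motivic homology of $\check{C}(Q)$ should show that they are linearly independent and exhaust $H^{**}(k/k)$, while each $r_i^2$ falls in a bidegree where $H^{**}(k/k)$ vanishes, which forces the exterior relation. The hard part will be the second step --- taming the colimit over all anisotropic varieties and proving that precisely $\Lambda_{\F}(r_i)_{i \geq 0}$ shows up, with nothing extra. That is the technical heart of \cite{V}, and precisely where the flexibility of $k$ is used.
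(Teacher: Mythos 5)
The paper does not actually prove this statement: its entire ``proof'' is the citation \cite[Theorem 3.7]{V}, so there is no in-text argument against which to check yours. That said, your sketch is a plausible reconstruction of the ideas in Vishik's proof --- the defining cofiber sequence for $\X$ and the resulting long exact sequence, the \v{C}ech simplicial scheme $\check{C}(Q)$ of the universal anisotropic variety, Voevodsky's computations of the motivic cohomology of \v{C}ech schemes of norm quadrics via Rost motives and the detection of the Rost classes by Milnor operations, and the indispensability of flexibility for taming the colimit over all anisotropic varieties. Your closing bidegree count for the exterior relation does work: writing out the bidegree of $r_i^2$ and solving for square-free monomials $r_{j_1}\cdots r_{j_m}$ in that bidegree forces $m=2$ and $2^{j_1}+2^{j_2}=2^{i+1}$ with $j_1<j_2$, which has no solution, so $r_i^2$ lands in an empty bidegree once the additive spanning is known. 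One phrase to be cautious about: the reduction to Pfister quadrics is a cofinality statement in the colimit over anisotropic varieties (flexibility supplies, for every $n$, a generic anisotropic $n$-fold Pfister quadric), not a ``generation of a localizing subcategory via the norm residue isomorphism''; the norm residue theorem enters only indirectly, through the structure it imposes on the \v{C}ech schemes of the relevant norm quadrics. Since the paper contributes nothing here beyond the citation, your proposal cannot be said to diverge from it; it fills in what the citation points to, with the one imprecision noted.
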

\begin{proof}
	See \cite[Theorem 3.7]{V}. 
\end{proof}

At this point, we want to introduce the isotropic motivic Steenrod algebra $\A^{**}(k/k)$ and its dual $\A_{**}(k/k)$. They are defined respectively as the isotropic motivic cohomology and homology of the motivic Eilenberg-MacLane spectrum. 

\begin{dfn}
\normalfont
The isotropic motivic Steenrod algebra is defined by
$$\A^{**}(k/k)=H^{**}_{iso}(\hz)=[\hz,\Sigma^{**}(\X \wedge \hz)] \cong [\X \wedge \hz,\Sigma^{**}(\X \wedge \hz)]$$
and its dual by
$$\A_{**}(k/k)=H_{**}^{iso}(\hz)=[\s^{**},\X \wedge \hz \wedge \hz].$$
\end{dfn}

The structure of $\A^{**}(k/k)$ was studied in \cite[Section 3]{T}. We summarise the main results in the next proposition.

\begin{prop}\label{st}
	Let $k$ be a flexible field. Then, there exists an isomorphism of $H^{**}(k/k) - \M^{**}$-bimodules
	$$\A^{**}(k/k) \cong H^{**}(k/k) \otimes_{\F} \G^{**} \otimes_{\F} \M^{**}$$
		where $\M^{**}$ is the Milnor subalgebra $\Lambda_{\F}(Q_i)_{i \geq 0}$ and $\G^{**}$ is the bigraded topological Steenrod algebra, i.e. $\G^{2n,n}=\A^n$.
\end{prop}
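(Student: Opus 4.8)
The strategy is to first describe the dual isotropic Steenrod algebra $\A_{**}(k/k)=H^{iso}_{**}(\hz)=\pi_{**}(\X\wedge\hz\wedge\hz)$ by base change from the mod $2$ motivic dual Steenrod algebra, and then to dualise back. Recall Voevodsky's computation (available over our field, since $char(k)\neq2$): there is an isomorphism of $H_{**}(k)$-algebras
$$\A_{**}(k)\cong H_{**}(k)[\xi_1,\xi_2,\dots,\tau_0,\tau_1,\dots]/(\tau_i^2+\tau\xi_{i+1}+\rho\tau_{i+1}+\rho\tau_0\xi_{i+1}),$$
where $\rho=[-1]\in H^{1,1}(k)$ and $\tau\in H^{0,1}(k)$, the class $\xi_i$ has bidegree $(2^{i+1}-2,2^i-1)$, and $\tau_i$ has bidegree $(2^{i+1}-1,2^i-1)$. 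In particular $\A_{**}(k)$ is free as a left $H_{**}(k)$-module on the monomials $\tau_0^{\epsilon_0}\tau_1^{\epsilon_1}\cdots\xi_1^{s_1}\xi_2^{s_2}\cdots$ with $\epsilon_i\in\{0,1\}$ and $s_j\geq0$; this basis is bounded below and has finitely many elements in each bidegree.

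First I would use this freeness, together with cellularity of $\hz$ (see, e.g., \cite{DI2}), to split $\hz\wedge\hz\simeq\bigvee_m\Sigma^{|m|}\hz$ as $\hz$-modules indexed by the above monomial basis: one picks lifts of the basis elements in $\pi_{**}(\hz\wedge\hz)$, forms the induced map of cellular $\hz$-modules, and observes that it is a $\pi_{**}$-isomorphism, hence an equivalence. Smashing with the idempotent ring spectrum $\X$ gives $\X\wedge\hz\wedge\hz\simeq\bigvee_m\Sigma^{|m|}(\X\wedge\hz)$, whence
$$\A_{**}(k/k)\cong\A_{**}(k)\otimes_{H_{**}(k)}H_{**}(k/k)$$
as $H_{**}(k/k)$-algebras (equivalently, this follows from the collapse at $E_2$ of the Künneth spectral sequence, the product being inherited from that of $\hz\wedge\hz$). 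Now by Theorem \ref{vis} the ring $H^{**}(k/k)$ is concentrated in non-positive weights, so the weight-one classes $\tau$ and $\rho=[-1]$ vanish in $H^{**}(k/k)$ and hence act by zero on $H_{**}(k/k)$; consequently the base-changed relation degenerates to $\tau_i^2=0$, and
$$\A_{**}(k/k)\cong H_{**}(k/k)\otimes_{\F}\F[\xi_1,\xi_2,\dots]\otimes_{\F}\Lambda_{\F}(\tau_0,\tau_1,\dots).$$
By Milnor's theorem $\F[\xi_i]_{i\geq1}$ is the classical dual Steenrod algebra $\A_*$, which (via the above bidegrees, $\xi_i$ having classical degree $2^i-1$) sits in exactly the bidegrees of $\G_{**}$, while $\Lambda_{\F}(\tau_i)_{i\geq0}$ is the dual $\M_{**}$ of the Milnor subalgebra $\M^{**}=\Lambda_{\F}(Q_i)$, the bidegrees of the $\tau_i$ matching those of the $Q_i$. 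Thus $\A_{**}(k/k)\cong H_{**}(k/k)\otimes_{\F}\G_{**}\otimes_{\F}\M_{**}$.

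It then remains to dualise. Since $\A_{**}(k/k)$ is a free $H_{**}(k/k)$-module and $H_{**}(k/k)$ (an exterior algebra on generators whose degrees tend to infinity, by Theorem \ref{vis}), $\G_{**}$ and $\M_{**}$ are each locally finite, the universal coefficient spectral sequence computing $\A^{**}(k/k)=H^{**}_{iso}(\hz)$ from $\A_{**}(k/k)$ collapses with no $\Tor$-term, identifying $\A^{**}(k/k)$ with the $H^{**}(k/k)$-linear dual of $\A_{**}(k/k)$; local finiteness then allows one to identify the dual of the tensor product with the tensor product of the duals, giving $\A^{**}(k/k)\cong H^{**}(k/k)\otimes_{\F}\G^{**}\otimes_{\F}\M^{**}$. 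Finally I would check that this respects the $H^{**}(k/k)$–$\M^{**}$-bimodule structure: the left $H^{**}(k/k)$-action is visible on the first factor, while the right $\M^{**}$-action corresponds, via the coalgebra surjection $\A_{**}(k/k)\twoheadrightarrow\M_{**}$ dual to the inclusion of the sub-bialgebra $\M^{**}\hookrightarrow\A^{**}(k/k)$ (legitimate since the $Q_i$ are primitive), to the regular action on the $\M^{**}$-factor, so that $\A^{**}(k/k)$ is free as a right $\M^{**}$-module on $H^{**}(k/k)\otimes_{\F}\G^{**}$. The main obstacle is not the computation of the underlying bigraded $\F$-vector space — that reduces to Voevodsky's theorem plus the elementary fact that $\tau$ and $[-1]$ die isotropically — but the careful bookkeeping needed to see that the isomorphisms above are compatible with the algebra structure through the base change and with the full bimodule structure through the dualisation, together with the local-finiteness hypotheses making the universal coefficient argument and the passage to duals valid.
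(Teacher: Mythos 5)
Your proposal is mathematically sound, but it travels in the opposite direction from the paper's (cited) argument. You start from Voevodsky's presentation of the \emph{dual} motivic Steenrod algebra $\A_{**}(k)$, base-change along $H_{**}(k)\to H_{**}(k/k)$, observe that the relations $\tau_i^2=\tau\xi_{i+1}+\rho\tau_{i+1}+\rho\tau_0\xi_{i+1}$ collapse to $\tau_i^2=0$ once the weight-one classes $\tau$ and $\rho$ die isotropically (which is indeed forced by Theorem \ref{vis}, since $H^{**}(k/k)$ lives in non-positive weights), and then dualize back to cohomology. The paper, by contrast, takes the cohomological statement about $\A^{**}(k/k)$ as the primitive fact (citing Propositions 3.5--3.7 of \cite{T}) and \emph{afterwards} obtains the presentation of $\A_{**}(k/k)\cong H_{**}(k/k)[\tau_i,\xi_j]/(\tau_i^2)$ by dualizing via the Cartan formulas. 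Your route buys a cleaner starting point -- $\A_{**}(k)$ has a transparent polynomial-over-exterior presentation, and the only input about the isotropic point is the single sentence ``$\tau,\rho\mapsto 0$'' -- at the price of a delicate dualization at the end.

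Two spots deserve more care than the proposal gives them. First, the passage from the free $H_{**}(k/k)$-module $\A_{**}(k/k)$ to its $H_{**}(k/k)$-linear dual produces a \emph{product} of shifted copies of $H^{**}(k/k)$, and since $H^{**}(k/k)=\Lambda_{\F}(r_i)_{i\geq 0}$ is infinitely generated, ``local finiteness'' of the basis is not by itself enough; what is actually needed is that for each target bidegree only finitely many shifts contribute, which is precisely the ``isotropically finite type'' condition the paper isolates (cf.\ Lemma \ref{ift}). It does hold here because the bidegrees of the monomial basis of $\F[\xi_i]\otimes\Lambda(\tau_i)$ all satisfy $p_{\alpha}\geq 2q_{\alpha}\geq 0$ while $H^{**}(k/k)$ is concentrated strictly below the slope-$2$ line, but you should say so explicitly rather than appeal to local finiteness alone. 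Second, for the bimodule claim you assert that $\A^{**}(k/k)$ is free as a right $\M^{**}$-module on $H^{**}(k/k)\otimes_{\F}\G^{**}$ via duality; the cleanest way to make this honest is to note that your computation shows $\A_{**}(k/k)$ is an \emph{extended} (cofree) right $\M_{**}$-comodule on $H_{**}(k/k)\otimes_{\F}\F[\xi_i]$, and that (again using isotropic finite type) the $H_{**}(k/k)$-linear dual of an extended comodule is a free module. With those two points made precise, the argument goes through.
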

\begin{proof}
See \cite[Propositions 3.5, 3.6 and 3.7]{T}. 
\end{proof}

By projecting the motivic Cartan formulas (see \cite[Propositions 9.7 and 13.4]{V2}) to the isotropic category one gets a coproduct on $\A^{**}(k/k)$ given by:
$$\Delta(Sq^{2n})=\sum_{i+j=n}Sq^{2i}\otimes Sq^{2j};$$
$$\Delta(Q_i)=Q_i\otimes 1 + 1\otimes Q_i.$$
This coproduct structures $\A^{**}(k/k)$ as a coalgebra whose dual is described as an $H_{**}(k/k)$-algebra by
$$\A_{**}(k/k) \cong \frac {H_{**}(k/k)[\tau_i,\xi_j]_{i \geq 0,j \geq 1}} {(\tau_i^2)}$$
where $\tau_i$ is the dual of the Milnor operation $Q_i$ and $\xi_j$ is the dual of the motivic cohomology operation $Sq^{2^j}\cdots Sq^2$. The coproduct in $\A_{**}(k/k)$ is given by (see \cite[Lemma 12.11]{V2}):
$$\psi(\xi_k)=\sum_{i=0}^k \xi_{k-i}^{2^i} \otimes \xi_i;$$
$$\psi(\tau_k)=\sum_{i=0}^k \xi_{k-i}^{2^i} \otimes \tau_i+\tau_k\otimes 1.$$

\begin{rem}
	\normalfont
By Proposition \ref{st}, the projection from $\A^{**}(k/k)$ to its quotient by the left ideal generated by Milnor operations provides a homomorphism
$$\A^{**}(k/k) \rightarrow H^{**}(k/k) \otimes_{\F} \G^{**}.$$
This map induces a left $\A^{**}(k/k)$-action on $H^{**}(k/k) \otimes_{\F} \G^{**}$ and, dually, a left $\A_{**}(k/k)$-coaction on $H_{**}(k/k) \otimes_{\F} \G_{**}$, where $\G_{**}$ is the subalgebra $\F[\xi_1,\xi_2,\dots]$. 
\end{rem}

\section{Cellular motivic spectra}

In this work, we are mostly interested in cellular objects of isotropic motivic categories. We recall from \cite[Remark 7.4]{DI2} that the category of cellular motivic spectra, which we denote by $\SH(k)_{cell}$, is the localising subcategory of $\SH(k)$ generated by the spheres $\Sigma^{p,q}\s$. Similarly, the category of Tate motives, which we denote by $\DM(k)_{Tate}$, is the localising subcategory of $\DM(k)$ generated by the Tate motives $T(q)[p]$. If $E$ is a motivic $E_{\infty}$-ring spectrum, then we denote by $E-\mo_{cell}$ the stable $\infty$-category of $E$-cellular modules, i.e. the localising subcategory of $E-\mo$ generated by $\Sigma^{p,q}E$. 

\begin{dfn}
	\normalfont
	The category of $\X$-cellular modules will be called the category of isotropic cellular motivic spectra, and denoted by $\SH(k/k)_{cell}$. In the same way, the full localising subcategory of $\DM(k/k)$ generated by the objects ${\mathrm M}(\X)(q)[p]$ will be called the category of isotropic Tate motives, and denoted by $\DM(k/k)_{Tate}$.  
\end{dfn}

A fundamental property of the category of cellular objects is that isomorphisms can be detected by motivic homotopy groups, as reported in the following result.

\begin{prop}\label{check}
Let $E$ be a motivic $E_{\infty}$-ring spectrum and  $X \rightarrow Y$ be a map of $E$-cellular motivic spectra that induces isomorphisms on $\pi_{p,q}$ for all $p $ and $q$ in $\Z$. Then, the map is a weak equivalence.
\end{prop}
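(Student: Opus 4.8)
The plan is to reduce the statement to the standard fact that a localising subcategory is detected by a set of compact generators, together with the observation that in a stable $\infty$-category the property of being an equivalence is equivalent to having a zero cofiber. First I would form the cofiber $Z$ of the map $f\colon X\to Y$ in $E\text{-}\mo$. Since $X$ and $Y$ are $E$-cellular, so is $Z$: the category of $E$-cellular modules is by definition the localising subcategory of $E\text{-}\mo$ generated by the spheres $\Sigma^{p,q}E$, hence it is closed under cofibers. The hypothesis that $f$ induces isomorphisms on all $\pi_{p,q}$ translates, via the long exact sequence of homotopy groups attached to the cofiber sequence $X\to Y\to Z$, into the vanishing $\pi_{p,q}(Z)=0$ for all $p,q\in\Z$. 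Thus it suffices to prove: an $E$-cellular module $Z$ with $\pi_{**}(Z)=0$ is the zero object; equivalently, $f$ is then an equivalence because its cofiber is zero.

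For this last reduction I would argue as follows. The spheres $\Sigma^{p,q}E$ are compact in $E\text{-}\mo$ (they corepresent the homotopy-group functors, which commute with filtered colimits), and $\pi_{p,q}(Z)=[\Sigma^{p,q}E,Z]_E$. Consider the full subcategory of $E\text{-}\mo$ on those objects $W$ for which $[\Sigma^{p,q}W,Z]_E=0$ for all $p,q$; since $Z$ is fixed, this subcategory is closed under (de)suspensions, cofibers, retracts and arbitrary coproducts, i.e. it is localising, and by hypothesis it contains every $\Sigma^{p,q}E$. Hence it contains the whole localising subcategory they generate, namely $E\text{-}\mo_{cell}$. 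In particular it contains $Z$ itself, so $[\,\mathrm{id}_Z\,]\in[Z,Z]_E=0$, forcing $Z\simeq 0$. Therefore the cofiber of $f$ vanishes and $f$ is a weak equivalence.

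The one point that needs a word of care — and the step I expect to be the main (mild) obstacle — is the compactness of $\Sigma^{p,q}E$ in $E\text{-}\mo$, equivalently the fact that $\pi_{**}$ commutes with the coproducts appearing in the cellular tower. This is where the restriction to $E$-\emph{cellular} modules is essential: for a general $E$-module the homotopy groups need not detect equivalences, but on the localising subcategory generated by a set of compact objects the detection statement is automatic. In the present setting $E$ ranges over motivic $E_\infty$-ring spectra and compactness of $\Sigma^{p,q}E$ in $E\text{-}\mo$ follows from the corresponding fact in $\SH(k)$ (the motivic spheres $\Sigma^{p,q}\s$ are compact, hence so are their images under the left adjoint $-\wedge E$) combined with the description of $E\text{-}\mo$ from \cite{Lu}. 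Once compactness is in hand, everything else is the formal localising-subcategory argument sketched above, and no computation is required.
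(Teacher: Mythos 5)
Your proof is correct and is, in essence, the standard localising–subcategory argument that underlies the cited result \cite[Corollary~7.2 and Section~7.9]{DI2}; the paper itself gives no independent proof but simply refers there, so your write-up is a faithful reconstruction of the expected argument (form the cofiber $Z$, observe $\pi_{**}(Z)=0$, and note that $\{W : [\Sigma^{p,q}W,Z]_E=0 \text{ for all }p,q\}$ is localising and contains the generators $\Sigma^{p,q}E$, hence all of $E$-$\mo_{cell}$, in particular $Z$ itself, forcing $\mathrm{id}_Z=0$).

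One small remark on your closing paragraph: compactness of $\Sigma^{p,q}E$ is not actually needed for the argument you give. The orthogonal complement $\mathcal{C}=\{W : [\Sigma^{p,q}W,Z]_E=0\ \forall p,q\}$ is automatically closed under (de)suspensions, cofibers, retracts, and arbitrary coproducts (the last because $[\bigvee_\alpha W_\alpha,Z]_E \cong \prod_\alpha[W_\alpha,Z]_E$), and this is all that is used. Compactness of the spheres is what you would invoke to show that $\pi_{**}$ detects equivalences on \emph{all} of $E$-$\mo$, not just on the cellular subcategory; restricting to cellular objects is precisely what lets you sidestep it, since $Z$ itself lies in the localising subcategory generated by the $\Sigma^{p,q}E$ and you can run the argument ``from the other side.'' So your instinct that cellularity is essential is right, but the role it plays is to put $Z$ inside the generated subcategory, not to make the spheres compact.
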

\begin{proof}
See \cite[Corollary 7.2 and Section 7.9]{DI2}. 
\end{proof}

Another essential advantage of dealing with cellular objects is that they allow the construction of very useful convergent spectral sequences.

\begin{prop}\label{ss}
	Let $E$ be a motivic $E_{\infty}$-ring spectrum and $N$ a left $E$-module. If $M$ is a right $E$-cellular spectrum, then there is a strongly convergent spectral sequence
	$$E^2_{s,t,u} \cong \Tor^{\pi_{**}(E)}_{s,t,u}(\pi_{**}(M),\pi_{**}(N))\Longrightarrow \pi_{s+t,u}(M \wedge_E N).$$
	If $M$ is a left $E$-cellular motivic spectrum, then there is a conditionally convergent spectral sequence
	$$E_2^{s,t,u} \cong \Ext_{\pi_{**}(E)}^{s,t,u}(\pi_{**}(M),\pi_{**}(N)) \Longrightarrow [\Sigma^{t-s,u}M,N]_E.$$
\end{prop}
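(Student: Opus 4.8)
The plan is to adapt to $\SH(k)$ the classical construction of the universal-coefficient and Künneth spectral sequences for modules over a commutative ring spectrum: realise a free resolution of the bigraded $\pi_{**}(E)$-module $\pi_{**}(M)$ by a tower of free $E$-modules, and then pass to the spectral sequence of that tower after applying $-\wedge_E N$, respectively $[-,N]_E$. Beyond the formal properties of the presentable stable symmetric monoidal $\infty$-category $E-\mo$ recorded in \cite{Lu}, the only genuinely motivic input is the good behaviour of cellular objects from \cite{DI2}.

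First I would choose a free resolution $\cdots\to F_1\xrightarrow{\partial_1}F_0\xrightarrow{\epsilon}\pi_{**}(M)\to 0$ of $\pi_{**}(E)$-modules, with each $F_s=\bigoplus_\alpha\Sigma^{p_{s,\alpha},q_{s,\alpha}}\pi_{**}(E)$ free. Since the motivic spheres $\s^{p,q}$ are compact, the wedge $P_s:=\bigvee_\alpha\Sigma^{p_{s,\alpha},q_{s,\alpha}}E$ is a free, hence cellular, $E$-module with $\pi_{**}(P_s)\cong F_s$, and free $E$-modules are homotopy-projective: as $\Sigma^{p,q}E$ corepresents $\pi_{p,q}$ on $E-\mo$ and maps out of a wedge form a product, the natural map $[P_s,L]_E\to\Hom_{\pi_{**}(E)}(\pi_{**}(P_s),\pi_{**}(L))$ is an isomorphism for every $E$-module $L$. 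This allows me to lift $\epsilon$ and the $\partial_s$ inductively to maps of $E$-modules with null-homotopic consecutive composites and to splice the resulting cofibre sequences into an Adams-type resolution, that is, a tower $M=M_0\leftarrow M_1\leftarrow M_2\leftarrow\cdots$ of $E$-modules whose successive cofibres are, up to suspension, the free modules $P_s$, and whose associated exact couple realises $F_\bullet\to\pi_{**}(M)$ on homotopy. It is precisely here that the cellularity of $M$ enters: by Proposition \ref{check}, maps between cellular $E$-modules are detected on $\pi_{**}$, so the algebraically chosen resolution genuinely reconstructs $M$ rather than merely its homotopy groups.

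For the homology statement I would smash the entire tower with the left $E$-module $N$ over $E$ and pass to homotopy. Since a free $E$-module is homotopy-flat ($\Sigma^{p,q}E\wedge_E N\simeq\Sigma^{p,q}N$ and $-\wedge_E N$ preserves wedges), the resulting spectral sequence has $E^1_{s,*,*}\cong\pi_{**}(P_s\wedge_E N)\cong F_s\otimes_{\pi_{**}(E)}\pi_{**}(N)$ with $d^1=\partial_\bullet\otimes 1$, whence $E^2\cong\Tor^{\pi_{**}(E)}_{s,t,u}(\pi_{**}(M),\pi_{**}(N))$, abutting to $\pi_{**}(M\wedge_E N)$. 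Dually, applying $[-,N]_E$ to the tower and invoking homotopy-projectivity gives $E_1^{s,*,*}\cong\Hom_{\pi_{**}(E)}(F_s,\pi_{**}(N))$ with $d_1=\Hom(\partial_\bullet,1)$, so that $E_2\cong\Ext_{\pi_{**}(E)}^{s,t,u}(\pi_{**}(M),\pi_{**}(N))$, abutting to $[\Sigma^{**}M,N]_E$; the precise indexing in the statement then results from tracking the suspensions occurring in the tower.

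The main obstacle is convergence, and this is where the bigraded structure of $\SH(k)$ has to be handled with care rather than appealed to naively. The Künneth spectral sequence is a half-plane spectral sequence with entering differentials, so it converges strongly once one knows that the tower filtration on $M\wedge_E N$ is exhaustive and Mittag-Leffler; for cellular $M$ both are guaranteed by Proposition \ref{check} together with the analysis of cellular towers in \cite[Section 7]{DI2}, which is what replaces the classical connectivity bookkeeping. The universal-coefficient spectral sequence, by contrast, arises from an inverse system of mapping spectra, so a $\lim^1$-term genuinely obstructs strong convergence and only conditional convergence in Boardman's sense can be asserted — exactly as the statement claims. The remaining verifications, namely the existence of the successive lifts and the compatibility of the $E_2$-identifications with the differentials of the tower, are routine diagram chases in $E-\mo$.
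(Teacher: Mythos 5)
Your proposal reproduces the argument behind \cite[Propositions 7.7 and 7.10]{DI2}, which is all the paper's proof consists of (it simply cites those two results). Realizing a free $\pi_{**}(E)$-resolution by wedges of suspensions of $E$, using compactness of the motivic spheres and Proposition~\ref{check} to identify the resulting cellular realization with $M$, and then applying $-\wedge_E N$ resp.\ $[-,N]_E$ to the associated filtration is exactly the Dugger--Isaksen (and underlying EKMM) construction; one cosmetic remark is that the Tor spectral sequence is most transparently set up via the \emph{increasing} cellular filtration $Y_0 \to Y_1 \to \cdots$ with $\hc_s Y_s \simeq M$, so that exhaustiveness of the filtration on $M \wedge_E N$ follows from $-\wedge_E N$ preserving colimits, whereas the decreasing ``Adams-type'' tower you write down sits a bit awkwardly next to your appeal to exhaustiveness, even though the two presentations can be reconciled.
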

\begin{proof}
	See \cite[Propositions 7.7 and 7.10]{DI2}. 
\end{proof}

\section{The isotropic motivic Adams spectral sequence}

In this section we recall the construction of the isotropic motivic Adams spectral sequence (see \cite[Section 4]{T}). Moreover, we study the circumstances under which the $E_2$-page is expressible in terms of $\Ext$-groups over the isotropic motivic Steenrod algebra.

\begin{dfn}
	\normalfont
Let $Y$ be an isotropic motivic spectrum, i.e. an object in $\X-\mo$. Then, the standard isotropic motivic Adams resolution of $Y$ consists of the Postnikov system
$$
\xymatrix{
	\dots \ar@{->}[r] &  (\overline{\X \wedge \hz})^{\wedge s} \wedge Y \ar@{->}[r] \ar@{->}[d] &  \dots \ar@{->}[r]   & \overline{\X \wedge \hz} \wedge Y \ar@{->}[r] \ar@{->}[d]	 & Y \ar@{->}[d]  \\
	&	\X \wedge \hz\wedge (\overline{\X \wedge \hz})^{\wedge s} \wedge Y \ar@{->}[ul]^{[1]} & & \X \wedge \hz \wedge \overline{\X \wedge \hz} \wedge Y \ar@{->}[ul]^{[1]}  &	\X \wedge \hz \wedge Y \ar@{->}[ul]^{[1]} 
}
$$
where $\overline{\X \wedge \hz}$ is defined by the following exact triangle in $\SH(k)$:
$$\overline{\X \wedge \hz} \rightarrow \s \rightarrow \X \wedge \hz \rightarrow \Sigma^{1,0}\overline{\X \wedge \hz}.$$
By applying motivic homotopy groups functors $\pi_{**}$ to the previous Postnikov system we get an unrolled exact couple, which induces in turn a spectral sequence with $E_1$-page described by
$$E_1^{s,t,u}\cong \pi_{t-s,u}(\X \wedge \hz\wedge (\overline{\X \wedge \hz})^{\wedge s} \wedge Y)$$
and first differential
$$d_1^{s,t,u}:\pi_{t-s,u}(\X \wedge \hz\wedge (\overline{\X \wedge \hz})^{\wedge s} \wedge Y) \rightarrow \pi_{t-s-1,u}(\X \wedge \hz\wedge (\overline{\X \wedge \hz})^{\wedge s+1} \wedge Y).$$
In general, differentials on the $E_r$-page have tri-degrees given by
$$d_r^{s,t,u}:E_r^{s,t,u} \rightarrow E_r^{s+r,t+r-1,u}.$$
We call this spectral sequence isotropic motivic Adams spectral sequence. 
\end{dfn}

The isotropic Adams spectral sequence converges to the homotopy groups of a motivic spectrum closely related to $Y$, namely its $\X \wedge \hz$-nilpotent completion that we denote by $Y^{\wedge}_{\X \wedge \hz}$. Before proceeding, let us recall from \cite[Section 5]{Bo} how to construct the $E$-nilpotent completion of a spectrum $Y$ for a homotopy ring spectrum $E$. 

\begin{dfn}
	\normalfont
Let $E$ be a homotopy ring spectrum and $Y$ a motivic spectrum in $\SH(k)$. First, define $\overline{E}$ by the following distinguished triangle in $\SH(k)$:
$$\overline{E} \rightarrow \s \rightarrow E \rightarrow \Sigma^{1,0}\overline{E}.$$
Then, define $\overline{E}_{n}$ as $Cone(\overline{E}^{\wedge n+1} \rightarrow \s)$ in $\SH(k)$. This way one gets an inverse system
$$ \dots \rightarrow \overline{E}_{n} \wedge Y \rightarrow \dots \rightarrow \overline{E}_{1} \wedge Y \rightarrow \overline{E}_{0} \wedge Y$$
and the $E$-nilpotent completion of $Y$ is the motivic spectrum defined by $Y^{\wedge}_E=\hl (\overline{E}_{n} \wedge Y)$.
\end{dfn}

 Note that, by \cite[Proposition 2.3]{T}, if $Y$ is an isotropic motivic spectrum then also $Y^{\wedge}_E$ is so.

\begin{prop}\label{conv}
	Let $Y$ be an isotropic motivic spectrum. If $\varprojlim_{r}^1 E_r^{s,t,u}=0$ for any $s,t,u$, then the isotropic motivic Adams spectral sequence for $Y$ is strongly convergent to the stable motivic homotopy groups of the $\hz$-nilpotent completion of $Y$.	
\end{prop}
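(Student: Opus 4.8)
The plan is to follow the standard template for convergence of Adams-type spectral sequences (as in Boardman \cite{Bo}), adapted to the isotropic setting, and to reduce the identification of the target to the already-cited Proposition \ref{conv}'s hypotheses plus the construction of $Y^{\wedge}_{\X \wedge \hz}$. First I would recall that the isotropic motivic Adams spectral sequence arises from applying $\pi_{**}$ to the Postnikov tower whose $s$-th stage is $(\overline{\X \wedge \hz})^{\wedge s} \wedge Y$, and that this tower is precisely the tower computing the $\X \wedge \hz$-nilpotent completion: the partial totalizations $\overline{(\X \wedge \hz)}_n \wedge Y = Cone((\overline{\X \wedge \hz})^{\wedge n+1} \wedge Y \to Y)$ assemble into the inverse system whose homotopy limit is $Y^{\wedge}_{\X \wedge \hz}$. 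Thus the spectral sequence is, up to reindexing, the homotopy spectral sequence of this tower of fibrations, and its abutment is $\pi_{**}$ of the homotopy limit, i.e.\ $\pi_{**}(Y^{\wedge}_{\X \wedge \hz})$, provided the relevant $\varprojlim^1$ term vanishes.

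The key steps, in order, are: (1) identify the exact couple from the Postnikov system with the one coming from the inverse system $\{\overline{(\X \wedge \hz)}_n \wedge Y\}$, so that the associated spectral sequence is literally the tower spectral sequence for $\hl_n \overline{(\X \wedge \hz)}_n \wedge Y = Y^{\wedge}_{\X \wedge \hz}$; (2) invoke the Milnor exact sequence for the homotopy groups of a homotopy limit, which gives a short exact sequence relating $\pi_{**}(Y^{\wedge}_{\X \wedge \hz})$, $\varprojlim_n \pi_{**}(\overline{(\X \wedge \hz)}_n \wedge Y)$ and $\varprojlim_n^1 \pi_{*+1,*}(\overline{(\X \wedge \hz)}_n \wedge Y)$; (3) apply Boardman's convergence machinery \cite[Section 5--7]{Bo}: the conditional convergence of the tower spectral sequence to $\pi_{**}(\hl_n -)$ is automatic from the construction, and the hypothesis $\varprojlim_r^1 E_r^{s,t,u}=0$ upgrades conditional convergence to strong convergence (this is Boardman's criterion: a conditionally convergent spectral sequence with vanishing $\varprojlim_r^1$ of the $E_r$-terms is strongly convergent); (4) note that by \cite[Proposition 2.3]{T} the completion $Y^{\wedge}_{\X \wedge \hz}$ is again isotropic, so the statement makes sense internally. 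A small point to check along the way is the compatibility of the two indexing conventions (Postnikov filtration degree $s$ versus the tower index $n$) and that $\overline{\X \wedge \hz}$, being the fiber of $\s \to \X \wedge \hz$ in $\SH(k)$, smashes correctly with the isotropic object $Y$; this is routine since $-\wedge Y$ preserves cofiber sequences.

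The main obstacle I expect is step (3): making precise that "conditional convergence plus $\varprojlim_r^1 E_r = 0$ implies strong convergence" in the motivic, bigraded setting, and in particular checking that Boardman's hypotheses (the spectral sequence comes from a tower, the tower has a well-defined homotopy limit, the derived $E_\infty$-terms behave correctly) are all verified here. Concretely, one must verify that $\pi_{**}$ applied to the Postnikov tower yields an exact couple of the type Boardman calls "conditionally convergent to the colimit/limit," and that the identification of the abutment with $\pi_{**}(Y^{\wedge}_{\X \wedge \hz})$ is not spoiled by a nonvanishing $\varprojlim^1$ at the level of homotopy groups of the tower — but precisely this is ruled out, since $\varprojlim_r^1 E_r^{s,t,u}=0$ forces the relevant $\varprojlim^1$ of homotopy groups of the tower stages to vanish as well (the $E_\infty$-terms are subquotients and the tower is exhaustive). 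Everything else is formal manipulation of exact couples and Milnor sequences; no genuinely new input beyond \cite{Bo}, \cite[Proposition 2.3]{T}, and the construction of the isotropic Adams tower is needed.
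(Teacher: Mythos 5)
There is a genuine gap in the proposal: you establish convergence of the spectral sequence to $\pi_{**}(Y^{\wedge}_{\X \wedge \hz})$, but the statement asserts convergence to $\pi_{**}(Y^{\wedge}_{\hz})$, the homotopy groups of the $\hz$-nilpotent completion (with respect to the motivic Eilenberg--MacLane spectrum $\hz$ itself, not the isotropic ring $\X \wedge \hz$). Your argument never addresses why these two completions agree. A priori there is no reason they should, since the tower defining $Y^{\wedge}_{\hz}$ uses $\overline{\hz}$ (the fiber of $\s \to \hz$) while the Adams tower you describe uses $\overline{\X \wedge \hz}$, and these are different motivic spectra.

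The missing ingredient is the idempotence of $\X$, i.e.\ $\X \wedge \X \cong \X$. Smashing the two defining triangles
$\overline{\hz} \to \s \to \hz$ and $\overline{\X \wedge \hz} \to \s \to \X \wedge \hz$ with $\X$ and comparing, one finds $\X \wedge \overline{\hz} \cong \X \wedge \overline{\X \wedge \hz}$, hence $\X \wedge \overline{\hz}_n \cong \X \wedge (\overline{\X \wedge \hz})_n$ for every $n$. Since $Y$ is an $\X$-module, $Y \cong \X \wedge Y$, and therefore
$Y^{\wedge}_{\X \wedge \hz} = \hl ((\overline{\X \wedge \hz})_n \wedge Y) \cong \hl (\X \wedge (\overline{\X \wedge \hz})_n \wedge Y) \cong \hl (\X \wedge \overline{\hz}_n \wedge Y) \cong \hl(\overline{\hz}_n \wedge Y) = Y^{\wedge}_{\hz}$.
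This identification is precisely what turns ``convergence to the $\X \wedge \hz$-nilpotent completion'' (which your argument does deliver, correctly invoking Boardman plus the vanishing of $\varprojlim^1_r E_r$) into the claimed convergence to the $\hz$-nilpotent completion. Without it, the proposed proof proves a weaker, differently-worded statement. Your steps (1)--(4) are otherwise sound and follow the same Boardman-style template as the paper; the only defect is the omitted comparison of nilpotent completions, which is where the isotropic structure (idempotence of $\X$ and $Y$ being an $\X$-module) is actually used.
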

\begin{proof}
	By \cite[Proposition 6.3]{Bo} and \cite[Remark 6.11]{DI}, under the vanishing hypothesis on $\varprojlim_{r}^1 E_r^{s,t,u}$, the isotropic motivic Adams spectral sequence strongly converges to $\pi_{**}(Y^{\wedge}_{\X \wedge \hz})$. It only remains to notice that, since $Y$ is a $\X$-module, its $\hz$-nilpotent and $\X \wedge \hz$-nilpotent completions coincide. In fact, after smashing with $\X$ the morphism of distinguished triangles
	$$
	\xymatrix{
		\overline{\hz} \ar@{->}[r] \ar@{->}[d]&  \s \ar@{->}[r] \ar@{=}[d] &  \hz \ar@{->}[r] \ar@{->}[d]  & \Sigma^{1,0}\overline{
			\hz}   \ar@{->}[d]\\
		\overline{\X \wedge \hz} \ar@{->}[r] &  \s \ar@{->}[r]  &  \X \wedge\hz \ar@{->}[r]  & \Sigma^{1,0}\overline{
		\X \wedge	\hz}  
	}
	$$
	one gets 
	$$
	\xymatrix{
	\X \wedge	\overline{\hz} \ar@{->}[r] \ar@{->}[d]&  \X  \ar@{->}[r] \ar@{=}[d] & \X \wedge \hz \ar@{->}[r] \ar@{->}[d]^{\cong}  & \Sigma^{1,0}\X \wedge\overline{
			\hz}   \ar@{->}[d]\\
	\X \wedge	\overline{\X \wedge \hz} \ar@{->}[r] &  \X \ar@{->}[r]  &  \X \wedge \X \wedge\hz \ar@{->}[r]  & \Sigma^{1,0}\X \wedge \overline{
			\X \wedge	\hz}  
	}
	$$
	since $\X$ is an idempotent in $\SH(k)$. It follows that $\X \wedge	\overline{\hz} \cong \X \wedge	\overline{\X \wedge \hz}$, and so $\X \wedge \overline{\hz}_n \cong \X \wedge (\overline{\X \wedge \hz})_n$ for any $n$. Therefore, since $Y \cong \X \wedge Y$ one obtains that
	\begin{align*}
		Y^{\wedge}_{\X \wedge \hz}&= \hl ((\overline{\X \wedge \hz})_n \wedge Y)\cong \hl (\X \wedge (\overline{\X \wedge \hz})_n \wedge Y) \\
	&\cong \hl (\X \wedge \overline{\hz}_n \wedge Y) \cong \hl  (\overline{\hz}_n \wedge Y)=Y^{\wedge}_{ \hz}
	\end{align*}
	which is what we wanted to show.
\end{proof}

\begin{rem}\label{abc}
	\normalfont
By \cite[Section 5.2 and Theorem 1.0.3]{Ma}, the $\hz$-completion of a connective motivic spectrum coincides with its $(2,\eta)$-completion. Since all isotropic motivic spectra are $2$-power torsion (see Remark \ref{bach}), and so $2$-complete, the previous result establishes the convergence of the isotropic Adams spectral sequence for a connective isotropic spectrum to the motivic stable homotopy groups of its $\eta$-completion.	
\end{rem}

\begin{dfn}
	\normalfont
A spectral sequence $\{E_r^{s,t,u}\}$ is called Mittag-Leffler if for each $s,t,u$ there exists $r_0$ such that $E_r^{s,t,u} \cong E_{\infty}^{s,t,u}$ whenever $r > r_0$.
\end{dfn} 

Note that every Mittag-Leffler spectral sequence satisfies the condition $\varprojlim_{r}^1 E_r^{s,t,u}=0$ for any $s,t,u$ (see \cite[after Proposition 6.3]{Bo}). We will see that in many important cases the isotropic Adams spectral sequence is Mittag-Leffler, which guarantees strong convergence.

Now, we would like to understand what conditions we need to impose on $Y$ in order to be able to express the $E_2$-page of the isotropic Adams spectral sequence in terms of $\Ext$-groups over the isotropic motivic Steenrod algebra. First, we need the following lemmas.

\begin{lem}\label{kun}
	Let $k$ be a flexible field and $Y$ an object in $\X-\mo$. Then, there exists an isomorphism of left $H_{**}(k/k)$-modules
	$$H^{iso}_{**}(\X \wedge \hz \wedge Y) \cong \A_{**}(k/k) \otimes_{H_{**}(k/k)} H^{iso}_{**}(Y).$$
\end{lem}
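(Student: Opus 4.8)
The plan is to prove this Künneth-type isomorphism by the standard technique: identify $H^{iso}_{**}(\X \wedge \hz \wedge Y)$ with the homotopy groups of $\X \wedge \hz \wedge (\X \wedge \hz) \wedge Y$, and then use that $\X \wedge \hz$ is a motivic $E_\infty$-ring spectrum together with the fact that $\hz \wedge \hz$ (and its isotropic version) is, in the relevant sense, a free module. First I would unwind the definitions: by the definition of isotropic motivic homology, $H^{iso}_{**}(\X \wedge \hz \wedge Y) = H_{**}(\X \wedge \X \wedge \hz \wedge Y) = \pi_{**}(\X \wedge \hz \wedge \X \wedge \hz \wedge Y)$, which since $\X$ is idempotent equals $\pi_{**}((\X \wedge \hz) \wedge_{\X} (\X \wedge \hz) \wedge_{\X} Y)$ where all smash products can be taken over $\X$ after localizing. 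The point is that $\X \wedge \hz \wedge \hz$ computes $\A_{**}(k/k)$, which by the structure result following Proposition \ref{st} is a free $H_{**}(k/k)$-module (it is a polynomial-type algebra $H_{**}(k/k)[\tau_i,\xi_j]/(\tau_i^2)$ over $H_{**}(k/k)$, hence flat, indeed free).

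The key steps, in order, would be: (1) reduce to showing $\pi_{**}(\X \wedge \hz \wedge (\X \wedge \hz) \wedge_{\X} Y) \cong \pi_{**}(\X \wedge \hz \wedge \hz) \otimes_{\pi_{**}(\X \wedge \hz)} \pi_{**}(\X \wedge \hz \wedge Y)$, i.e. a Künneth isomorphism over the ring spectrum $\X \wedge \hz$; (2) invoke Proposition \ref{ss} with $E = \X \wedge \hz$, $M = \X \wedge \hz \wedge (\X \wedge \hz)$ viewed as a right $E$-cellular module, and $N = \X \wedge \hz \wedge Y$, so that there is a strongly convergent spectral sequence with $E^2$-term $\Tor^{\pi_{**}(\X \wedge \hz)}_{s,t,u}(\pi_{**}(\X \wedge \hz \wedge \hz), \pi_{**}(\X \wedge \hz \wedge Y))$; (3) observe that $\pi_{**}(\X \wedge \hz \wedge \hz) = \A_{**}(k/k)$ is free, hence flat, as a module over $\pi_{**}(\X \wedge \hz) = H_{**}(k/k)$, so the higher $\Tor$-terms vanish and the spectral sequence collapses; (4) identify the resulting $E^2 = E^\infty$ term $\A_{**}(k/k) \otimes_{H_{**}(k/k)} H^{iso}_{**}(Y)$ with $H^{iso}_{**}(\X \wedge \hz \wedge Y)$, checking that the module structure is the left $H_{**}(k/k)$-module structure as claimed. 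One should also verify that $\X \wedge \hz \wedge (\X \wedge \hz)$ is indeed cellular as an $\X \wedge \hz$-module over the cellular $\X \wedge \hz$-algebra, which follows since $\hz$ is cellular over the sphere (by the motivic analogue of the cellularity of $\hz \wedge \hz$, or directly from the explicit description of $\A_{**}(k/k)$).

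The main obstacle I expect is the bookkeeping around which smash products are taken over which base and ensuring that the idempotence of $\X$ is used correctly so that $H^{iso}_{**}(\X \wedge \hz \wedge Y)$ genuinely computes $\pi_{**}$ of an $(\X \wedge \hz)$-module smash product rather than something subtly different; in other words, the subtlety is establishing that $\X \wedge \hz \wedge \hz \wedge Y \simeq (\X \wedge \hz) \wedge_{\X} (\X \wedge \hz \wedge Y)$ after $\X$-localization, which is where one must invoke $\X \wedge \X \simeq \X$ (as in the proof of Proposition \ref{conv}). A secondary point requiring care is that $Y$ is an arbitrary object of $\X-\mo$, not assumed cellular, so the cellularity hypothesis in Proposition \ref{ss} must be placed on the correct factor — namely on $M = \X \wedge \hz \wedge (\X \wedge \hz)$ as a right module — and the spectral sequence then applies regardless of any finiteness or cellularity of $Y$ itself. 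Once these identifications are set up correctly, the flatness of $\A_{**}(k/k)$ over $H_{**}(k/k)$ makes the collapse immediate and the isomorphism of left $H_{**}(k/k)$-modules drops out.
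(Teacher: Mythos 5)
Your proposal is correct but takes a genuinely different route from the paper's. The paper invokes Hoyois's theorem that $\hz \wedge \hz$ is a split $\hz$-module, i.e.\ a wedge $\bigvee_{\alpha \in A} \Sigma^{p_\alpha,q_\alpha}\hz$; smashing with $\X$ and then with $Y$ and passing to homotopy groups immediately yields both the freeness of $\A_{**}(k/k)$ over $H_{**}(k/k)$ and the claimed isomorphism, with no spectral sequence needed --- the two short chains of isomorphisms are the whole argument. You instead run the Tor spectral sequence of Proposition \ref{ss} over $E=\X\wedge\hz$ with $M=\X\wedge\hz\wedge\hz$ and $N=\X\wedge\hz\wedge Y$ and kill the higher $\Tor$-terms by flatness of $\A_{**}(k/k)$ over $H_{**}(k/k)$. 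Both arguments rest on the same underlying structural fact, but the paper's version is more elementary: the splitting that guarantees freeness already does all the work that you delegate to the collapse of the spectral sequence. Your route buys a little generality in spirit, since it would work whenever the coefficient algebra is merely flat, without a literal wedge splitting. Two small points worth tightening if you wrote this up: (i) the right $E$-cellularity of $\X\wedge\hz\wedge\hz$ is cleanest to justify exactly via Hoyois's splitting (or by noting $\hz$ is cellular over the sphere and $\hz\wedge(-)$ carries cellular spectra to $\hz$-cellular modules), so you are not actually avoiding that input; (ii) you should state explicitly that the identification $E^2 = E^\infty \cong \A_{**}(k/k)\otimes_{H_{**}(k/k)} H^{iso}_{**}(Y)$ is an isomorphism of left $H_{**}(k/k)$-modules because the entire spectral sequence is one of $\pi_{**}(E)$-modules.
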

\begin{proof}
Since by \cite[Theorem 5.10]{H} $\hz \wedge \hz$ is a split $\hz$-module, i.e. it is equivalent to a wedge sum of the form $\bigvee_{\alpha \in A}\Sigma^{p_{\alpha},q_{\alpha}}\hz$, we have that
	\begin{align*}
		\A_{**}(k/k)& \cong \pi_{**}(\X \wedge \hz \wedge \hz)\\
		& \cong \pi_{**}(\bigvee_{\alpha \in A}\Sigma^{p_{\alpha},q_{\alpha}}(\X \wedge \hz))\\
		& \cong \bigoplus_{\alpha \in A}\Sigma^{p_{\alpha},q_{\alpha}}\pi_{**}(\X \wedge \hz)\\
		& \cong \bigoplus_{\alpha \in A} \Sigma^{p_{\alpha},q_{\alpha}} H_{**}(k/k).
	\end{align*}
	Now, let $Y$ be any object in $\X-\mo$. Then, 
	\begin{align*}
		H^{iso}_{**}(\X \wedge \hz \wedge Y) &\cong \pi_{**}(\X \wedge \hz \wedge \hz \wedge Y)\\
		& \cong \pi_{**}(\bigvee_{\alpha \in A}\Sigma^{p_{\alpha},q_{\alpha}}(\X \wedge \hz \wedge Y))\\
		& \cong \bigoplus_{\alpha \in A}\Sigma^{p_{\alpha},q_{\alpha}}\pi_{**}(\X \wedge \hz \wedge Y) \\
		& \cong \bigoplus_{\alpha \in A} \Sigma^{p_{\alpha},q_{\alpha}} H^{iso}_{**}(Y) \\
		&\cong \A_{**}(k/k) \otimes_{H_{**}(k/k)} H^{iso}_{**}(Y)
	\end{align*}
	which completes the proof.
\end{proof}

\begin{rem}
	\normalfont
	Note that, by the previous lemma, the map $Y \rightarrow \X \wedge \hz \wedge Y$ induces in isotropic motivic homology a coaction $H^{iso}_{**}(Y) \rightarrow \A_{**}(k/k) \otimes_{H_{**}(k/k)} H^{iso}_{**}(Y)$ which structures $H^{iso}_{**}(Y)$ as a left $\A_{**}(k/k)$-comodule.
\end{rem}

In the next result, we show that, if the homology of an isotropic cellular spectrum $Y$ is free over $H_{**}(k/k)$, then the motivic spectrum $\X \wedge \hz \wedge Y$ is a split $\X \wedge \hz$-module.

\begin{lem}\label{gem}
	Let $ k$ be a flexible field and $Y$ an object in $\X-\mo_{cell}$ such that $H^{iso}_{**}(Y)$ is a free left $H_{**}(k/k)$-module generated by a set of elements $\{x_{\alpha}\}_{\alpha \in A}$, where $x_{\alpha}$ has bidegree $(q_{\alpha})[p_{\alpha}]$. Then, there exists an isomorphism of spectra
	$$\bigvee_{\alpha \in A} \Sigma^{p_{\alpha},q_{\alpha}} (\X \wedge \hz) \xrightarrow{\cong} \X \wedge \hz \wedge Y.$$
\end{lem}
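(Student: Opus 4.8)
The plan is to build the splitting map explicitly from the chosen generators, using the $\X\wedge\hz$-module structure on $\X\wedge\hz\wedge Y$, and then to recognise it as an equivalence via the cellular Whitehead theorem (Proposition \ref{check}).

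First I would record two structural observations. Since $Y$ is an $\X$-module and $\X$ is idempotent, $\X\wedge\hz\wedge Y$ is canonically the free $\X\wedge\hz$-module $(\X\wedge\hz)\wedge_{\X}Y$ on the $\X$-module $Y$; in particular it carries an $\X\wedge\hz$-module structure, and being a smash over $\X$ of the $\X$-cellular modules $\X\wedge\hz$ (cellular because $\hz$ is a cellular motivic spectrum) and $Y$, it is itself $\X$-cellular, as is the wedge $\bigvee_{\alpha}\Sigma^{p_{\alpha},q_{\alpha}}(\X\wedge\hz)$. Next, for each $\alpha$ the class $x_{\alpha}\in H^{iso}_{p_{\alpha},q_{\alpha}}(Y)=\pi_{p_{\alpha},q_{\alpha}}(\X\wedge\hz\wedge Y)$ is a map $\s^{p_{\alpha},q_{\alpha}}\to\X\wedge\hz\wedge Y$ of motivic spectra; under the free--forgetful adjunction between $\X$-modules and $\X\wedge\hz$-modules it corresponds to a unique map $\tilde x_{\alpha}\colon\Sigma^{p_{\alpha},q_{\alpha}}(\X\wedge\hz)\to\X\wedge\hz\wedge Y$ of $\X\wedge\hz$-modules whose restriction along the unit $\s^{p_{\alpha},q_{\alpha}}\to\Sigma^{p_{\alpha},q_{\alpha}}(\X\wedge\hz)$ recovers $x_{\alpha}$. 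Assembling these over the coproduct in $\X\wedge\hz$-modules yields the candidate
$$\phi=\bigvee_{\alpha\in A}\tilde x_{\alpha}\colon\bigvee_{\alpha\in A}\Sigma^{p_{\alpha},q_{\alpha}}(\X\wedge\hz)\longrightarrow\X\wedge\hz\wedge Y.$$

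It then remains to check that $\phi$ induces an isomorphism on $\pi_{**}$. Since the motivic spheres are compact in $\SH(k)$, $\pi_{**}$ commutes with the wedge, so $\pi_{**}$ of the source is $\bigoplus_{\alpha}\Sigma^{p_{\alpha},q_{\alpha}}\pi_{**}(\X\wedge\hz)=\bigoplus_{\alpha}\Sigma^{p_{\alpha},q_{\alpha}}H_{**}(k/k)$, and by construction $\phi_{*}$ is the $H_{**}(k/k)$-linear map sending the standard generator of the $\alpha$-th summand to $x_{\alpha}\in\pi_{**}(\X\wedge\hz\wedge Y)=H^{iso}_{**}(Y)$; this is an isomorphism exactly because $\{x_{\alpha}\}$ is an $H_{**}(k/k)$-basis of $H^{iso}_{**}(Y)$. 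As both source and target are $\X$-cellular motivic spectra, Proposition \ref{check} promotes the $\pi_{**}$-isomorphism $\phi$ to an equivalence, giving the asserted splitting.

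The delicate points are bookkeeping rather than genuinely hard. The main one is cellularity of both sides: this is what licenses passing from a $\pi_{**}$-isomorphism to an equivalence, and it would fail for an arbitrary motivic spectrum, so the hypothesis $Y\in\X-\mo_{cell}$ (together with cellularity of $\hz$) is essential here. The other is that $\tilde x_{\alpha}$ must be the map produced by the module adjunction, since that is precisely what forces $\phi_{*}$ to carry the standard generators to the $x_{\alpha}$ and hence to be governed by the freeness hypothesis. The only place a smallness input is used is the commutation of $\pi_{**}$ with infinite wedges, which is standard in $\SH(k)$; I do not anticipate any obstacle beyond these.
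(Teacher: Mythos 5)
Your argument is correct and is essentially the paper's: build the map from the generators $x_\alpha$ by the free--forgetful adjunction into the appropriate module category, identify $\pi_{**}$ of the wedge with $\bigoplus_\alpha \Sigma^{p_\alpha,q_\alpha}H_{**}(k/k)$, observe that by construction the map carries the standard generators to the $x_\alpha$ hence is a $\pi_{**}$-isomorphism by the freeness hypothesis, and then invoke Proposition \ref{check}. The paper applies Proposition \ref{check} with $E=\X\wedge\hz$ (viewing both sides as $\X\wedge\hz$-cellular modules), whereas you apply it with $E=\X$; this is an immaterial variation of the same reasoning.
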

\begin{proof}
	Since $H^{iso}_{**}(Y) \cong \pi_{**}(\X \wedge \hz \wedge Y)$, we can represent each generator $x_{\alpha}$ as a map $\Sigma^{p_{\alpha},q_{\alpha}} \s \rightarrow \X \wedge \hz \wedge Y$, where $(q_{\alpha})[p_{\alpha}]$ is the bidegree of $x_{\alpha}$. For all $\alpha \in A$, this map corresponds bijectively to a map $\Sigma^{p_{\alpha},q_{\alpha}} (\X \wedge \hz) \rightarrow \X \wedge \hz \wedge Y$ of $\X \wedge \hz$-cellular modules. Hence, we get a map
	$$\bigvee_{\alpha \in A} \Sigma^{p_{\alpha},q_{\alpha}} (\X \wedge \hz) \rightarrow \X \wedge \hz \wedge Y$$
	of $\X \wedge \hz$-cellular modules. In order to check that it is an isomorphism, by Proposition \ref{check} it is enough to look at the induced morphisms on homotopy groups. Indeed, we have that on the one hand
	$$\pi_{**}(\bigvee_{\alpha \in A} \Sigma^{p_{\alpha},q_{\alpha}} (\X \wedge \hz)) \cong \bigoplus_{\alpha \in A}\Sigma^{p_{\alpha},q_{\alpha}}  \pi_{**}(\X \wedge \hz) \cong \bigoplus_{\alpha \in A} \Sigma^{p_{\alpha},q_{\alpha}}H_{**}(k/k)$$
	and on the other
	$$\pi_{**}(\X \wedge \hz \wedge Y) \cong \bigoplus_{\alpha \in A} H_{**}(k/k) \cdot x_{\alpha}$$
	by hypothesis. By construction, the map we are considering induces in homotopy groups the homomorphism of $H_{**}(k/k)$-modules 
	$$\pi_{**}(\bigvee_{\alpha \in A} \Sigma^{p_{\alpha},q_{\alpha}} (\X \wedge \hz)) \rightarrow \pi_{**}(\X \wedge \hz \wedge Y)$$
	which sends $1 \in \Sigma^{p_{\alpha},q_{\alpha}} H_{**}(k/k)$ to $x_{\alpha}$ for any $\alpha \in A$, so it is an isomorphism, as we wanted to show.	
\end{proof}

The next lemma provides us with a condition under which the isotropic cohomology of a spectrum is dual to its isotropic homology.

\begin{lem}\label{dual}
	Let $ k$ be a flexible field and $Y$ an object in $\X-\mo$ such that there is an isomorphism $\X \wedge \hz \wedge Y \cong \bigvee_{\alpha \in A} \Sigma^{p_{\alpha},q_{\alpha}} (\X \wedge \hz)$ for some set $A$. Then, for any bidegree $(q)[p]$ there is an isomorphism
	$$H_{iso}^{p,q}(Y) \cong \Hom ^{-p,-q}_{H_{**}(k/k)}(H^{iso}_{**}(Y),H_{**}(k/k)).$$
\end{lem}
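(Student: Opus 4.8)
The plan is to compute both sides of the asserted isomorphism using the only substantive input, namely the hypothesis that $\X \wedge \hz \wedge Y$ splits as a wedge of shifted copies of $\X \wedge \hz$, and then to identify the two answers through the evaluation pairing of isotropic cohomology against isotropic homology.

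First I would rewrite the cohomology side. Since $\X \wedge \hz$ is a motivic $E_{\infty}$-ring spectrum, the free--forgetful adjunction for $\X \wedge \hz$-modules gives
$$H^{p,q}_{iso}(Y)=[Y,\Sigma^{p,q}(\X \wedge \hz)] \cong [\X \wedge \hz \wedge Y,\Sigma^{p,q}(\X \wedge \hz)]_{\X \wedge \hz}.$$
Feeding in $\X \wedge \hz \wedge Y \cong \bigvee_{\alpha \in A} \Sigma^{p_{\alpha},q_{\alpha}}(\X \wedge \hz)$ and using that $\X \wedge \hz$ is the monoidal unit of the category of $\X \wedge \hz$-modules, this becomes
$$\prod_{\alpha \in A}[\Sigma^{p_{\alpha},q_{\alpha}}(\X \wedge \hz),\Sigma^{p,q}(\X \wedge \hz)]_{\X \wedge \hz} \cong \prod_{\alpha \in A}\pi_{p_{\alpha}-p,q_{\alpha}-q}(\X \wedge \hz)=\prod_{\alpha \in A}H_{p_{\alpha}-p,q_{\alpha}-q}(k/k).$$

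Next I would rewrite the homology side. By the same hypothesis, $H^{iso}_{**}(Y)=\pi_{**}(\X \wedge \hz \wedge Y)$ is a free left $H_{**}(k/k)$-module on generators $e_{\alpha}$ of bidegree $(q_{\alpha})[p_{\alpha}]$, so
$$\Hom^{-p,-q}_{H_{**}(k/k)}\big(H^{iso}_{**}(Y),H_{**}(k/k)\big) \cong \prod_{\alpha \in A}\Hom^{-p,-q}_{H_{**}(k/k)}\big(\Sigma^{p_{\alpha},q_{\alpha}}H_{**}(k/k),H_{**}(k/k)\big),$$
which, unwinding the $\Sigma^{p,q}$ and $\Hom^{-p,-q}$ conventions of Section 2, equals $\prod_{\alpha \in A}H_{p_{\alpha}-p,q_{\alpha}-q}(k/k)$. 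Thus both sides are identified with the same product, one summand for each $\alpha$.

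Finally, to upgrade this bijection of groups to a canonical isomorphism (natural in $Y$, which is what the later sections need) I would check that it is induced by the tautological comparison map sending $f \in H^{p,q}_{iso}(Y)$ to the composite
$$H^{iso}_{**}(Y)=\pi_{**}(\X \wedge \hz \wedge Y) \xrightarrow{f_{*}} \pi_{**}\big(\X \wedge \hz \wedge \Sigma^{p,q}(\X \wedge \hz)\big)=\Sigma^{p,q}\A_{**}(k/k) \longrightarrow \Sigma^{p,q}H_{**}(k/k),$$
the last arrow being the augmentation induced by the multiplication $\hz \wedge \hz \to \hz$; this composite is $H_{**}(k/k)$-linear, and under the descriptions above it corresponds on each wedge summand $\Sigma^{p_{\alpha},q_{\alpha}}(\X \wedge \hz)$ to the evident identification, hence is an isomorphism. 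Alternatively, one can deduce the statement from the universal coefficient spectral sequence of Proposition \ref{ss} applied to $E=\X \wedge \hz$, $M=\X \wedge \hz \wedge Y$ and $N=\Sigma^{p,q}(\X \wedge \hz)$: the hypothesis makes $M$ an $E$-cellular spectrum whose homotopy $\pi_{**}M=H^{iso}_{**}(Y)$ is free over $\pi_{**}E=H_{**}(k/k)$, so $\Ext^{s}$ vanishes for $s>0$, the spectral sequence collapses onto its $s=0$ line, and, being concentrated in a single filtration degree, converges with no $\varprojlim^1$ or extension problems to the desired isomorphism once $[M,N]_{E}$ is rewritten as $H^{p,q}_{iso}(Y)$ via the adjunction above. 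I expect the only real work to be the bidegree bookkeeping; there is no geometric content beyond the splitting that is hypothesised.
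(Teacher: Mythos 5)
Your first two paragraphs reproduce the paper's argument essentially verbatim: compute $H_{iso}^{p,q}(Y)$ via the free-forgetful adjunction $[Y,\Sigma^{p,q}(\X\wedge\hz)]\cong[\X\wedge\hz\wedge Y,\Sigma^{p,q}(\X\wedge\hz)]_{\X\wedge\hz}$ and the hypothesised splitting, compute $\Hom^{-p,-q}_{H_{**}(k/k)}(H^{iso}_{**}(Y),H_{**}(k/k))$ using that $H^{iso}_{**}(Y)$ is free, and observe both are $\prod_{\alpha\in A}H_{p_\alpha-p,q_\alpha-q}(k/k)$. Your third paragraph goes beyond what the paper records by exhibiting the isomorphism as induced by the evaluation pairing (so it is canonical in $Y$) and by pointing out that the collapsing universal-coefficient spectral sequence of Proposition \ref{ss} gives the same result directly; both observations are correct and are reasonable strengthenings, but they do not change the core approach.
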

\begin{proof}
	Since $\X \wedge \hz \wedge Y \cong \bigvee_{\alpha \in A} \Sigma^{p_{\alpha},q_{\alpha}} (\X \wedge \hz)$ by hypothesis, we have that
	$$H^{iso}_{**}(Y)=[\s^{**},\X \wedge \hz \wedge Y] \cong [\s^{**},\bigvee_{\alpha \in A} \Sigma^{p_{\alpha},q_{\alpha}} (\X \wedge \hz)] \cong \bigoplus_{\alpha \in A} \Sigma^{p_{\alpha},q_{\alpha}} H_{**}(k/k)$$
	from which it follows that
	$$\Hom ^{-p,-q}_{H_{**}(k/k)}(H^{iso}_{**}(Y),H_{**}(k/k)) \cong \prod_{\alpha \in A}  H_{p_{\alpha}-p,q_{\alpha}-q}(k/k).$$
	On the other hand, we have the following chain of isomorphisms:
	\begin{align*}
		H_{iso}^{p,q}(Y) &= [Y,\Sigma^{p,q}(\X \wedge \hz)]\\
		& \cong [\X \wedge \hz \wedge Y,\Sigma^{p,q}(\X \wedge \hz)]_{\X \wedge \hz}\\
		& \cong [\bigvee_{\alpha \in A} \Sigma^{p_{\alpha},q_{\alpha}} (\X \wedge \hz),\Sigma^{p,q}(\X \wedge \hz)]_{\X \wedge \hz}\\
		& \cong [\bigvee_{\alpha \in A}\s^{p_{\alpha},q_{\alpha}},\Sigma^{p,q}(\X \wedge \hz)] \\
		& \cong \prod_{\alpha \in A}  H_{p_{\alpha}-p,q_{\alpha}-q}(k/k)
	\end{align*}
	that concludes the proof. 	
\end{proof}

Now, we need to define a certain concept of finiteness which suits the isotropic environment. 

\begin{dfn}
	\normalfont
We say that a set of bidegrees $\{(q_{\alpha})[p_{\alpha}]\}_{\alpha \in A}$ is isotropically finite type if for any bidegree $(q)[p]$ there are only finitely many $\alpha \in A$ such that $p-p_{\alpha} \geq 2(q-q_{\alpha}) \geq 0$. Moreover, we say that a set of bigraded elements $\{x_{\alpha}\}_{\alpha \in A}$ is isotropically finite type if the corresponding set of bidegrees is so.
\end{dfn}

\begin{lem}\label{ift}
Let $k$ be a flexible field and $\{(q_{\alpha})[p_{\alpha}]\}_{\alpha \in A}$ an isotropically finite type set of bidegrees. Then, for any bidegree $(q)[p]$, the obvious map
$$\pi_{p,q}(\X \wedge \bigvee_{\alpha \in A} \Sigma^{p_{\alpha},q_{\alpha}} \hz) \rightarrow \Hom ^{p,q}_{\A^{**}(k/k)}(H_{iso}^{**}(\bigvee_{\alpha \in A} \Sigma^{p_{\alpha},q_{\alpha}} \hz),H^{**}(k/k))$$
is an isomorphism.
\end{lem}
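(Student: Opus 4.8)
The plan is to reduce the assertion to the case of a single cell, where it is a tautology, and then to recover the general case by passing to the colimit over the finite sub-wedges; the isotropically finite type hypothesis is exactly what makes this colimit argument work.

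First I would identify the left-hand side. Since $\X$ is idempotent, $\X \wedge \bigvee_{\alpha \in A}\Sigma^{p_\alpha,q_\alpha}\hz \cong \bigvee_{\alpha \in A}\Sigma^{p_\alpha,q_\alpha}(\X \wedge \hz)$, and as $\s^{p,q}$ is compact the functor $\pi_{p,q}$ commutes with the filtered homotopy colimit of finite sub-wedges, whence
\[
\pi_{p,q}\Big(\X \wedge \bigvee_{\alpha \in A}\Sigma^{p_\alpha,q_\alpha}\hz\Big) \cong \bigoplus_{\alpha \in A}H_{p-p_\alpha,q-q_\alpha}(k/k).
\]
By Theorem \ref{vis}, $H_{c,d}(k/k) \cong H^{-c,-d}(k/k)$ is at most one-dimensional and vanishes unless $c \geq 2d \geq 0$; together with the isotropically finite type hypothesis this shows only finitely many summands are nonzero, so the left-hand side is a finite-dimensional $\F$-vector space, naturally identified with $\bigoplus_{\alpha}H^{p_\alpha-p,q_\alpha-q}(k/k)$.

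Next I would analyse the right-hand side. Since cohomology carries wedges to products and $H^{**}_{iso}(\hz) = \A^{**}(k/k)$, there is an isomorphism of left $\A^{**}(k/k)$-modules $H^{**}_{iso}(\bigvee_{\alpha}\Sigma^{p_\alpha,q_\alpha}\hz) \cong \prod_{\alpha}\Sigma^{-p_\alpha,-q_\alpha}\A^{**}(k/k)$ (a genuine infinite product, since $\A^{**}(k/k)$ is nonzero in every bidegree). For a single shifted cell $\Sigma^{a,b}\hz$ the map in the statement is an isomorphism: a bidegree $(p,q)$ $\A^{**}(k/k)$-linear map out of the free module $\Sigma^{-a,-b}\A^{**}(k/k)$ is determined by the image of its canonical generator, which may be any element of $H^{a-p,b-q}(k/k)$, and unwinding the definition — the obvious map sends $x$ to $\phi \mapsto (\mathrm{id}_{\X} \wedge \phi)\circ x$, using $\X \wedge \X \cong \X$ — shows it picks out exactly this evaluation; since $\pi_{p,q}(\X \wedge \Sigma^{a,b}\hz) = H_{p-a,q-b}(k/k) = H^{a-p,b-q}(k/k)$, this settles the single-cell case, and additivity handles arbitrary finite wedges, the obvious map being diagonal with respect to the splittings.

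It then remains to pass to the colimit over the finite subsets $F \subseteq A$. Writing $\bigvee_{\alpha}\Sigma^{p_\alpha,q_\alpha}\hz \cong \hc_{F}\bigvee_{\alpha \in F}\Sigma^{p_\alpha,q_\alpha}\hz$, naturality presents the obvious map as the composite of the colimit of the finite-wedge isomorphisms with the canonical map $\varinjlim_{F}\Hom^{p,q}_{\A^{**}(k/k)}(H^{**}_{iso}(\bigvee_{\alpha \in F}\Sigma^{p_\alpha,q_\alpha}\hz), H^{**}(k/k)) \to \Hom^{p,q}_{\A^{**}(k/k)}(H^{**}_{iso}(\bigvee_{\alpha}\Sigma^{p_\alpha,q_\alpha}\hz), H^{**}(k/k))$. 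So the lemma reduces to the statement that every bidegree $(p,q)$ $\A^{**}(k/k)$-linear map $f \colon \prod_{\alpha}\Sigma^{-p_\alpha,-q_\alpha}\A^{**}(k/k) \to H^{**}(k/k)$ is supported on finitely many indices, i.e.\ annihilates $\prod_{\alpha \notin F}\Sigma^{-p_\alpha,-q_\alpha}\A^{**}(k/k)$ for some finite $F$. One half is easy: the restriction $f(g_\alpha) \in H^{p_\alpha-p,q_\alpha-q}(k/k)$ of $f$ to the $\alpha$-th generator vanishes unless $p_\alpha - 2q_\alpha \leq p - 2q$ and $q_\alpha \leq q$, which by the finite type hypothesis holds for only finitely many $\alpha$. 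The hard part — which I expect to be the main obstacle, and where the finite type hypothesis really earns its keep — is to upgrade this to the corresponding Specker-type statement that $f$ itself, not merely its restriction to each factor, is determined by those finitely many generators. The strategy would be to exploit the extreme sparsity of $H^{**}(k/k)$: a homogeneous element $x$ of the product on which $f$ can be nonzero must have bidegree $(c,d)$ with $(c-p,d-q)$ equal to the bidegree of a square-free monomial $r_T = \prod_{i \in T}r_i$, and applying the associated product $Q_T = \prod_{i \in T}Q_i$ of Milnor operations (so that $Q_T r_T = \pm 1$) reduces to the case of target $H^{0,0}(k/k) = \F$; one then combines the $\A^{**}(k/k)$-linearity of $f$, the vanishing of $f$ on the relevant generators, and the exponentially growing gaps among the nonzero bidegrees of $H^{**}(k/k)$ to conclude $f(x) = 0$.
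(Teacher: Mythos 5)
Your route — colimit over finite sub-wedges, with the single-cell case as base — is a genuine reorganization, but it funnels into exactly the same bottleneck as the paper's own argument, only from a different side. The paper instead writes down a commutative square whose top edge is the map in question, identifies the left vertical with $\Hom^{p,q}_{\F}(\bigoplus_\alpha \Sigma^{-p_\alpha,-q_\alpha}\F,H^{**}(k/k))$ using the same finiteness count you use (to turn $\bigoplus_\alpha H^{p_\alpha-p,q_\alpha-q}(k/k)$ into $\prod_\alpha$), identifies the bottom edge with the free--forgetful adjunction over $\A^{**}(k/k)$, and identifies the right vertical with restriction along $\bigoplus_\alpha \Sigma^{-p_\alpha,-q_\alpha}\A^{**}(k/k) \hookrightarrow \prod_\alpha \Sigma^{-p_\alpha,-q_\alpha}\A^{**}(k/k) \cong H^{**}_{iso}(\bigvee_\alpha)$. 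That right vertical is \emph{precisely} your ``Specker-type'' surjectivity: a bidegree-$(p,q)$ $\A^{**}(k/k)$-linear map out of the infinite product must factor through a finite sub-product. The paper justifies this restriction being an isomorphism by the observation that $\Hom^{p,q}_{\A^{**}(k/k)}(\Sigma^{-p_\alpha,-q_\alpha}\A^{**}(k/k),H^{**}(k/k))\cong H^{p_\alpha-p,q_\alpha-q}(k/k)$ vanishes for all but finitely many $\alpha$, i.e.\ the same finiteness count you make, without elaborating further on why the restriction is injective. So the two proofs differ in packaging (your filtered colimit versus the paper's four-term square) but not in where the real content sits.

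The genuine gap is the one you yourself flag: you never complete the Specker-type factorization, only sketch a strategy. The $Q_T$-reduction to target $H^{0,0}(k/k)=\F$ is a sensible first move, but it leaves you with the hard half: the source in bidegree $(p,q)$ is $\prod_\alpha \A^{p-p_\alpha,q-q_\alpha}(k/k)$, which (because $\A^{**}(k/k)$ is nonzero in a very large range of bidegrees) generically has infinitely many nonzero factors even when the finite type hypothesis holds, and $\F$-linear functionals on infinite $\F$-products that kill the direct sum do exist in the abstract. Your final sentence invokes $\A^{**}(k/k)$-linearity, vanishing on generators, and the exponential gaps in $H^{**}(k/k)$, but does not show how they combine to rule such a functional out; as written, that is an appeal to the result rather than a proof. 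Since the paper's own treatment of the same step is similarly brief, I would not call your route wrong, but your proposal does not actually close the argument at the one place where new work is needed, and you have correctly diagnosed that as the obstruction. One small incidental: the first step does not need idempotency of $\X$; smash product distributes over wedges unconditionally, so $\X\wedge\bigvee_\alpha\Sigma^{p_\alpha,q_\alpha}\hz\cong\bigvee_\alpha\Sigma^{p_\alpha,q_\alpha}(\X\wedge\hz)$ holds for any spectrum in place of $\X$.
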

\begin{proof}
First, note that, for any bidegree $(q)[p]$, one has the following commutative diagram
$$
\xymatrix{
	\pi_{p,q}(\X \wedge \bigvee_{\alpha \in A} \Sigma^{p_{\alpha},q_{\alpha}} \hz) \ar@{->}[r] \ar@{->}[d]&  \Hom ^{p,q}_{\A^{**}(k/k)}(H_{iso}^{**}(\bigvee_{\alpha \in A} \Sigma^{p_{\alpha},q_{\alpha}} \hz),H^{**}(k/k)) \ar@{->}[d]  \\
	\Hom ^{p,q}_{\F}(\bigoplus_{\alpha \in A}  \Sigma^{-p_{\alpha},-q_{\alpha}} \F,H^{**}(k/k)) \ar@{->}[r] & \Hom ^{p,q}_{\A^{**}(k/k)}(\bigoplus_{\alpha \in A}  \Sigma^{-p_{\alpha},-q_{\alpha}} \A^{**}(k/k),H^{**}(k/k)).
}
$$
The left vertical arrow is the isomorphism described by the following chain of equivalences
\begin{align*}
\pi_{p,q}(\X \wedge \bigvee_{\alpha \in A} \Sigma^{p_{\alpha},q_{\alpha}} \hz) &\cong \bigoplus_{\alpha \in A} \pi_{p,q}(\X \wedge \Sigma^{p_{\alpha},q_{\alpha}} \hz) \\
&\cong  \bigoplus_{\alpha \in A} H^{p_{\alpha}-p,q_{\alpha}-q}(k/k)\\
& \cong \prod_{\alpha \in A}H^{p_{\alpha}-p,q_{\alpha}-q} (k/k)\\
& \cong  \prod_{\alpha \in A} \Hom ^{p,q}_{\F}(\Sigma^{-p_{\alpha},-q_{\alpha}} \F,H^{**}(k/k)) \\
	&\cong \Hom ^{p,q}_{\F}(\bigoplus_{\alpha \in A} \Sigma^{-p_{\alpha},-q_{\alpha}} \F,H^{**}(k/k))
\end{align*}
where the identification
$$\bigoplus_{\alpha \in A} H^{p_{\alpha}-p,q_{\alpha}-q}(k/k) \cong \prod_{\alpha \in A}H^{p_{\alpha}-p,q_{\alpha}-q} (k/k)$$
is due to the fact that the set $\{(q_{\alpha})[p_{\alpha}]\}_{\alpha \in A}$ is isotropically finite type, so for any bidegree $(q)[p]$ only for a finite number of $\alpha \in A$ the group $H^{p_{\alpha}-p,q_{\alpha}-q}(k/k)$ is non-zero by Theorem \ref{vis}. The bottom horizontal map is obviously an isomorphism since $\A^{**}(k/k)$ is an $\F$-vector space. The right vertical map is an isomorphism since
\begin{align*}
	\Hom ^{p,q}_{\A^{**}(k/k)}(H^{**}_{iso}(\bigvee_{\alpha} \Sigma^{p_{\alpha},q_{\alpha}} \hz),H^{**}(k/k)) 
	&\cong \Hom ^{p,q}_{\A^{**}(k/k)}(\prod_{\alpha}H^{**}_{iso}(\Sigma^{p_{\alpha},q_{\alpha}} \hz),H^{**}(k/k)) \\
	&= \Hom ^{p,q}_{\A^{**}(k/k)}(\prod_{\alpha} \Sigma^{-p_{\alpha},-q_{\alpha}} \A^{**}(k/k),H^{**}(k/k)) \\
	&\cong \Hom ^{p,q}_{\A^{**}(k/k)}(\bigoplus_{\alpha} \Sigma^{-p_{\alpha},-q_{\alpha}} \A^{**}(k/k),H^{**}(k/k)) 
\end{align*}
where the last isomorphism comes from the fact that the set of bidegrees $\{(q_{\alpha})[p_{\alpha}]\}_{\alpha \in A}$ is isotropically finite type, so for any bidegree $(q)[p]$ only for finitely many $\alpha \in A$ the group
$$\Hom ^{p,q}_{\A^{**}(k/k)}( \Sigma^{-p_{\alpha},-q_{\alpha}}\A^{**}(k/k),H^{**}(k/k))\cong H^{p_{\alpha}-p,q_{\alpha}-q}(k/k)$$ 
is non-trivial by Theorem \ref{vis}. This completes the proof.
\end{proof}

At this point, we are ready to present the structure of the $E_2$-page of the isotropic Adams spectral sequence, which behaves as in the classical case.

\begin{thm}\label{iass}
Let $k$ be a flexible field and $Y$ an object in $\X-\mo_{cell}$ such that $H^{iso}_{**}(Y)$ is a free left $H_{**}(k/k)$-module generated by an isotropically finite type set of elements $\{x_{\alpha}\}_{\alpha \in A}$. Then, the $E_2$-page of the isotropic motivic Adams spectral sequence is described by
$$E_2^{s,t,u} \cong \Ext^{s,t,u}_{\A^{**}(k/k)}(H_{iso}^{**}(Y),H^{**}(k/k)).$$
\end{thm}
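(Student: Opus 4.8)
The plan is to identify the $d_1$-complex $(E_1^{*,*,*},d_1)$ of the isotropic motivic Adams spectral sequence for $Y$ with a cobar complex computing $\Ext_{\A^{**}(k/k)}(H_{iso}^{**}(Y),H^{**}(k/k))$, following the pattern of the classical and motivic Adams spectral sequences (compare \cite[Section 4]{T} and \cite{DI}). The only genuinely new point is that the isotropically finite type hypothesis propagates all the way up the Adams tower, which is exactly what makes the Lemmas above applicable at every filtration degree.

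First I would prove, by induction on $s \geq 0$, that $(\overline{\X \wedge \hz})^{\wedge s} \wedge Y$ lies in $\X-\mo_{cell}$ and that $H^{iso}_{**}((\overline{\X \wedge \hz})^{\wedge s} \wedge Y)$ is a free left $H_{**}(k/k)$-module on an isotropically finite type set of generators. Cellularity is immediate, since $Y$, $\X \wedge \hz$ and hence $\X \wedge \overline{\X \wedge \hz}$ (the fibre of the cellular map $\X \to \X \wedge \hz$) are $\X$-cellular and $\X-\mo_{cell}$ is closed under the smash product over $\X$. For the homology statement, smash the defining triangle $\overline{\X \wedge \hz} \to \s \to \X \wedge \hz$ with $(\overline{\X \wedge \hz})^{\wedge (s-1)} \wedge Y$ and apply $\pi_{**}(\X \wedge \hz \wedge -)$: by Lemma \ref{kun} the middle term becomes $\A_{**}(k/k) \otimes_{H_{**}(k/k)} H^{iso}_{**}((\overline{\X \wedge \hz})^{\wedge (s-1)} \wedge Y)$, and the map out of $H^{iso}_{**}((\overline{\X \wedge \hz})^{\wedge (s-1)} \wedge Y)$ is the comodule coaction, which is split injective via the counit of $\A_{**}(k/k)$; hence the long exact sequence breaks into short exact sequences, yielding by iteration in $s$
$$H^{iso}_{**}((\overline{\X \wedge \hz})^{\wedge s} \wedge Y) \cong \Sigma^{-s,0}\left(\overline{\A}_{**}(k/k)^{\otimes_{H_{**}(k/k)} s} \otimes_{H_{**}(k/k)} H^{iso}_{**}(Y)\right)$$
as left $H_{**}(k/k)$-modules, where $\overline{\A}_{**}(k/k)$ is the augmentation ideal of $\A_{**}(k/k)$. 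From the presentation $\A_{**}(k/k) \cong H_{**}(k/k)[\tau_i,\xi_j]/(\tau_i^2)$ one reads off that $\overline{\A}_{**}(k/k)$ is $H_{**}(k/k)$-free on the non-unit monomials in the $\tau_i$ and $\xi_j$, each of which has non-negative weight and non-negative Chow--Novikov degree, with only finitely many below any fixed weight. A short combinatorial check using these two properties, together with the isotropic finiteness of $\{x_\alpha\}_{\alpha \in A}$, shows that for fixed $s$ the $s$-fold products of such monomials, tensored with $\{x_\alpha\}_{\alpha \in A}$, form again an isotropically finite type set of generators; this closes the induction.

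Now fix $s$ and set $G_s = \bigvee_\alpha \Sigma^{p_\alpha^{(s)},q_\alpha^{(s)}} \hz$, where $\{(q_\alpha^{(s)})[p_\alpha^{(s)}]\}$ is the isotropically finite type set of bidegrees of the chosen $H_{**}(k/k)$-generators of $H^{iso}_{**}((\overline{\X \wedge \hz})^{\wedge s} \wedge Y)$. By Lemma \ref{gem} there is an equivalence $\X \wedge \hz \wedge (\overline{\X \wedge \hz})^{\wedge s} \wedge Y \cong \bigvee_\alpha \Sigma^{p_\alpha^{(s)},q_\alpha^{(s)}}(\X \wedge \hz) \cong \X \wedge G_s$, so Lemma \ref{ift} gives
$$E_1^{s,t,u} \cong \pi_{t-s,u}(\X \wedge G_s) \cong \Hom^{t-s,u}_{\A^{**}(k/k)}(H_{iso}^{**}(G_s),H^{**}(k/k)),$$
with $H_{iso}^{**}(G_s) \cong \prod_\alpha \Sigma^{-p_\alpha^{(s)},-q_\alpha^{(s)}} \A^{**}(k/k)$ a cofree left $\A^{**}(k/k)$-module (which, by Lemma \ref{dual}, is the $H_{**}(k/k)$-linear dual of $H^{iso}_{**}((\overline{\X \wedge \hz})^{\wedge s} \wedge Y)$). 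Exactly as in the classical and motivic cases (\cite[Section 4]{T}, \cite{DI}), the maps of the Adams tower induce on $H_{iso}^{**}(-)$ the standard cosimplicial resolution of $H_{iso}^{**}(Y)$ by these cofree $\A^{**}(k/k)$-modules, with $d_1$ the associated alternating-sum differential; since applying $\Hom_{\A^{**}(k/k)}(-,H^{**}(k/k))$ to a cofree resolution of $H_{iso}^{**}(Y)$ computes $\Ext_{\A^{**}(k/k)}(H_{iso}^{**}(Y),H^{**}(k/k))$, passing to cohomology in the filtration direction yields $E_2^{s,t,u} \cong \Ext^{s,t,u}_{\A^{**}(k/k)}(H_{iso}^{**}(Y),H^{**}(k/k))$, as claimed.

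I expect the main obstacle to be the inductive step that keeps the isotropically finite type condition in force up the tower: one has to verify that tensoring $s$ copies of the (automatically isotropically finite type) monomial basis of $\overline{\A}_{**}(k/k)$ with the prescribed set $\{x_\alpha\}_{\alpha \in A}$ stays isotropically finite type, which is precisely what licenses the use of Lemmas \ref{gem} and \ref{ift} at each filtration degree. The remaining ingredients — the splitting results, the duality between isotropic homology and cohomology, and the identification of $d_1$ with the cobar differential — are a routine transcription to the isotropic setting of the usual construction of the Adams $E_2$-page.
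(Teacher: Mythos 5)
Your proposal is correct and follows essentially the same route as the paper: an induction up the Adams tower propagating the ``free over $H_{**}(k/k)$ on an isotropically finite type set of generators'' property (your cobar formulation $H^{iso}_{**}((\overline{\X \wedge \hz})^{\wedge s} \wedge Y) \cong \Sigma^{-s,0}\overline{\A}_{**}(k/k)^{\otimes s} \otimes H^{iso}_{**}(Y)$ is the same content as the paper's explicit generating set $\{y_\beta x_\alpha\}_{\beta\in B,\alpha\in A_{s-1}}$), then Lemma \ref{gem} to split off wedges of $\X\wedge\hz$, Lemma \ref{ift} to identify the $E_1$-page, and recognition of the resulting complex as (the dual of) a free $\A^{**}(k/k)$-resolution of $H^{**}_{iso}(Y)$. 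Your observation that the coaction is split injective via the counit makes explicit the monomorphism the paper merely asserts, which is a small clarification rather than a change of argument.
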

\begin{proof}
First, we want to prove by induction that $H_{**}^{iso}((\overline{\X \wedge \hz})^{\wedge s} \wedge Y)$ is a free left $H_{**}(k/k)$-module generated by an isotropically finite type set of elements $\{x_{\alpha}\}_{\alpha \in A_s}$ for any $s \geq 0$. The induction basis is guaranteed by hypothesis after setting $A_0=A$. Suppose the statement is true at the $s-1$ stage, i.e. $H_{**}^{iso}((\overline{\X \wedge \hz})^{\wedge s-1} \wedge Y) \cong \bigoplus_{\alpha \in A_{s-1}} \Sigma^{1-s,0}H_{**}(k/k) \cdot x_{\alpha}$. Then, by Lemma \ref{kun}, the map $(\overline{\X \wedge \hz})^{\wedge s-1} \wedge Y \rightarrow \X \wedge \hz \wedge(\overline{\X \wedge \hz})^{\wedge s-1}   \wedge Y$ induces in isotropic motivic homology the monomorphism
$$\bigoplus_{\alpha \in A_{s-1}} \Sigma^{1-s,0}H_{**}(k/k) \cdot x_{\alpha} \rightarrow  \bigoplus_{\alpha \in A_{s-1}} \Sigma^{1-s,0}\A_{**}(k/k) \cdot x_{\alpha}.$$
Hence, the standard Adams resolution induces for any $p$ and $q$ a short exact sequence
$$0 \rightarrow H_{p,q}^{iso}((\overline{\X \wedge \hz})^{\wedge s-1} \wedge Y)  \rightarrow H_{p,q}^{iso}(\X \wedge \hz \wedge(\overline{\X \wedge \hz})^{\wedge s-1} \wedge Y) \rightarrow  H_{p-1,q}^{iso}((\overline{\X \wedge \hz})^{\wedge s} \wedge Y) \rightarrow 0.$$
Now, note that, by the very structure of the dual of the isotropic motivic Steenrod algebra, $\A_{**}(k/k)$ is freely generated over $H_{**}(k/k)$ by a set of generators $\{1,y_{\beta}\}_{\beta \in B}$ which is finite in each bidegree and such that $p_{\beta} \geq 2q_{\beta} \geq 0$ for any $\beta \in B$, where $(q_{\beta})[p_{\beta}]$ is the bidegree of $y_{\beta}$. Hence, the set $\{y_{\beta}x_{\alpha}\}_{\beta \in B,\alpha \in A_{s-1}}$ is isotropically finite type and freely generates $H_{**}^{iso}((\overline{\X \wedge \hz})^{\wedge s} \wedge Y)$ over $H_{**}(k/k)$, i.e. 
$$H_{**}^{iso}((\overline{\X \wedge \hz})^{\wedge s} \wedge Y) \cong \bigoplus_{\beta \in B,\alpha \in A_{s-1}} \Sigma^{-s,0}H_{**}(k/k) \cdot y_{\beta}x_{\alpha}.$$
Therefore, Lemma \ref{gem} implies that all $\X \wedge \hz \wedge(\overline{\X \wedge \hz})^{\wedge s} \wedge Y$ are wedges of appropriately shifted $\X \wedge \hz$. More precisely, for any $s \geq 0$, there exists an isomorphism
$$\X \wedge \bigvee_{\alpha \in A_s} \Sigma^{p_{\alpha}-s,q_{\alpha}} \hz \xrightarrow{\cong} \X \wedge \hz \wedge(\overline{\X \wedge \hz})^{\wedge s} \wedge Y$$ 
where $A_s=B \times A_{s-1}$, from which we deduce, using Lemma \ref{ift}, that the $E_1$-page of the isotropic Adams spectral sequence can be described by
$$E_1^{s,t,u} \cong \pi_{t-s,u}(\X \wedge \hz\wedge (\overline{\X \wedge \hz})^{\wedge s} \wedge Y) \cong \Hom ^{t,u}_{\A^{**}(k/k)}(\bigoplus_{\alpha \in A_s} \Sigma^{-p_{\alpha},-q_{\alpha}} \A^{**}(k/k),H^{**}(k/k)).$$
Moreover, note that
$$0 \leftarrow H_{iso}^{**}(Y) \leftarrow \bigoplus_{\alpha \in A_0} \Sigma^{-p_{\alpha},-q_{\alpha}} \A^{**}(k/k) \leftarrow \bigoplus_{\alpha \in A_1} \Sigma^{-p_{\alpha},-q_{\alpha}} \A^{**}(k/k) \leftarrow \dots$$
is a free $\A^{**}(k/k)$-resolution of $H_{iso}^{**}(Y)$. Thus, for any $s,t,u$ we have an isomorphism
$$E_2^{s,t,u} \cong \Ext^{s,t,u}_{\A^{**}(k/k)}(H_{iso}^{**}(Y),H^{**}(k/k))$$
as we aimed to prove. 
\end{proof}

By using the isotropic motivic Adams spectral sequence, in \cite{T} we computed the isotropic motivic homotopy groups of the sphere spectrum which can be identified with the $E_2$-page of the classical Adams spectral sequence.

\begin{thm}
	Let $k$ be a flexible field. Then, the stable motivic homotopy groups of the $\hz$-completed isotropic sphere spectrum are completely described by
	$$\pi_{*,*'}(\X^{\wedge}_{\hz}) \cong \Ext_{\G^{**}}^{2*'-*,2*',*'}(\F,\F) \cong \Ext_{\A^*}^{2*'-*,*'}(\F,\F).$$
\end{thm}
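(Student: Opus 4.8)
The plan is to apply Theorem \ref{iass} to the isotropic sphere spectrum $\X$ itself, viewed as an object of $\X-\mo_{cell}$ (it is the unit, hence cellular). The first step is to identify $H^{iso}_{**}(\X)$. Since $\X \wedge \hz$ is the isotropic Eilenberg--MacLane spectrum and $H^{iso}_{**}(\X) = \pi_{**}(\X \wedge \hz) = H_{**}(k/k)$, which is free of rank one over $H_{**}(k/k)$, generated by the single element in bidegree $(0)[0]$. A one-element generating set is trivially isotropically finite type, so the hypotheses of Theorem \ref{iass} are satisfied and the $E_2$-page is $\Ext^{s,t,u}_{\A^{**}(k/k)}(H^{**}(k/k), H^{**}(k/k))$.

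The second step is to compute this $\Ext$-group over the isotropic motivic Steenrod algebra and to show it collapses to something topological. By Proposition \ref{st}, $\A^{**}(k/k) \cong H^{**}(k/k) \otimes_{\F} \G^{**} \otimes_{\F} \M^{**}$ as a bimodule, and Theorem \ref{vis} says $H^{**}(k/k) = \Lambda_{\F}(r_i)_{i \geq 0}$ with $Q_j r_i = \delta_{ij}$. The key structural input is that $H^{**}(k/k)$ is a free module over the Milnor subalgebra $\M^{**}$ — indeed the pairing $Q_j r_i = \delta_{ij}$ exhibits $H^{**}(k/k)$ as (up to a shift) the dual of $\M^{**}$, hence $\M^{**}$-free of rank one. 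A change-of-rings argument (Shapiro's lemma for the Hopf-algebra extension $\M^{**} \to \A^{**}(k/k) \to \G^{**}$, or equivalently the observation that $\A^{**}(k/k) \otimes_{\M^{**}} H^{**}(k/k) \cong H^{**}(k/k) \otimes_\F \G^{**}$) reduces the computation to $\Ext$ over $\G^{**}$, giving $\Ext^{s,t,u}_{\A^{**}(k/k)}(H^{**}(k/k), H^{**}(k/k)) \cong \Ext^{s,t,u}_{\G^{**}}(\F, \F)$. Since $\G^{**}$ is just the classical topological Steenrod algebra $\A^*$ with everything doubled in weight ($\G^{2q,q} = \A^q$, zero otherwise), we get $\Ext^{s,t,u}_{\G^{**}}(\F,\F) = 0$ unless $t = 2u$, in which case it equals $\Ext^{s,u}_{\A^*}(\F,\F)$; rewriting indices with $(*,*') = (t-s, u)$ yields the two displayed isomorphisms.

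The third step is convergence: one must verify that the isotropic Adams spectral sequence for $\X$ converges to $\pi_{**}(\X^{\wedge}_{\hz})$, which by Proposition \ref{conv} amounts to checking the $\varprojlim^1$-vanishing condition. Here the collapse from step two helps: because the $E_2$-page is concentrated on the line $t = 2u$, there is no room for differentials (a differential $d_r$ changes $t$ by $r-1$ while fixing $u$, so it must vanish for $r \geq 2$), hence the spectral sequence degenerates at $E_2$, is trivially Mittag--Leffler, and strong convergence follows. Alternatively one can cite the computation already carried out in \cite{T}.

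The main obstacle I anticipate is the change-of-rings step: making precise that $H^{**}(k/k)$ is $\M^{**}$-free and that the relevant extension of (co)algebras is well-behaved enough for Shapiro's lemma to apply in the bigraded motivic setting. The coproduct formulas $\Delta(Q_i) = Q_i \otimes 1 + 1 \otimes Q_i$ show $\M^{**}$ is a normal sub-Hopf-algebra of (the relevant coalgebra quotient of) $\A^{**}(k/k)$, with quotient $\G^{**}$, so the extension is as clean as one could hope; the care needed is mostly bookkeeping of the three gradings and checking that the freeness from Theorem \ref{vis} is genuinely a statement of free modules and not merely an additive isomorphism. Everything else — applying Theorem \ref{iass}, identifying $\G^{**}$ with $\A^*$, degenerating the spectral sequence — is formal.
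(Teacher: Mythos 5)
The paper's own proof of this statement is a one-line citation to Theorem 5.7 of the author's earlier work \cite{T}, so any honest comparison has to be against that paper. Your overall strategy --- apply Theorem \ref{iass} to $Y=\X$, compute the $E_2$-page by a change of rings along $\M^{**} \to \A^{**}(k/k) \to \G^{**}$, and observe collapse by sparsity --- matches the shape of the argument in \cite{T}, and your first and third steps are fine: $H^{iso}_{**}(\X)=H_{**}(k/k)$ is free of rank one over itself with a single generator in bidegree $(0)[0]$, so the hypothesis of Theorem \ref{iass} is trivially satisfied, and once the $E_2$-page is seen to be concentrated on the line $t=2u$ there is no room for higher differentials.

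The gap is in the pivotal second step. You assert that $H^{**}(k/k)$ is a free $\M^{**}$-module of rank one on the grounds that it is the dual of $\M^{**}$. That inference is valid only for a \emph{finite-dimensional} Hopf algebra, where self-injectivity forces the dual to be free; here $\M^{**}=\Lambda_{\F}(Q_i)_{i\geq 0}$ is infinite, and the conclusion is in fact false. One sees it directly from the gradings recorded in Theorem \ref{vis}: each $Q_i$ has positive cohomological degree, so $\M^{**}$ is concentrated in cohomological degrees $\geq 0$ and is unbounded above, whereas each $r_i$ has negative cohomological degree, so $H^{**}(k/k)=\Lambda_{\F}(r_i)_{i\geq 0}$ is concentrated in degrees $\leq 0$ and is unbounded below. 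Any nonzero free $\M^{**}$-module is unbounded above; there is no top class $r_0r_1r_2\cdots$ to serve as a free generator. Consequently the version of Shapiro's lemma you invoke --- which requires the first variable to be induced, i.e.\ $\M^{**}$-projective --- does not apply.

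What is true, and what makes the change of rings go through, is the dual statement: $H^{**}(k/k)$ is the graded dual of $\M^{**}$, hence \emph{injective} (cofree) rather than free as an $\M^{**}$-module. In the Cartan--Eilenberg change-of-rings spectral sequence for the normal extension $\M^{**}\to\A^{**}(k/k)\to\G^{**}$, the inner $\Ext_{\M^{**}}(-,H^{**}(k/k))$ then collapses to a $\Hom$, and the outer $\Ext_{\G^{**}}$ produces the answer $\Ext^{s,t,u}_{\G^{**}}(\F,\F)$. This is precisely the content of \cite[Theorem 5.4]{T}, which is also the change-of-rings input used in the proof of Theorem \ref{hgmbp} in the present paper. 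The fix to your argument is therefore small --- replace ``$\M^{**}$-free'' by ``$\M^{**}$-injective'' and use the injective rather than the projective form of the change-of-rings theorem --- but as written the Shapiro step rests on a false premise.
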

\begin{proof}
	See \cite[Theorem 5.7]{T}. 
\end{proof}

\section{The motivic Brown-Peterson spectrum}

In this section, we recall from \cite{Ve} the construction of the motivic Brown-Peterson spectrum. Moreover, we compute its isotropic homology and homotopy, which will be useful later on for the construction of the isotropic motivic Adams-Novikov spectral sequence and so for the proofs of our main results.

\begin{dfn}
	\normalfont
Let $\mgl_{(2)}$ be the motivic algebraic cobordism spectrum (see \cite[Section 6.3]{V0}) localised at 2. Then, following \cite[Section 5]{Ve} one defines the motivic Brown-Peterson spectrum at the prime 2 as the colimit of the diagram in $\SH(k)$
$$\dots \rightarrow \mgl_{(2)} \xrightarrow{e_{(2)}} \mgl_{(2)} \xrightarrow{e_{(2)}} \mgl_{(2)} \rightarrow \dots$$
where $e_{(2)}$ is the motivic Quillen idempotent. 
\end{dfn}

Note, in particular, that $\mbp$ is a homotopy commutative ring spectrum and a direct summand of $\mgl_{(2)}$.

\begin{prop}
Let $k$ be a flexible field. Then, there is an isomorphism of $H^{**}(k/k)$-modules
$$H_{iso}^{**}(\mgl) \cong H_{iso}^{**}(\bgl) \cong H^{**}(k/k)[c_1,c_2,\dots]$$
and an isomorphism of $H_{**}(k/k)$-algebras
$$H^{iso}_{**}(\mgl) \cong H^{iso}_{**}(\bgl) \cong H_{**}(k/k)[b_1,b_2,\dots]$$
where $c_i$ is the $i$th Chern class in $H_{iso}^{2i,i}(\bgl)$ and $b_i \in H^{iso}_{2i,i}(\bgl)$ is the dual of $c_1^i$ with respect to the monomial basis, for any $i$. 
\end{prop}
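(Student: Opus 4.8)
The plan is to compute $H^{**}_{iso}(\mgl)$ and $H^{iso}_{**}(\mgl)$ by reducing to the corresponding computation for $\bgl$ and then to the classical picture via the structural results already available over flexible fields. First, since $\mgl$ is the colimit of the tower $\mgl_{(2)} \xrightarrow{e_{(2)}} \mgl_{(2)} \to \cdots$ and $\mbp$ is a direct summand of $\mgl_{(2)}$, while both $\mgl$ and $\bgl$ are built from Thom spaces of the tautological bundles over the Grassmannians (so that, motivically, $\bgl \simeq \mathrm{MGL}$ agrees with $\mathrm{MGL}$ after the standard identification of the zeroth Thom space with $BGL_+$ up to the relevant shifts, cf.\ the usual argument identifying $H^{**}$ of $\mathrm{MGL}$ and of $BGL_+$), one reduces to computing the isotropic (co)homology of $BGL = \bgl$, i.e.\ of the infinite Grassmannian. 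I would first establish the cellularity of $\bgl$ (hence of $\mgl$ and $\mbp$) in $\SH(k)$, so that Proposition~\ref{check} and the Künneth-type statements apply; this is standard since Grassmannians admit cellular structures coming from Schubert cells, and passing to the colimit preserves cellularity.

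Next, I would compute $H^{**}(BGL_+)$ motivically: by the projective bundle formula and induction on the Grassmannian filtration, $H^{**}(BGL_+) \cong H^{**}(k)[c_1,c_2,\dots]$ with $c_i$ in bidegree $(2i,i)$, exactly as in topology. Applying the isotropic localisation functor $\X \wedge (-)$ (equivalently, base-changing coefficients from $H^{**}(k)$ to $H^{**}(k/k)$ along $\hz \to \X \wedge \hz$), and using that the cellular structure is respected, one gets $H^{**}_{iso}(\bgl) \cong H^{**}(k/k)[c_1,c_2,\dots]$. For the homology statement I would dualise: since each skeleton is a finite wedge of motivic cells and $H^{**}_{iso}(\bgl)$ is free over $H^{**}(k/k)$ on the monomials in the $c_i$, the relevant finiteness (each bidegree sees only finitely many monomials — indeed the monomials in $c_i$ form an isotropically finite type set) lets me apply Lemma~\ref{dual}, giving $H^{iso}_{**}(\bgl) \cong \Hom_{H^{**}(k/k)}(H^{**}_{iso}(\bgl), H^{**}(k/k))$. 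The dual basis to the monomial basis in the $c_i$ is the desired basis; the ring structure on $H^{iso}_{**}(\bgl)$ comes from the $H$-space structure on $BGL$ (Whitney sum), and identifying it with $H_{**}(k/k)[b_1,b_2,\dots]$ with $b_i$ dual to $c_1^i$ is the same combinatorial computation as in the topological case (Milnor–Novikov), now carried out over the coefficient ring $H_{**}(k/k)$.

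The main obstacle I anticipate is not the Grassmannian computation itself — that is formal given the projective bundle formula in isotropic motivic cohomology — but rather justifying cleanly that isotropic localisation commutes with the colimit defining $\mbp$ (and with the passage $\mgl \leadsto \bgl$), and that the resulting $H^{**}(k/k)$-modules are genuinely \emph{free} rather than merely built from $H^{**}(k/k)$ by extensions. This is where cellularity is essential: one needs that $\X \wedge \hz \wedge \bgl$ splits as a wedge of shifted copies of $\X \wedge \hz$, which is precisely the content of Lemma~\ref{gem} once freeness of $H^{iso}_{**}$ over $H_{**}(k/k)$ is known, so there is a small circularity to break — I would break it by first doing the cohomology computation directly via the (colimit of) cellular filtrations, where freeness is manifest skeleton by skeleton, and only then deduce the homology statement by the duality of Lemma~\ref{dual}. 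The identification of the multiplicative structure ($b_i$ dual to $c_1^i$) is then a purely algebraic bookkeeping step with coefficients in $H_{**}(k/k)$, identical to the classical one.
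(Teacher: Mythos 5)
Your outline is recognisably correct in spirit, but it takes a longer and less precise route than the paper's, and it has a real gap around the passage from $\bgl$ to $\mgl$. The paper's proof is very short: it observes that the restriction map $H^{**}_{iso}(P^{\infty}) \rightarrow H^{**}_{iso}(P^1)$ is the projection $H^{**}(k/k)[c] \rightarrow H^{**}(k/k)[c]/(c^2)$ (i.e., the class $c$ survives isotropic localisation and restricts to a generator), concludes that $\X \wedge \hz$ is an \emph{oriented} motivic spectrum in the sense of \cite{Ve}, and then cites \cite[Proposition 6.2]{NSO}, which packages in one stroke the Thom isomorphism $E^{**}(\mgl) \cong E^{**}(\bgl)$ and the Grassmannian computation $E^{**}(\bgl) \cong E^{**}[c_1,c_2,\dots]$ together with the dual statement, for any oriented $E$. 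Everything you want to prove by hand is contained in that citation once the orientation is checked; the orientation check is the only thing requiring an argument specific to the isotropic setting, and it is exactly the step you omit.

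The concrete issues in your proposal: (i) the parenthetical claim that $\bgl \simeq \mathrm{MGL}$ ``up to shifts'' is false as a statement about spectra; the identification $H^{**}_{iso}(\mgl) \cong H^{**}_{iso}(\bgl)$ is the Thom isomorphism for $\X \wedge \hz$-cohomology, which needs the orientation of $\X \wedge \hz$, not an equivalence of underlying objects; (ii) the ``base-change'' step from $H^{**}(\bgl)$ to $H^{**}_{iso}(\bgl)$ by smashing with $\X$ is not automatic — $H^{**}_{iso}(X)$ is represented by $\X \wedge \hz$ and does not tautologically agree with $H^{**}(X) \otimes_{H^{**}(k)} H^{**}(k/k)$; one either needs a splitting argument à la \cite[Lemma 5.2]{H}, or, better, to work directly in $\X \wedge \hz$-cohomology using the projective bundle formula there, which again brings you back to having to verify the orientation of $\X \wedge \hz$ first; and (iii) the circularity you flag between freeness of $H^{iso}_{**}$ and Lemma \ref{gem} is genuine, but you do not actually break it — in the paper it is bypassed entirely since \cite[Proposition 6.2]{NSO} already provides both the cohomology and homology isomorphisms for oriented $E$. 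In short: insert the orientation verification at the start, replace the $\bgl \simeq \mathrm{MGL}$ remark by an appeal to the Thom isomorphism for oriented theories, and you essentially recover the paper's argument, at which point citing \cite{NSO} is far more efficient than redoing the Grassmannian induction.
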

\begin{proof}
	First, note that the maps $P^1 \rightarrow P^{\infty}$ and $\hz \rightarrow \X \wedge \hz$ induce a commutative square
	$$
	\xymatrix{
		H^{**}(P^{\infty}) \ar@{->}[r] \ar@{->}[d]&   H^{**}_{iso}(P^{\infty})  \ar@{->}[d]\\
		H^{**}(P^1) \ar@{->}[r] & H^{**}_{iso}(P^1)  
	}
	$$
	where the left vertical morphism is the projection $H^{**}(k)[c] \rightarrow H^{**}(k)[c]/(c^2)$ and $c$ is the only non zero class in $H^{2,1}(P^{\infty}) \cong H^{2,1}(P^1) \cong \Z/2$. If we also denote by $c$ the images of $c$ under the horizontal maps in isotropic motivic cohomology, then the right vertical homomorphism is given by the projection $$H^{**}(k/k)[c] \rightarrow H^{**}(k/k)[c]/(c^2).$$
	Hence, $\X \wedge \hz$ is an oriented motivic spectrum (see \cite[Definition 3.1]{Ve}) and the statement follows immediately from \cite[Proposition 6.2]{NSO}.
\end{proof}
	
Following \cite[Section 6]{H}, let $h:L \rightarrow \F[b_1,b_2,\dots]$ be the homomorphism from the Lazard ring $L$ classifying the formal group law on $\F[b_1,b_2,\dots]$ which is isomorphic to the additive one via the exponential $\sum_{n \geq 0}b_nx^{n+1}$. Lazard's theorem implies that $h(L)$ is a polynomial subring $\F[b'_n| n \neq 2^r-1]$, where $b'_n \equiv b_n$ modulo decomposables. Denote by $\pi:\F[b_1,b_2,\dots] \rightarrow h(L)$ a retraction of the inclusion.

In the next proposition, we give a description of isotropic homology and cohomology of the algebraic cobordism spectrum $\mgl$.

\begin{prop}
	Let $k$ be a flexible field. Then, the coaction $\Delta: H^{iso}_{**}(\mgl)  \rightarrow \A_{**}(k/k) \otimes_{H_{**}(k/k)}H^{iso}_{**}(\mgl) $ factors through $H_{**}(k/k) \otimes_{\F} \G_{**} \otimes_{\F} \F[b_1,b_2,\dots]$ and the composition
	$$H^{iso}_{**}(\mgl) \xrightarrow{\Delta} H_{**}(k/k) \otimes_{\F} \G_{**} \otimes_{\F} \F[b_1,b_2,\dots] \xrightarrow{id \otimes \pi} H_{**}(k/k) \otimes_{\F} \G_{**} \otimes_{\F} h(L)$$
	is an isomorphism of left $\A_{**}(k/k)$-comodule algebras.
	Dually, the map
	$$H^{**}(k/k) \otimes_{\F} \G^{**} \otimes_{\F} h(L)^{\lor} \rightarrow H^{**}_{iso}(\mgl)$$
	is an isomorphism of left $\A^{**}(k/k)$-module coalgebras.
\end{prop}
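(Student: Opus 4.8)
The plan is to mimic the topological computation of Hopkins--Hovey in \cite[Section 6]{H} for $\mathrm{MU}$, transported to the isotropic motivic setting by means of the structural results already established. The key input is the previous proposition, which identifies $H^{iso}_{**}(\mgl)$ with $H_{**}(k/k)[b_1,b_2,\dots]$ as an $H_{**}(k/k)$-algebra, together with Proposition \ref{st} and the Remark following it, which produces the factored $\A_{**}(k/k)$-coaction on $H_{**}(k/k)\otimes_{\F}\G_{**}$ coming from the projection of $\A^{**}(k/k)$ onto $H^{**}(k/k)\otimes_{\F}\G^{**}$. First I would use the orientation of $\X\wedge\hz$ to compute the coaction on the classes $b_i$: exactly as in the topological case, the total $b$-class $b=\sum b_i$ is grouplike-ish in the sense that the coaction is governed by the universal formal group law on $H^{iso}_{**}(\mgl)$ relative to the one on $H_{**}(k/k)\otimes_{\F}\G_{**}$ carried by the $\xi_j$. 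Concretely, the coaction sends $b(x)$ to the composite of the two relevant exponential-type power series, so it indeed lands in $H_{**}(k/k)\otimes_{\F}\G_{**}\otimes_{\F}\F[b_1,b_2,\dots]$ (no $\tau_i$'s appear, since the orientation kills the odd part — this is where the ``even'' phenomenon enters).

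Next, the map $h:L\to\F[b_1,b_2,\dots]$ classifies precisely the formal group law that becomes additive after the change of coordinates given by the exponential $\sum b_n x^{n+1}$. The point of composing the coaction with $\mathrm{id}\otimes\pi$ is that, modulo the $\G_{**}$-coaction, the remaining $\F[b_1,b_2,\dots]$-valued part of the coaction on $b'_n$ (for $n\neq 2^r-1$) is, up to the $\G_{**}$-factor, an isomorphism onto $h(L)=\F[b'_n\mid n\neq 2^r-1]$, because the composite
$$L \xrightarrow{h} \F[b_1,b_2,\dots] \xrightarrow{\pi} h(L)$$
is the identity on $h(L)$ and because Lazard's theorem tells us $h(L)$ is a polynomial ring on the $b'_n$ with $n\neq 2^r-1$, with $b'_n\equiv b_n$ modulo decomposables. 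So I would argue that the composite in the statement is, after choosing the monomial bases dictated by this polynomial structure, a triangular (hence invertible) map of $H_{**}(k/k)$-algebras, and that it is a map of $\A_{**}(k/k)$-comodule algebras because each constituent map is (the coaction is an algebra map by the Remark, $\pi$ is induced by a retraction compatible with the comodule structures by construction, and the identification of the target as a comodule algebra is the one coming from Proposition \ref{st}). The dual statement is then obtained by applying $\Hom_{H_{**}(k/k)}(-,H_{**}(k/k))$ bidegreewise, noting that in each bidegree all the modules in sight are finite free over $H^{**}(k/k)$ (the $b_i$, $\G_{**}$, and $\M^{**}$ all contribute finitely many generators in each bidegree), so duality exchanges comodule-algebra and module-coalgebra structures and turns the isomorphism around; here $h(L)^{\lor}$ is by definition the $H^{**}(k/k)$-linear dual of $h(L)\otimes_{\F}H_{**}(k/k)$.

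The main obstacle I expect is the bookkeeping identifying the $b_i$-part of the $\A_{**}(k/k)$-coaction with the formal-group-law data — i.e. making precise that, after projecting away the Milnor part and retaining only the $\G_{**}$-coaction, the coaction on $H^{iso}_{**}(\mgl)$ really is ``the universal formal group law over the image of Lazard's ring'' in the same way it is topologically. This requires checking that the isotropic motivic formal group law on $H^{iso}_{**}(\mgl)$, restricted along the orientation of $\X\wedge\hz$, reduces mod $2$ and mod the Milnor operations to exactly the classical $\mathrm{MU}_*$-situation base-changed to $H_{**}(k/k)$ — a comparison that should follow from the fact that all the relevant operations $Sq^{2^j}\cdots Sq^2$ act on Chern classes of line bundles by the same formulas as in the Milnor--Hopkins--Hovey topological computation, the Milnor operations $Q_i$ acting trivially on the oriented classes. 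Once that dictionary is in place, everything else is the routine triangularity-plus-Lazard argument of \cite[Section 6]{H}, and the isotropic finiteness in each bidegree (Theorem \ref{vis}) is what legitimizes passing to duals.
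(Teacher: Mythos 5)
Your outline takes a genuinely different route from the paper, and one that is considerably more laborious. The paper's proof never touches formal group laws, orientations, or Lazard's theorem directly: it observes that $\hz\wedge\mgl$ is a split $\hz$-module, so that \cite[Lemma 5.2]{H} gives the base-change identification $H^{iso}_{**}(\mgl)\cong H_{**}(k/k)\otimes_{H_{**}(k)}H_{**}(\mgl)$, and then simply tensors the \emph{global motivic} statement of \cite[Theorem 6.5]{H} --- which already establishes the factorization through $\Pa_{**}\otimes_{\F}\F[b_1,b_2,\dots]$ and the isomorphism via $id\otimes\pi$ for $H_{**}(\mgl)$ as an $\A_{**}(k)$-comodule algebra --- with $H_{**}(k/k)$ over $H_{**}(k)$. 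All of the formal-group-law bookkeeping you flag as ``the main obstacle'' (identifying the coaction on the $b_i$ with the universal composite of exponentials, checking that the Milnor operations act trivially on oriented classes, arguing triangularity via Lazard) is exactly what Hoyois already carried out once and for all at the global motivic level, so the paper black-boxes it. Your plan to redo this entire computation isotropically from scratch is not wrong in principle, but it duplicates work and introduces verification burden precisely where you anticipate difficulties; the base-change approach removes that burden entirely. For the dual statement, note also that the paper invokes the precise notion of ``isotropically finite type'' (a finiteness condition on the slope-$\geq 2$ cone, not merely finite-dimensionality in each bidegree) together with Lemmas \ref{gem} and \ref{dual}; your appeal to ``finite free in each bidegree'' is morally the right idea but should be replaced by that exact condition, since it is what Theorem \ref{vis} makes usable.
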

\begin{proof}
	Since $\hz \wedge \mgl$ is a split $\hz$-module (see the remark after \cite[Definition 5.4]{H}), from \cite[Lemma 5.2]{H} we deduce that 
	$$H^{iso}_{**}(\mgl)\cong \pi_{**}(\X \wedge \hz) \otimes_{\pi_{**}(\hz)} \pi_{**}(\hz \wedge \mgl)  \cong H_{**}(k/k) \otimes_{H_{**}(k)} H_{**}(\mgl)$$
	as an $H_{**}(k/k)$-algebra. From \cite[Theorem 6.5]{H} we know that the coaction $\Delta: H_{**}(\mgl)  \rightarrow \A_{**}(k) \otimes_{H_{**}(k)}H_{**}(\mgl) $ factors through $\Pa_{**}\otimes_{\F} \F[b_1,b_2,\dots]$ and the composition
	$$H_{**}(\mgl) \xrightarrow{\Delta} \Pa_{**} \otimes_{\F} \F[b_1,b_2,\dots] \xrightarrow{id \otimes \pi} \Pa_{**}\otimes_{\F} h(L)$$
	is an isomorphism of left $\A_{**}(k)$-comodule algebras, where $\Pa_{**}$ is the subalgebra of $\A_{**}(k)$ defined by $H_{**}(k)[\xi_1,\xi_2,\dots]$. By tensoring the previous composition with $H_{**}(k/k)$ over $H_{**}(k)$ we get the desired isomorphism, which completes the first part. The second part follows easily, since $\G_{**} \otimes_{\F} h(L)$ is isotropically finite type, from Lemmas \ref{gem} and \ref{dual} by dualizing the homology isomorphism. 	
\end{proof}

The next result provides us with the structure of isotropic homology and cohomology of the motivic Brown-Peterson spectrum $\mbp$.

\begin{prop}\label{imbp}
	Let $k$ be a flexible field. Then, the isotropic motivic homology of $\mbp$ is described as a left $\A_{**}(k/k)$-comodule by
	$$H^{iso}_{**}(\mbp) \cong H_{**}(k/k) \otimes_{\F} \G_{**}.$$
	Dually, the isotropic motivic cohomology of $\mbp$ is described as a left $\A^{**}(k/k)$-module by
	$$H^{**}_{iso}(\mbp) \cong H^{**}(k/k) \otimes_{\F} \G^{**}.$$
\end{prop}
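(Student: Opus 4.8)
The plan is to deduce Proposition \ref{imbp} from the previous proposition describing $H^{iso}_{**}(\mgl)$ and $H^{**}_{iso}(\mgl)$, using the fact that $\mbp$ is a direct summand of $\mgl_{(2)}$ cut out by the motivic Quillen idempotent $e_{(2)}$. First I would recall that, since $\mbp = \hc(\mgl_{(2)} \xrightarrow{e_{(2)}} \mgl_{(2)} \to \cdots)$ and $\hz$-homology commutes with filtered homotopy colimits, there is an identification $H^{iso}_{**}(\mbp) \cong \mathrm{colim}(H^{iso}_{**}(\mgl_{(2)}) \xrightarrow{e_{(2)*}} H^{iso}_{**}(\mgl_{(2)}) \to \cdots)$, and this colimit picks out the image of the idempotent induced by $e_{(2)}$ on $H^{iso}_{**}(\mgl)$. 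The whole point is to match this image, as a left $\A_{**}(k/k)$-comodule algebra, with $H_{**}(k/k) \otimes_{\F} \G_{**}$.

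The key algebraic input is the classical/topological analogue. Under the isomorphism of the previous proposition, $H^{iso}_{**}(\mgl)$ is identified as a left $\A_{**}(k/k)$-comodule algebra with $H_{**}(k/k) \otimes_{\F} \G_{**} \otimes_{\F} h(L)$, where $h(L) = \F[b'_n \mid n \neq 2^r - 1]$ is the image of the Lazard ring. The motivic Quillen idempotent $e_{(2)}$ is constructed precisely so that on $\pi_{**}$ and on homology it realizes, after localizing at $2$, the classical Quillen idempotent on $\mathrm{MU}$, whose effect on $H_*(\mathrm{MU};\F)$ is well known: it is the projection killing the polynomial generators indexed by $n \neq 2^r - 1$ (the $h(L)$-part), leaving exactly the polynomial algebra $\F[\xi_1, \xi_2, \dots] = \G_{**}$ (up to the coefficient ring and the split summand structure). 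So I would argue: the induced idempotent on $H_{**}(k/k) \otimes_{\F} \G_{**} \otimes_{\F} h(L)$ is the tensor product of the identity on $H_{**}(k/k) \otimes_{\F} \G_{**}$ with the augmentation-like projection $h(L) \to \F$, and hence its colimit/image is exactly $H_{**}(k/k) \otimes_{\F} \G_{**}$ as a comodule algebra. This uses that the comodule structure map already factors through $H_{**}(k/k) \otimes_{\F} \G_{**} \otimes_{\F} \F[b_1, b_2, \dots]$, so passing to the summand is compatible with the coaction.

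For the cohomological statement, I would then dualize: since $\G_{**} \otimes_{\F} h(L)$ is isotropically finite type (as noted in the proof of the previous proposition) and $\G_{**}$ is a fortiori isotropically finite type, Lemmas \ref{gem}, \ref{dual} and \ref{ift} let me transport the homology isomorphism to the cohomology isomorphism $H^{**}_{iso}(\mbp) \cong H^{**}(k/k) \otimes_{\F} \G^{**}$ as left $\A^{**}(k/k)$-modules, precisely as was done for $\mgl$ at the end of the previous proof. Alternatively, one can observe directly that $H^{**}_{iso}(\mbp)$ is the image of the dual idempotent on $H^{**}_{iso}(\mgl) \cong H^{**}(k/k) \otimes_{\F} \G^{**} \otimes_{\F} h(L)^{\lor}$, which is $H^{**}(k/k) \otimes_{\F} \G^{**} \otimes_{\F} \F$.

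The main obstacle I anticipate is not a deep one but a bookkeeping one: carefully justifying that the idempotent induced by $e_{(2)}$ on isotropic homology is really the tensor product of the identity with the projection $h(L) \to \F$, rather than something that mixes the $\G_{**}$ and $h(L)$ factors. This requires knowing that $e_{(2)}$ is compatible (via the splitting $\hz \wedge \mgl \simeq$ wedge of suspensions of $\hz$ and base change from $k$ to $k/k$) with the classical Quillen idempotent, whose action on $H_*(\mathrm{MU};\F_2)$ is the standard projection onto $\F_2[\xi_1, \xi_2, \dots]$. So the real work is invoking the classical computation (as in \cite[Section 6]{H} together with the standard description of the Quillen idempotent on $\mathrm{MU}$-homology) and checking that the isotropic coaction respects the summand decomposition — which follows from the factorization through $H_{**}(k/k) \otimes_{\F} \G_{**} \otimes_{\F} \F[b_1, b_2, \dots]$ already established. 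Everything else is formal manipulation of idempotents, filtered colimits, and the duality lemmas.
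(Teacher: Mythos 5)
Your approach is genuinely different from the paper's. The paper does not touch the Quillen idempotent at all: it uses the alternative presentation of $\mbp$ as a regular quotient $\mgl_{(2)}/x$ for a maximal $h$-regular sequence (\cite[Remark 6.20]{H}), cites \cite[Theorem 6.11]{H} to get the non-isotropic computation $H_{**}(\mbp)\cong\Pa_{**}$, and then base-changes to the isotropic setting via the splitness of $\hz\wedge\mbp$ and \cite[Lemma 5.2]{H}, giving $H^{iso}_{**}(\mbp)\cong H_{**}(k/k)\otimes_{H_{**}(k)}\Pa_{**}\cong H_{**}(k/k)\otimes_{\F}\G_{**}$; the cohomological statement then follows by the same dualization you describe (Lemmas \ref{gem} and \ref{dual}, using that $\G_{**}$ is isotropically finite type). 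Your route instead stays in the isotropic world throughout, starting from the $\mgl$ result of the previous proposition and trying to identify the image of $e_{(2)*}$ on $H^{iso}_{**}(\mgl)$.

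The obstacle you flag at the end is more than bookkeeping, though, and as stated your middle step is not quite right. It is not true that $e_{(2)*}$ on $H_{**}(k/k)\otimes_{\F}\G_{**}\otimes_{\F}h(L)$ is literally $\mathrm{id}\otimes\varepsilon$ with $\varepsilon:h(L)\to\F$ the augmentation, nor that the classical Quillen idempotent on $H_*(\mathrm{MU};\F)$ is ``the projection killing the $b_n$ with $n\neq 2^r-1$.'' The decomposition $\F[b_1,b_2,\dots]\cong\G_{**}\otimes_{\F}h(L)$ in \cite[Theorem 6.5]{H} is produced by the coaction followed by a chosen retraction, the $\xi_i$ agree with $b_{2^i-1}$ only modulo decomposables, and the Quillen idempotent is a ring endomorphism that does not preserve the naive monomial splitting. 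What is true, and all that your argument actually needs, is that the \emph{image} of $e_{(2)*}$ corresponds to $H_{**}(k/k)\otimes_{\F}\G_{**}\otimes 1$ under that isomorphism. One way to see this is via the Thom class $\mbp\to\mgl_{(2)}\to\hz$, which induces an injection $H^{iso}_{**}(\mbp)\hookrightarrow\A_{**}(k/k)$ with image $H_{**}(k/k)\otimes_{\F}\G_{**}$ — but at that point you are essentially reproving the content of \cite[Theorem 6.11]{H}, which is what the paper cites directly. So your plan can be made to work, but it is the longer route: the paper sidesteps the idempotent entirely by using the regular-quotient description of $\mbp$, whereas your version requires re-establishing the mod-$2$ homology of $\mbp$ from scratch via the idempotent, which is where the actual content lies.
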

\begin{proof}
	From \cite[Remark 6.20]{H}, one knows that $\mbp$ is equivalent to $\mgl_{(2)}/x$ where $x$ is any maximal $h$-regular sequence, i.e. a sequence of homogeneous elements in $L$ such that $h(x)$ is a regular sequence in $h(L)$ which generates the maximal ideal. Therefore, \cite[Theorem 6.11]{H} implies that there exists an isomorphism of $\A_{**}(k)$-comodules
	$$H_{**}(\mbp) \cong \Pa_{**}.$$
	Since $\hz \wedge \mbp$ is a split $\hz$-module, we deduce from \cite[Lemma 5.2]{H} that 
	$$H_{**}^{iso}(\mbp) \cong H_{**}(k/k) \otimes_{H_{**}(k)} H_{**}(\mbp) \cong H_{**}(k/k) \otimes_{H_{**}(k)} \Pa_{**} \cong H_{**}(k/k) \otimes_{\F} \G_{**}$$
	which proves the first part. The second part follows again from dualization, since $\G_{**}$ is isotropically finite type, by Lemmas \ref{gem} and \ref{dual}.
\end{proof}

Later on, we will also need the isotropic homology and cohomology of $\mbp \wedge \mbp$, which is reported in the following proposition.

\begin{prop}\label{mbp2}
Let $k$ be a flexible field. Then, the isotropic motivic homology of $\mbp \wedge \mbp$ is described as a left $\A_{**}(k/k)$-comodule by
$$H^{iso}_{**}(\mbp \wedge \mbp) \cong H_{**}(k/k) \otimes_{\F} \G_{**} \otimes_{\F} \G_{**}.$$
Dually, the isotropic motivic cohomology of $\mbp \wedge \mbp$ is described as a left $\A^{**}(k/k)$-module by
$$H^{**}_{iso}(\mbp \wedge \mbp) \cong H^{**}(k/k) \otimes_{\F} \G^{**}\otimes_{\F} \G^{**}.$$
\end{prop}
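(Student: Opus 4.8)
The plan is to mimic the proof of Proposition \ref{imbp} almost verbatim, replacing $\mbp$ with $\mbp \wedge \mbp$ at every step. First I would recall from \cite{H} the computation of $H_{**}(\mbp \wedge \mbp)$ as an $\A_{**}(k)$-comodule; since $\mbp$ is a split $\hz$-module (being $\mgl_{(2)}/x$ for a maximal $h$-regular sequence $x$, with $H_{**}(\mbp) \cong \Pa_{**}$ by \cite[Theorem 6.11]{H}), one has that $\hz \wedge \mbp \wedge \mbp$ is again a split $\hz$-module, and \cite[Lemma 5.2]{H} together with the Künneth isomorphism gives $H_{**}(\mbp \wedge \mbp) \cong \Pa_{**} \otimes_{H_{**}(k)} \Pa_{**}$ as an $H_{**}(k)$-algebra, which rewrites as $\Pa_{**} \otimes_{\F} \G_{**}$ once one remembers that $\Pa_{**} \cong H_{**}(k) \otimes_{\F} \G_{**}$.

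Then I would pass to the isotropic world. Since $\hz \wedge \mbp \wedge \mbp$ is a split $\hz$-module, \cite[Lemma 5.2]{H} gives
$$H^{iso}_{**}(\mbp \wedge \mbp) \cong H_{**}(k/k) \otimes_{H_{**}(k)} H_{**}(\mbp \wedge \mbp) \cong H_{**}(k/k) \otimes_{H_{**}(k)} \Pa_{**} \otimes_{\F} \G_{**}.$$
Using $\Pa_{**} \cong H_{**}(k)[\xi_1,\xi_2,\dots]$ and the fact that $H_{**}(k/k) \otimes_{H_{**}(k)} H_{**}(k)[\xi_1,\xi_2,\dots] \cong H_{**}(k/k) \otimes_{\F} \G_{**}$ (this is exactly the step used in Proposition \ref{imbp}, coming from the description of $\A_{**}(k/k)$ and the Milnor splitting), one obtains the desired homology formula $H^{iso}_{**}(\mbp \wedge \mbp) \cong H_{**}(k/k) \otimes_{\F} \G_{**} \otimes_{\F} \G_{**}$, and the $\A_{**}(k/k)$-comodule structure is inherited from the $\A_{**}(k)$-comodule structure under base change, just as before. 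The cohomology statement then follows by dualization: since $\G_{**} \otimes_{\F} \G_{**}$ is a set of bidegrees of isotropically finite type (each $\xi_i$ lives in a bidegree with $p_i = 2q_i > 0$, so products land in bidegrees $(q)[p]$ with $p = 2q \geq 0$ and only finitely many fall below any fixed bound), Lemmas \ref{gem} and \ref{dual} apply: $\X \wedge \hz \wedge \mbp \wedge \mbp$ is a split $\X \wedge \hz$-module, hence its cohomology is the $H_{**}(k/k)$-linear dual of its homology, giving $H^{**}_{iso}(\mbp \wedge \mbp) \cong H^{**}(k/k) \otimes_{\F} \G^{**} \otimes_{\F} \G^{**}$ as a left $\A^{**}(k/k)$-module.

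The only real point requiring care — and the step I expect to be the main obstacle — is verifying the isotropically finite type condition for the generating set $\{\xi_I \otimes \xi_J\}$ of $\G_{**} \otimes_{\F} \G_{**}$, since this is what licenses replacing the direct sum by the product in Lemma \ref{dual} and, ultimately, identifying homology and cohomology. This amounts to a bidegree bookkeeping argument: the monomials $\xi_I \otimes \xi_J$ all have bidegree $(q)[p]$ with $p = 2q$, and for fixed target bidegree there are only finitely many with $p - p_\alpha \geq 2(q - q_\alpha) \geq 0$, which here reduces to finitely many monomials of bounded weight — true because $\G_{**} \otimes_\F \G_{**}$ is of finite type in the classical (single-graded) sense. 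Everything else is a formal transcription of the arguments already carried out for $\mbp$, so I would keep the exposition short and simply point to Propositions \ref{imbp} and the preceding one, to \cite[Lemma 5.2, Theorem 6.11]{H}, and to Lemmas \ref{gem} and \ref{dual}.
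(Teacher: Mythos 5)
Your proof is correct and follows essentially the same route as the paper: the paper's proof also invokes that $\hz \wedge \mbp$ is a split $\hz$-module together with \cite[Lemma 5.2]{H} to get a K\"unneth isomorphism, substitutes Proposition \ref{imbp}, and then dualizes via Lemmas \ref{gem} and \ref{dual}. The only cosmetic difference is that you run the K\"unneth step in the global category and base-change to $H_{**}(k/k)$ at the end, while the paper base-changes first and applies K\"unneth isotropically — these two base-change steps commute, so the arguments are the same.
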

\begin{proof}
Since $\hz \wedge \mbp$ is a split $\hz$-module, by \cite[Lemma 5.2]{H} and Proposition \ref{imbp} we obtain that 
$$H^{iso}_{**}(\mbp \wedge \mbp) \cong (H_{**}(k/k) \otimes_{\F} \G_{**}) \otimes_{H_{**}(k/k)} (H_{**}(k/k) \otimes_{\F} \G_{**})  \cong H_{**}(k/k) \otimes_{\F} \G_{**} \otimes_{\F} \G_{**}.$$
The description of the isotropic cohomology follows again by dualizing the homology isomorphism.
\end{proof}

Now, we compute the isotropic stable homotopy groups of $\mbp$ by using the isotropic Adams spectral sequence developed in the previous section.

\begin{thm}\label{hgmbp}
	Let $k$ be a flexible field. Then, the isotropic motivic homotopy groups of $\mbp$ are described by
	$$\pi^{iso}_{**}(\mbp) \cong \F.$$
\end{thm}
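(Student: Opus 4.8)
The plan is to run the isotropic motivic Adams spectral sequence of Theorem \ref{iass} for $Y = \mbp$ and show that it collapses at $E_2$ with $E_2^{s,t,u}$ supported only in the single group giving $\pi^{iso}_{**}(\mbp) \cong \F$. First I would check that $\mbp$ satisfies the hypotheses of Theorem \ref{iass}: it is an object of $\X-\mo_{cell}$ (being a direct summand of $\mgl_{(2)}$, which is cellular), and by Proposition \ref{imbp} its isotropic homology is $H^{iso}_{**}(\mbp) \cong H_{**}(k/k) \otimes_{\F} \G_{**}$, which is free over $H_{**}(k/k)$ on the monomial basis of $\G_{**} = \F[\xi_1,\xi_2,\dots]$; one must verify this basis is isotropically finite type, which follows because the generator $\xi_j$ sits in a bidegree with positive weight growing with $j$, so in each fixed bidegree only finitely many monomials contribute in the relevant range. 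Hence
$$E_2^{s,t,u} \cong \Ext^{s,t,u}_{\A^{**}(k/k)}(H^{**}_{iso}(\mbp), H^{**}(k/k)).$$

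Next I would compute this $\Ext$. By Proposition \ref{imbp}, $H^{**}_{iso}(\mbp) \cong H^{**}(k/k) \otimes_{\F} \G^{**}$, and by the Remark following Proposition \ref{st} this is precisely the $\A^{**}(k/k)$-module obtained by restriction along the projection $\A^{**}(k/k) \to H^{**}(k/k) \otimes_{\F} \G^{**}$ killing the left ideal generated by the Milnor operations. Using the bimodule decomposition $\A^{**}(k/k) \cong H^{**}(k/k) \otimes_{\F} \G^{**} \otimes_{\F} \M^{**}$ from Proposition \ref{st}, the module $H^{**}_{iso}(\mbp)$ is, up to the coalgebra structure, extended/coinduced in a way that makes the change-of-rings applicable: one identifies $H^{**}(k/k) \otimes_{\F} \G^{**}$ as $\A^{**}(k/k) \otimes_{\M^{**}} (H^{**}(k/k))$ (or the dual comodule statement), so that a Shapiro-type change-of-rings isomorphism gives
$$\Ext^{s,t,u}_{\A^{**}(k/k)}(H^{**}(k/k) \otimes_{\F}\G^{**}, H^{**}(k/k)) \cong \Ext^{s,t,u}_{\M^{**}}(H^{**}(k/k), H^{**}(k/k)).$$
Now by Theorem \ref{vis}, $H^{**}(k/k) \cong \Lambda_{\F}(r_i)_{i\geq 0}$ with $Q_j r_i = \delta_{ij}$, so $H^{**}(k/k)$ is a free module of rank one over the exterior Milnor algebra $\M^{**} = \Lambda_{\F}(Q_i)_{i\geq 0}$ (the $r_i$ generate $H^{**}(k/k)$ freely over the divided-power-like action of the $Q_i$; concretely $H^{**}(k/k)$ is the dual of $\M^{**}$ and the action is the regular one). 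Therefore $\Ext^{s,t,u}_{\M^{**}}(H^{**}(k/k), H^{**}(k/k)) \cong \Ext^{s,t,u}_{\M^{**}}(\M^{**}, \F) \cong \F$ concentrated in $s=0$ and internal degree zero, i.e. $E_2^{0,0,0} \cong \F$ and all other $E_2$ groups vanish.

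With $E_2$ concentrated in a single tridegree, all differentials vanish and the spectral sequence trivially satisfies the Mittag-Leffler condition, so $\varprojlim^1_r E_r = 0$ and Proposition \ref{conv} gives strong convergence to $\pi_{**}$ of the $\hz$-nilpotent completion of $\mbp$. Since $\mbp$ is connective (being built from $\mgl_{(2)}$) and all isotropic spectra are $2$-power torsion hence $2$-complete by Remark \ref{bach}, its $\hz$-completion agrees with $\mbp$ itself up to $\eta$-completion (Remark \ref{abc}); in any case the associated graded of $\pi^{iso}_{**}(\mbp)$ is $\F$ in degree zero, yielding $\pi^{iso}_{**}(\mbp) \cong \F$. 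I expect the main obstacle to be the change-of-rings step: one has to justify carefully that $H^{**}_{iso}(\mbp)$, with its $\A^{**}(k/k)$-module structure coming from the projection killing the Milnor operations, is genuinely coinduced from the sub-Hopf-algebra $\M^{**}$ (equivalently that the dual comodule $H^{iso}_{**}(\mbp)$ is cofree over $\A_{**}(k/k) \mathbin{\square} \cdots$), since the isotropic Steenrod algebra is not the classical one and the Hopf-algebroid bookkeeping — in particular that $\G^{**}$ and $\M^{**}$ split off multiplicatively and compatibly with the coproduct formulas for $Sq^{2n}$ and $Q_i$ recorded after Proposition \ref{st} — needs to be checked against the structure theory from \cite{T}.
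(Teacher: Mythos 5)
Your overall strategy is the same as the paper's: run the isotropic Adams spectral sequence (Theorem \ref{iass}) for $\mbp$, compute the $E_2$-page via a change of rings, observe it is concentrated in tridegree $(0,0,0)$, and appeal to Mittag--Leffler strong convergence (Proposition \ref{conv}, Remark \ref{abc}). Where you diverge is the computation of the $\Ext$ group. The paper changes rings \emph{through} $\G^{**}$, citing \cite[Theorem~5.4]{T} to get $\Ext_{\A^{**}(k/k)}(H^{**}(k/k)\otimes_{\F}\G^{**},H^{**}(k/k))\cong\Ext_{\G^{**}}(\G^{**},\F)\cong\Ext_{\F}(\F,\F)$, whereas you propose a change of rings through $\M^{**}$. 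That route is morally dual and can be made to work, but your execution has several concrete errors.

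First, the identification should be $H^{**}(k/k)\otimes_{\F}\G^{**}\cong\A^{**}(k/k)\otimes_{\M^{**}}\F$ (using that $\A^{**}(k/k)$ is free as a right $\M^{**}$-module by Proposition~\ref{st}); writing $\A^{**}(k/k)\otimes_{\M^{**}}H^{**}(k/k)$ as you did gives the much larger module $H^{**}(k/k)\otimes_{\F}\G^{**}\otimes_{\F}H^{**}(k/k)$. Consequently the Eckmann--Shapiro/change-of-rings isomorphism outputs $\Ext^{s,t,u}_{\M^{**}}(\F,H^{**}(k/k))$, \emph{not} $\Ext^{s,t,u}_{\M^{**}}(H^{**}(k/k),H^{**}(k/k))$ as you wrote. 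These differ: even granting your (incorrect) claim that $H^{**}(k/k)$ is free of rank one over $\M^{**}$, the group $\Ext^0_{\M^{**}}(H^{**}(k/k),H^{**}(k/k))$ would then be the whole bigraded $\M^{**}$, not $\F$ concentrated in bidegree $(0,0)$, contradicting your conclusion. Second, $H^{**}(k/k)\cong\Lambda_{\F}(r_i)_{i\geq 0}$ is \emph{not} free over $\M^{**}=\Lambda_{\F}(Q_i)_{i\geq 0}$: the exterior algebra on infinitely many generators has no top class, so there is no free generator. What is true, and what you actually need, is that $H^{**}(k/k)$ is the graded dual of $\M^{**}$ with the coregular $\M^{**}$-action, hence \emph{injective} over $\M^{**}$; then $\Ext^{s}_{\M^{**}}(\F,H^{**}(k/k))$ vanishes for $s>0$, and $\Hom^{0,0}_{\M^{**}}(\F,H^{**}(k/k))$ picks out the $\M^{**}$-socle, which is $\F\cdot 1$. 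With those corrections your route does deliver $\F$ in tridegree $(0,0,0)$ and the rest of your argument goes through unchanged. The paper's route avoids all of this by pushing the problem onto the second variable, where \cite[Theorem 5.4]{T} has already packaged the coinduction bookkeeping.
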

\begin{proof}
	First, note that, by Proposition \ref{imbp}, $H^{iso}_{**}(\mbp)$ is freely generated over $H_{**}(k/k)$ by $\G_{**}$ which is isotropically finite type. Hence, Theorem \ref{iass} implies that the $E_2$-page of the isotropic motivic Adams spectral sequence for $\X \wedge \mbp$ is given by
	$$E_2^{s,t,u} \cong \Ext_{\A^{**}(k/k)}^{s,t,u}(H_{iso}^{**}(\mbp),H^{**}(k/k)).$$
	Now, we deduce from Proposition \ref{imbp} and \cite[Theorem 5.4]{T} that
	\begin{align*}
     \Ext_{\A^{**}(k/k)}^{s,t,u}(H_{iso}^{**}(\mbp),H^{**}(k/k)) & \cong \Ext_{\A^{**}(k/k)}^{s,t,u}(H^{**}(k/k) \otimes_{\F} \G^{**},H^{**}(k/k)) \\
     &\cong \Ext_{\G^{**}}^{s,t,u}(\G^{**},\F) \\
     &\cong \Ext_{\F}^{s,t,u}(\F,\F) \cong  
     \begin{cases}
     \F & if \: s=t=u=0\\
     0 & otherwise	
     	\end{cases}.
     \end{align*}
	Therefore, the $E_2$-page of the isotropic Adams spectral sequence for $\X \wedge \mbp$ is concentrated just in the tridegree $(0,0,0)$, from which it follows that all differentials from the second on are trivial. Thus, the Mittag-Leffler condition is clearly satisfied, and so strong convergence holds by Proposition \ref{conv}. Then, it immediately follows from Remark \ref{abc} and the fact that $\mbp$ is $\eta$-complete that
	 $$\pi^{iso}_{**}(\mbp) \cong \pi_{**}(\X \wedge \mbp) \cong \F$$
	 which completes the proof.
\end{proof}

In the following sections, it will be also useful to know the isotropic homotopy groups of $\mbp \wedge \mbp$ that we compute in the next result.

\begin{thm}\label{hgmbp2}
	Let $k$ be a flexible field. Then, the isotropic motivic homotopy groups of $\mbp \wedge \mbp$ are described by
	$$\pi^{iso}_{**}(\mbp \wedge \mbp) \cong \G_{**}.$$
\end{thm}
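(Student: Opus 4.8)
The plan is to mimic the proof of Theorem~\ref{hgmbp} (the computation of $\pi^{iso}_{**}(\mbp)$), replacing $\mbp$ with $\mbp \wedge \mbp$ throughout and running the isotropic motivic Adams spectral sequence again. First I would observe that, by Proposition~\ref{mbp2}, the isotropic motivic homology $H^{iso}_{**}(\mbp \wedge \mbp) \cong H_{**}(k/k) \otimes_{\F} \G_{**} \otimes_{\F} \G_{**}$ is a free left $H_{**}(k/k)$-module on the monomial basis of $\G_{**} \otimes_{\F} \G_{**}$. The key combinatorial point is that this basis is isotropically finite type: each basis element sits in a bidegree $(q)[p]$ with $p \geq 2q \geq 0$ (since $\G_{**}$ lives in Chow degree zero, i.e.\ bidegrees $(n)[2n]$), and in each fixed total degree there are only finitely many such monomials. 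Hence $\mbp \wedge \mbp$, which is a cellular $\X$-module, satisfies the hypotheses of Theorem~\ref{iass}, so the $E_2$-page of its isotropic motivic Adams spectral sequence is
$$E_2^{s,t,u} \cong \Ext^{s,t,u}_{\A^{**}(k/k)}(H_{iso}^{**}(\mbp \wedge \mbp),H^{**}(k/k)).$$

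Next I would identify this $\Ext$-group. Using Proposition~\ref{mbp2}, $H^{**}_{iso}(\mbp \wedge \mbp) \cong H^{**}(k/k) \otimes_{\F} \G^{**} \otimes_{\F} \G^{**}$ as a left $\A^{**}(k/k)$-module, and by the change-of-rings argument already used in the proof of Theorem~\ref{hgmbp} (relying on Proposition~\ref{st} and \cite[Theorem 5.4]{T}), the relevant $\Ext$ over $\A^{**}(k/k)$ reduces to $\Ext$ over the bigraded topological Steenrod algebra $\G^{**}$. Concretely I expect
$$\Ext^{s,t,u}_{\A^{**}(k/k)}(H^{**}(k/k) \otimes_{\F} \G^{**} \otimes_{\F} \G^{**},H^{**}(k/k)) \cong \Ext^{s,t,u}_{\G^{**}}(\G^{**} \otimes_{\F} \G^{**},\F) \cong \Ext^{s,t,u}_{\F}(\G_{**},\F),$$
since $\G^{**} \otimes_{\F} \G^{**}$ is a free $\G^{**}$-module on a copy of $\G^{**}$ (dually, $\G_{**}$), so $\Ext$ over $\G^{**}$ collapses to $\Ext$ over $\F$, which is concentrated in homological degree $s=0$ and there equals $\G_{**}$ (placed in the appropriate internal bidegrees $(n)[2n]$). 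Thus the $E_2$-page is concentrated in the line $s=0$ and is isomorphic to $\G_{**}$ as a bigraded $\F$-vector space.

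Once the $E_2$-page is known to be concentrated on the $s=0$ line, all differentials $d_r$ for $r \geq 2$ vanish for degree reasons (they shift $s$ by $r \geq 2$), so the spectral sequence degenerates and the Mittag-Leffler condition of Proposition~\ref{conv} holds, giving strong convergence. As in the proof of Theorem~\ref{hgmbp}, I would then invoke Remark~\ref{abc} together with the fact that $\mbp \wedge \mbp$ is $\eta$-complete (being built from $\mbp$, equivalently an $\mgl$-module, where $\eta$ acts nilpotently/trivially) to conclude that the abutment is exactly $\pi^{iso}_{**}(\mbp \wedge \mbp)$, hence
$$\pi^{iso}_{**}(\mbp \wedge \mbp) \cong \G_{**}.$$

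The main obstacle I anticipate is the careful bookkeeping of gradings: one must check that the internal bidegrees produced by $\Ext_{\F}(\G_{**},\F)$ on the $E_2$-page match the claimed bigrading of $\G_{**}$ in the statement (i.e.\ that the Chow-Novikov/bidegree conventions line up so that $\G_{**}$ appears with its topological grading doubled into Tate twists), and that the isotropically-finite-type condition genuinely holds for the two-variable polynomial algebra $\G_{**} \otimes_{\F} \G_{**}$ — a direct degree count, but one that must be done correctly for Theorem~\ref{iass} to apply. The homotopical input (splitting of $\hz \wedge \mbp$, $\eta$-completeness, convergence) is all already in place from Section~6 and needs only to be reapplied verbatim.
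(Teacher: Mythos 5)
Your proposal matches the paper's proof in all essentials: it invokes Proposition~\ref{mbp2} to identify the isotropic homology, verifies the isotropically-finite-type hypothesis, applies Theorem~\ref{iass} together with the change-of-rings reduction via \cite[Theorem 5.4]{T} to get the $E_2$-page as $\Ext_{\F}(\G^{**},\F)$, and then concludes by degeneration, Mittag-Leffler, and Remark~\ref{abc}. The only cosmetic difference is the degree reason given for degeneration --- you use the fact that the $E_2$-page is concentrated at $s=0$ so that all $d_r$ with $r\geq 2$ vanish, whereas the paper points to the slope-$2$ concentration of $\G_{**}$ --- but both are equally valid and reach the same conclusion.
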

\begin{proof}
	The proof of this theorem goes along the lines of the previous one. Since $H_{**}^{iso}(\mbp \wedge \mbp) \cong H_{**}(k/k) \otimes_{\F} \G_{**}\otimes_{\F} \G_{**}$ by Proposition \ref{mbp2} and $\G_{**}\otimes_{\F} \G_{**}$ is isotropically finite type, by Theorem \ref{iass} we have that the $E_2$-page of the isotropic Adams spectral sequence for $\X \wedge \mbp \wedge \mbp$ is provided by
	$$E_2^{s,t,u} \cong \Ext_{\A^{**}(k/k)}^{s,t,u}(H_{iso}^{**}(\mbp \wedge \mbp),H^{**}(k/k)).$$
	Again, we note that by \cite[Theorem 5.4]{T}
	\begin{align*}
		\Ext_{\A^{**}(k/k)}^{s,t,u}(H_{iso}^{**}(\mbp \wedge \mbp),H^{**}(k/k)) & \cong \Ext_{\A^{**}(k/k)}^{s,t,u}(H^{**}(k/k) \otimes_{\F} \G^{**} \otimes_{\F} \G^{**},H^{**}(k/k)) \\
		&\cong \Ext_{\G^{**}}^{s,t,u}(\G^{**} \otimes_{\F} \G^{**},\F) \\
		&\cong \Ext_{\F}^{s,t,u}(\G^{**},\F) \cong  
		\begin{cases}
			\G_{t,u} & if \: s=0\\
			0 & if \: s \neq 0	
		\end{cases}.
	\end{align*}
	In particular, since $\G_{**}$ is concentrated on the slope $2$ line, we have that all differentials from the second on are trivial by degree reasons. Hence, Mittag-Leffler condition is met, which implies that the spectral sequence is strongly convergent. From all this, it follows as above that
	$$\pi^{iso}_{**}(\mbp \wedge \mbp) \cong \pi_{**}(\X \wedge \mbp \wedge \mbp) \cong  \G_{**}$$
	that is what we aimed to show.
\end{proof}

\section{The category of isotropic cellular MBP-modules}

In this section we start by providing $\X \wedge \mbp$ with an $E_{\infty}$-ring structure. This allows us to talk about the stable $\infty$-category of $\X \wedge \mbp$-modules, i.e. $\X \wedge \mbp - \mo$, and its cellular part, i.e. $\X \wedge \mbp- \mo_{cell}$. Our aim is to focus on isotropic cellular $\mbp$-modules, which is the same as cellular $\X \wedge \mbp$-modules. In particular, we completely describe the category $\X \wedge \mbp-\mo_{cell}$ in algebraic terms. This section is structured along the lines of \cite[Section 3]{GWX}. Therefore, before each result we indicate the one from \cite{GWX} it corresponds to. We hope this could clearly shed light on the deep parallelism between \cite{GWX} and this work.

\begin{prop}
The homotopy commutative ring structure on $\X \wedge \mbp$ extends to an $E_{\infty}$-ring structure.
\end{prop}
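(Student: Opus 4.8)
The statement asserts that the homotopy commutative ring structure on $\X \wedge \mbp$ can be rigidified to an $E_\infty$-ring structure. The natural strategy is to transport the $E_\infty$-structure that already exists on the isotropic sphere $\X$ across a suitable identification, rather than to build a new coherence datum from scratch. First I would recall from the excerpt (and from \cite{T}) that $\X$ is an idempotent $E_\infty$-ring spectrum: the unit $\s \to \X$ induces an equivalence $\X \wedge \X \xrightarrow{\cong} \X$. For such idempotent $E_\infty$-rings, smashing with $\X$ is a smashing localisation of $\SH(k)$, and the category $\X\text{-}\mo$ of modules is a symmetric monoidal localisation of $\SH(k)$ with monoidal unit $\X$; in particular $\X \wedge (-)$ is a symmetric monoidal functor $\SH(k) \to \X\text{-}\mo$. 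This is precisely the content of \cite[Proposition 4.8.2.10]{Lu} invoked earlier in the excerpt.

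\textbf{Key steps.} Given this, the argument proceeds as follows. The motivic Brown--Peterson spectrum $\mbp$ is a direct summand of $\mgl_{(2)}$, and while $\mgl$ itself is an $E_\infty$-ring (as an algebraic cobordism spectrum), the issue is that $\mbp$ is only known to be a homotopy commutative ring. However, once we apply the symmetric monoidal localisation functor $\X \wedge (-)$, the obstructions to $E_\infty$-rigidity can disappear. Concretely, I would argue that $\X \wedge \mbp$ is itself an idempotent $\X \wedge \hz$-... — no: the cleaner route is to observe that $\X \wedge \mgl_{(2)}$ is an $E_\infty$-$\X$-algebra (being the image of the $E_\infty$-ring $\mgl_{(2)}$ under the symmetric monoidal functor $\X \wedge (-)$), and then to realise $\X \wedge \mbp$ as a localisation or a quotient of $\X \wedge \mgl_{(2)}$ by a regular sequence that can be taken $E_\infty$-equivariantly. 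More precisely, using \cite[Remark 6.20]{H} one has $\mbp \simeq \mgl_{(2)}/x$ for a maximal $h$-regular sequence $x$; applying $\X \wedge (-)$ and invoking the computation $\pi^{iso}_{**}(\mbp) \cong \F$ from Theorem \ref{hgmbp}, the Quillen idempotent on $\X \wedge \mgl_{(2)}$ becomes, on homotopy, a projection onto $\F$, so that $\X \wedge \mbp$ is the zeroth Postnikov truncation (in the Chow--Novikov $t$-structure) of the $E_\infty$-ring $\X \wedge \mgl_{(2)}$. Since connective covers and truncations of $E_\infty$-rings are again $E_\infty$-rings — by Lurie's general theory of $t$-structures on $\infty$-categories of $E_\infty$-algebras, \cite{Lu} — one concludes that $\X \wedge \mbp$ inherits an $E_\infty$-structure, and by construction the induced homotopy ring structure agrees with the given one.

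\textbf{The main obstacle.} The delicate point is justifying that $\X \wedge \mbp$ really is such a truncation — i.e. that the Quillen idempotent $e_{(2)}$, which splits $\mbp$ off $\mgl_{(2)}$ only up to homotopy, becomes \emph{canonical} (in fact determined by a $t$-structure truncation) after isotropic localisation. Here the computation $\pi^{iso}_{**}(\X \wedge \mgl) \cong H_{**}(k/k) \otimes_{\F} \F[b_1, b_2, \dots]$ together with $\pi^{iso}_{**}(\mbp) \cong \F$ is essential: it forces the splitting to be rigid, because there is no room for higher homotopical indeterminacy once the homotopy groups are this sparse. I expect this step — promoting the homotopy-level splitting to an $E_\infty$-level identification with a truncation — to absorb most of the work; the remaining verifications (that the symmetric monoidal structure on $\X\text{-}\mo$ behaves as claimed, that truncations of $E_\infty$-rings are $E_\infty$-rings) are formal consequences of \cite{Lu}. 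An alternative, and perhaps the route the author actually takes given the acknowledgement thanking the referees for simplifying Section 7, would be to use the $E_\infty$-structure on $\X \wedge \hz$ together with the identification of $\X \wedge \mbp$-homology and a Hopkins--Mahowald–type recognition, but the truncation argument above is the most direct.
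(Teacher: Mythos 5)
Your instinct to invoke the Chow $t$-structure on $\X-\mo$ is exactly right and is the tool the paper uses, but the route you take through it is different and leaves a real gap, which you yourself flag. The paper's argument is shorter and sidesteps that gap entirely: using \cite[Theorem A.1]{BKWX}, the spectrum $\X \wedge \mgl$ is connective for the Chow $t$-structure and hence so is its retract $\X \wedge \mbp$; by the computation $\pi^{iso}_{**}(\mbp) \cong \F$ together with \cite[Lemma 2.4]{BKWX}, $\X \wedge \mbp$ is also coconnective, hence lies in the heart; and a homotopy commutative ring object sitting in the heart of a $t$-structure compatible with the monoidal structure carries an essentially unique $E_\infty$-structure refining it, because all the coherence data live in a $1$-category. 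No comparison with a truncation of $\X \wedge \mgl_{(2)}$ is needed.

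By contrast, your proof hinges on identifying $\X \wedge \mbp$ \emph{as} $\tau_{\leq 0}(\X \wedge \mgl_{(2)})$ for the Chow--Novikov $t$-structure and then transporting the $E_\infty$-structure along that identification. This is not merely ``most of the work''; it is an additional assertion (that the Quillen idempotent splitting is realised by the $t$-truncation, compatibly with the given ring structures) which you never verify and which the paper never needs. Note also that your first attempted route, realising $\X \wedge \mbp$ as an $E_\infty$-quotient of $\X \wedge \mgl_{(2)}$ by a regular sequence, is exactly the strategy blocked in topology by Lawson's theorem that $\mathrm{BP}$ is not $E_\infty$; you are right to abandon it, but it is worth being explicit that the obstruction to doing quotients $E_\infty$-coherently is the very reason this proposition needs a non-obvious argument. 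Finally, a small slip: you write $\pi^{iso}_{**}(\X \wedge \mgl) \cong H_{**}(k/k) \otimes_{\F} \F[b_1,b_2,\dots]$, but that formula is the isotropic motivic \emph{homology} of $\mgl$, not its isotropic homotopy groups, which the paper does not compute.
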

\begin{proof}
It follows from \cite[Proposition 1.4.4.11]{Lu} that there exists a $t$-structure on $\X-\mo$ with non-negative part generated by $\X^{2n,n}$ for any $n \in \Z$. By \cite[Theorem A.1]{BKWX}, $\X \wedge \mgl$ belongs to the non-negative parte of this $t$-structure, and so also $\X \wedge \mbp$ does. On the other hand, one deduces from Theorem \ref{hgmbp} and \cite[Lemma 2.4]{BKWX} that $\X \wedge \mbp$ belongs to the non-positive part too. Hence, $\X \wedge \mbp$ is a homotopy commutative ring spectrum in the heart of the above mentioned $t$-structure, which means that it is an $E_{\infty}$-ring spectrum.\footnote{I am grateful to Tom Bachmann for this argument.} 	
	\end{proof}

Once we know that $\X \wedge \mbp$ is a motivic $E_{\infty}$-ring spectrum, we can consider the stable $\infty$-category of $\X \wedge \mbp$-modules and its homotopy category which is tensor triangulated. In particular, we focus on its cellular part.

\begin{prop}\label{gem2}
	Let $k$ be a flexible field and $Y$ an object in $\X \wedge \mbp-\mo_{cell}$ such that $\pi_{**}(Y)$ is isomorphic to the $\F$-vector space $\bigoplus_{\alpha \in A} \Sigma^{p_{\alpha},q_{\alpha}} \F$. Then, there exists an isomorphism of spectra
	$$\bigvee_{\alpha \in A} \Sigma^{p_{\alpha},q_{\alpha}} (\X \wedge \mbp) \xrightarrow{\cong} Y.$$
\end{prop}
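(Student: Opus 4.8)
The plan is to mimic the proof of Lemma \ref{gem}, replacing the ring $\X \wedge \hz$ by $\X \wedge \mbp$ and using Proposition \ref{check} (which applies since $\X \wedge \mbp$ is a motivic $E_\infty$-ring spectrum) to detect the equivalence on homotopy groups. First I would pick, for each $\alpha \in A$, a class in $\pi_{p_\alpha, q_\alpha}(Y)$ corresponding to a chosen basis element of the $\F$-vector space $\bigoplus_\alpha \Sigma^{p_\alpha,q_\alpha}\F$; such a class is represented by a map $\Sigma^{p_\alpha, q_\alpha}\s \to Y$. Since $Y$ is an $\X \wedge \mbp$-module and $\X \wedge \mbp$-cellular, this map extends uniquely (by the free-forgetful adjunction for $\X \wedge \mbp$-modules) to a map $\Sigma^{p_\alpha, q_\alpha}(\X \wedge \mbp) \to Y$ of $\X \wedge \mbp$-modules. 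Assembling these gives a map
$$\bigvee_{\alpha \in A} \Sigma^{p_\alpha, q_\alpha}(\X \wedge \mbp) \to Y$$
of $\X \wedge \mbp$-cellular modules.

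Next I would compute the effect on homotopy groups. On the source, using that homotopy groups commute with wedges and that $\pi_{**}(\X \wedge \mbp) \cong \F$ by Theorem \ref{hgmbp}, one gets
$$\pi_{**}\Bigl(\bigvee_{\alpha \in A} \Sigma^{p_\alpha, q_\alpha}(\X \wedge \mbp)\Bigr) \cong \bigoplus_{\alpha \in A} \Sigma^{p_\alpha, q_\alpha}\F,$$
and on the target $\pi_{**}(Y) \cong \bigoplus_{\alpha \in A} \Sigma^{p_\alpha, q_\alpha}\F$ by hypothesis. By construction the induced map sends the generator $1 \in \Sigma^{p_\alpha, q_\alpha}\F$ of the source to the chosen basis element of $\pi_{p_\alpha, q_\alpha}(Y)$, hence is an isomorphism of bigraded $\F$-vector spaces. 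Proposition \ref{check} then upgrades this to a weak equivalence, proving the claim.

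I expect the only delicate point to be the passage from the wedge of spheres to the wedge of free $\X \wedge \mbp$-modules, i.e. the claim that a map $\Sigma^{p_\alpha, q_\alpha}\s \to Y$ of spectra corresponds bijectively to a map $\Sigma^{p_\alpha, q_\alpha}(\X \wedge \mbp) \to Y$ of $\X \wedge \mbp$-modules; this is the free-forgetful adjunction $[\Sigma^{p,q}(\X \wedge \mbp), Y]_{\X \wedge \mbp} \cong [\Sigma^{p,q}\s, Y]$, exactly the step used in Lemma \ref{gem}, and it requires only that $Y$ be an $\X \wedge \mbp$-module (which holds). A minor subtlety is that the indexing set $A$ may be infinite, so one should note that wedges of $\X \wedge \mbp$-modules are again cellular $\X \wedge \mbp$-modules and that $\pi_{**}$ commutes with arbitrary wedges in this setting, both of which are standard. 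Otherwise the argument is entirely parallel to Lemma \ref{gem}, with $\hz$ replaced by $\mbp$ and the coefficient ring $H_{**}(k/k)$ replaced by $\pi_{**}(\X \wedge \mbp) \cong \F$.
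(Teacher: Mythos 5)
Your proof is correct and follows essentially the same approach as the paper: represent each chosen basis element of $\pi_{**}(Y)$ as a map from a sphere, use the free-forgetful adjunction to extend to a map of $\X \wedge \mbp$-modules, assemble the wedge, check it is a $\pi_{**}$-isomorphism using $\pi_{**}(\X \wedge \mbp) \cong \F$ (Theorem \ref{hgmbp}), and conclude with Proposition \ref{check}. The paper's proof is just a terser version of yours.
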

\begin{proof}
	We follow the lines of the proof of Lemma \ref{gem}. Each generator of $\pi_{**}(Y)$ represents a map $\Sigma^{p_{\alpha},q_{\alpha}} \s \rightarrow Y$. For all $\alpha \in A$, this map corresponds bijectively to a map $\Sigma^{p_{\alpha},q_{\alpha}} (\X \wedge \mbp) \rightarrow Y$ of $\X \wedge \mbp$-cellular modules. Hence, we get a map
	$$\bigvee_{\alpha \in A} \Sigma^{p_{\alpha},q_{\alpha}} (\X \wedge \mbp) \rightarrow Y$$
	of $\X \wedge \mbp$-cellular modules that induces an isomorphism on homotopy groups since $\pi_{**}(\X \wedge \mbp) \cong \F$ by Theorem \ref{hgmbp}. Therefore, it follows from Proposition \ref{check} that the above map is an isomorphism of spectra, which completes the proof.
\end{proof}

The previous result implies the following corollary that corresponds to \cite[Corollary 3.3]{GWX}. 

\begin{cor}\label{gem3}
	Let $k$ be a flexible field and $X$ and $Y$ be objects in $\X \wedge \mbp-\mo_{cell}$. Then, 
	$$[X,Y]_{\X \wedge \mbp} \cong \Hom ^{0,0}_{\F}(\pi_{**}(X),\pi_{**}(Y)).$$
\end{cor}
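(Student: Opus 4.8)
The plan is to reduce the computation of $[X,Y]_{\X \wedge \mbp}$ to the case where $X$ is a wedge of suspensions of $\X \wedge \mbp$, for which the answer is immediate. First I would observe that, since $\X \wedge \mbp$ has homotopy groups $\pi_{**}(\X \wedge \mbp) \cong \F$ by Theorem \ref{hgmbp} (concentrated in bidegree $(0,0)$), any cellular $\X \wedge \mbp$-module $X$ has free homotopy over $\pi_{**}(\X \wedge \mbp) = \F$, hence $\pi_{**}(X)$ is automatically an $\F$-vector space of the form $\bigoplus_{\alpha \in A} \Sigma^{p_\alpha,q_\alpha}\F$. Applying Proposition \ref{gem2} to $X$ itself, we obtain an equivalence $X \cong \bigvee_{\alpha \in A}\Sigma^{p_\alpha,q_\alpha}(\X \wedge \mbp)$ of spectra.

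Next I would compute both sides using this wedge decomposition. On the left, mapping out of a wedge turns into a product, and using that $[\Sigma^{p_\alpha,q_\alpha}(\X \wedge \mbp), Y]_{\X \wedge \mbp} \cong [\Sigma^{p_\alpha,q_\alpha}\s, Y] = \pi_{p_\alpha,q_\alpha}(Y)$ by the free–forgetful adjunction (the defining adjunction for modules over an $E_\infty$-ring, as in \cite{Lu}), we get
$$[X,Y]_{\X \wedge \mbp} \cong \prod_{\alpha \in A}\pi_{p_\alpha,q_\alpha}(Y).$$
On the right, $\Hom^{0,0}_{\F}(\pi_{**}(X),\pi_{**}(Y))$: since $\pi_{**}(X) \cong \bigoplus_{\alpha}\Sigma^{p_\alpha,q_\alpha}\F$, a bidegree-preserving $\F$-linear map is determined by where each generator goes, giving $\prod_{\alpha \in A}\pi_{p_\alpha,q_\alpha}(Y)$ as well. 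One then checks that the natural map $[X,Y]_{\X \wedge \mbp} \to \Hom^{0,0}_{\F}(\pi_{**}(X),\pi_{**}(Y))$ induced by the functor $\pi_{**}$ realizes exactly this identification on both sides, so it is an isomorphism. Strictly speaking one should verify naturality is enough to conclude without tracking bases, but since the comparison map is natural and we have explicitly matched the two products compatibly with the generators $\{x_\alpha\}$, this is routine.

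The only mild subtlety — and the step I would flag as requiring a line of care rather than being truly hard — is the passage from ``wedge'' to ``product'' on the target side: $\pi_{**}$ of an infinite wedge need not commute with the infinite product appearing in $\Hom^{0,0}_{\F}$, but here it does because the comparison is made after applying $[-,Y]_{\X \wedge \mbp}$, which sends the coproduct $\bigvee_\alpha$ to the product $\prod_\alpha$ honestly (it is a mapping-out functor into a fixed object), matching the product in the $\Hom$ of $\F$-vector spaces. No finiteness hypothesis is needed precisely because both sides are genuine products. Thus the result follows formally from Proposition \ref{gem2} together with the module adjunction, with no further input.
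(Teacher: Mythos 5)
Your proof is correct and follows essentially the same strategy as the paper's: both rest on Proposition \ref{gem2} to decompose cellular $\X \wedge \mbp$-modules into wedges of free modules, then identify both sides via the free--forgetful adjunction. The only difference is a minor economy: you decompose only $X$ and leave $Y$ untouched (computing both sides as $\prod_{\alpha}\pi_{p_\alpha,q_\alpha}(Y)$), whereas the paper applies Proposition \ref{gem2} to both $X$ and $Y$ before expanding everything; your variant is a little cleaner but not a different route.
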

\begin{proof}
It follows from Proposition \ref{gem2} that $X \cong \bigvee_{\alpha \in A} \Sigma^{p_{\alpha},q_{\alpha}} (\X \wedge \mbp)$ and $Y \cong \bigvee_{\beta \in B} \Sigma^{p_{\beta},q_{\beta}} (\X \wedge \mbp)$ for some sets $A$ and $B$. Then, we have that
\begin{align*}
[X,Y]_{\X \wedge \mbp} & \cong [\bigvee_{\alpha \in A} \Sigma^{p_{\alpha},q_{\alpha}} \s,\bigvee_{\beta \in B} \Sigma^{p_{\beta},q_{\beta}} (\X \wedge \mbp)] \\
& \cong \prod_{\alpha \in A} \bigoplus_{\beta \in B} \pi_{p_{\alpha}-p_{\beta},q_{\alpha}-q_{\beta}}(\X \wedge \mbp)\\
& \cong \prod_{\alpha \in A} \bigoplus_{\beta \in B} \Sigma^{p_{\alpha}-p_{\beta},q_{\alpha}-q_{\beta}}\F\\
& \cong \Hom^{0,0}_{\F}(\bigoplus_{\alpha \in A} \Sigma^{p_{\alpha},q_{\alpha}}\F,\bigoplus_{\beta \in B} \Sigma^{p_{\beta},q_{\beta}}\F)\\
& \cong \Hom ^{0,0}_{\F}(\pi_{**}(X),\pi_{**}(Y))
\end{align*}
which concludes the proof.
\end{proof}

The next theorem, which corresponds to \cite[Theorem 3.8]{GWX}, identifies $\X \wedge \mbp-\mo_{cell}$ with the category of bigraded $\F$-vector spaces that we denote by $\F-\mo_{**}$.

\begin{thm}\label{mbpcell}
	Let $k$ be a flexible field. Then, the functor 
	$$\pi_{**}:\X \wedge \mbp-\mo_{cell} \xrightarrow{\cong} \F-\mo_{**}$$
	is an equivalence of categories.
\end{thm}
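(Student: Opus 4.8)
The plan is to show that the functor $\pi_{**}$ admits a quasi-inverse by constructing a functor $F \colon \F-\mo_{**} \to \X \wedge \mbp-\mo_{cell}$ sending a bigraded $\F$-vector space $V_{**}$ with homogeneous basis $\{v_\alpha\}_{\alpha \in A}$, where $v_\alpha$ has bidegree $(q_\alpha)[p_\alpha]$, to the wedge $\bigvee_{\alpha \in A} \Sigma^{p_\alpha,q_\alpha}(\X \wedge \mbp)$. First I would note that $\pi_{**}$ is well-defined as a functor landing in $\F-\mo_{**}$: for any $Y$ in $\X \wedge \mbp-\mo_{cell}$, the homotopy groups $\pi_{**}(Y)$ form a module over $\pi_{**}(\X \wedge \mbp) \cong \F$ by Theorem \ref{hgmbp}, hence a bigraded $\F$-vector space. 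Then I would verify that $\pi_{**}$ is essentially surjective: given any object $V_{**}$ of $\F-\mo_{**}$, choose a homogeneous basis, form the corresponding wedge of shifted copies of $\X \wedge \mbp$ (which is cellular), and observe that its homotopy groups recover $V_{**}$ since $\pi_{**}(\X \wedge \mbp) \cong \F$.

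Next I would establish full faithfulness. This is where Corollary \ref{gem3} does the essential work. For objects $X$ and $Y$ in $\X \wedge \mbp-\mo_{cell}$, Proposition \ref{gem2} shows they are both wedges of shifted copies of $\X \wedge \mbp$, and Corollary \ref{gem3} gives a natural isomorphism
$$[X,Y]_{\X \wedge \mbp} \cong \Hom^{0,0}_{\F}(\pi_{**}(X),\pi_{**}(Y)).$$
The content of full faithfulness is precisely that this isomorphism is induced by the functor $\pi_{**}$ itself, i.e.\ that the composite of $\pi_{**} \colon [X,Y]_{\X \wedge \mbp} \to \Hom^{0,0}_{\F}(\pi_{**}(X),\pi_{**}(Y))$ agrees with the isomorphism of Corollary \ref{gem3}. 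I would check this by tracing through the proof of that corollary: the identifications there are all induced by applying $\pi_{**}$ to maps of wedges, so the abstract isomorphism is by construction the map induced by the homotopy groups functor. One should also be slightly careful that $\Hom^{0,0}_{\F}(\pi_{**}(X),\pi_{**}(Y))$ is exactly the hom-set in $\F-\mo_{**}$, which is immediate from the conventions fixed in Section 2.

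The main obstacle, modest as it is, will be the bookkeeping around infinite wedges and the product-versus-sum issue: in Corollary \ref{gem3} the hom-set is computed as $\prod_{\alpha}\bigoplus_{\beta}(\cdots)$, and one must be sure this matches $\Hom^{0,0}_{\F}$ of bigraded vector spaces, which it does since a bigraded homomorphism is determined by its values on a basis with no finiteness constraint imposed in that direction. No nontrivial convergence or higher-categorical input is needed beyond what Proposition \ref{check}, Proposition \ref{gem2}, and Corollary \ref{gem3} already provide; the theorem is essentially a repackaging of those three facts into the statement that $\pi_{**}$ is an equivalence. I would close by remarking that the inverse functor $F$ described above is then automatically an inverse equivalence, sending a bigraded $\F$-vector space to the corresponding wedge, with functoriality following from full faithfulness.
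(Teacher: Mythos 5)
Your proposal is correct and takes essentially the same approach as the paper, which simply says the theorem ``follows immediately from Proposition \ref{gem2} and Corollary \ref{gem3}''. You faithfully unpack what that terse statement means: essential surjectivity from the wedge decomposition of Proposition \ref{gem2} and full faithfulness from Corollary \ref{gem3}, with appropriate care that the latter isomorphism is indeed the one induced by $\pi_{**}$.
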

\begin{proof}
It follows immediately from Proposition \ref{gem2} and Corollary \ref{gem3}. 
\end{proof}

\begin{rem}
	\normalfont
	We want to highlight that the equivalence provided by the previous theorem is actually an equivalence of triangulated categories, where $\F-\mo_{**}$ is structured as a triangulated category in the obvious way. More precisely, the translation functor is the suspension $\Sigma^{1,0}$ and distinguished triangles are of the form
	$$V \xrightarrow{f} W \rightarrow coker(f) \oplus \Sigma^{1,0}ker(f) \rightarrow \Sigma^{1,0}V$$ 
	where $f$ is a morphism of bigraded $\F$-vector spaces.
\end{rem}

\section{The category of isotropic cellular spectra}

This section is devoted to the understanding of the structure of the category $\X-\mo_{cell}$ that is, as we have already noticed, the category of cellular isotropic spectra $\SH(k/k)_{cell}$. We give a nice algebraic description of this category based on the dual of the topological Steenrod algebra. The results here are the isotropic versions of the ones in \cite[Sections 4 and 5]{GWX}. Therefore, the proofs we provide are isotropic adaptations of the respective ones in \cite{GWX}.

In the next lemma, which corresponds to \cite[Lemma 5.1]{GWX}, we compute the $\mbp$-homology of isotropic $\mbp$-cellular spectra.

\begin{lem}\label{inj}
	Let $k$ be a flexible field. Then, for any $I \in \X \wedge \mbp-\mo_{cell}$ there is an isomorphism of left $\G_{**}$-comodules
	$$\mbp_{**}(I) \cong \G_{**} \otimes_{\F} \pi_{**}(I).$$
\end{lem}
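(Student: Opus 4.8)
The plan is to reduce the general cellular case to the free case by dévissage, using the equivalence $\pi_{**}:\X \wedge \mbp-\mo_{cell} \xrightarrow{\cong} \F-\mo_{**}$ from Theorem \ref{mbpcell}. First I would establish the statement for $I = \X \wedge \mbp$ itself: here $\mbp_{**}(\X \wedge \mbp) = \pi_{**}(\X \wedge \mbp \wedge \mbp) = \pi^{iso}_{**}(\mbp \wedge \mbp) \cong \G_{**}$ by Theorem \ref{hgmbp2}, while $\pi_{**}(\X \wedge \mbp) \cong \F$ by Theorem \ref{hgmbp}, so $\G_{**} \otimes_{\F} \pi_{**}(\X \wedge \mbp) \cong \G_{**}$, and one checks the $\G_{**}$-comodule structures agree (the coaction on the left-hand side is the one coming from $\mbp \wedge \mbp \to \mbp \wedge \X \wedge \mbp \wedge \mbp$, i.e. the standard $\mbp_{**}\mbp$-comodule structure restricted/projected to $\G_{**}$; the right-hand side carries the cofree/extended comodule structure on $\G_{**}\otimes_\F \pi_{**}(I)$).

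Next I would pass to arbitrary shifts and wedges: for $I = \bigvee_{\alpha} \Sigma^{p_\alpha,q_\alpha}(\X \wedge \mbp)$ both sides commute with suspension and with (possibly infinite) wedge sums, since $\mbp_{**}(-) = \pi_{**}(\mbp \wedge -)$ commutes with coproducts in $\X-\mo$ and $\G_{**}\otimes_\F(-)$ commutes with coproducts of $\F$-vector spaces. By Proposition \ref{gem2}, every object $I$ of $\X \wedge \mbp-\mo_{cell}$ is of exactly this form — it is a wedge of shifted copies of $\X \wedge \mbp$ indexed by an $\F$-basis of $\pi_{**}(I)$ — so in fact the statement already follows in full generality from the previous two steps, naturality in $I$ being a formal consequence of the functoriality of the construction $\{x_\alpha\} \mapsto \bigvee \Sigma^{p_\alpha,q_\alpha}(\X\wedge\mbp)$ established in Theorem \ref{mbpcell}. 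Concretely: choose an $\F$-basis $\{x_\alpha\}_{\alpha \in A}$ of $\pi_{**}(I)$ with $x_\alpha$ in bidegree $(q_\alpha)[p_\alpha]$, use Proposition \ref{gem2} to write $I \cong \bigvee_\alpha \Sigma^{p_\alpha,q_\alpha}(\X\wedge\mbp)$, and then
$$\mbp_{**}(I) \cong \bigoplus_\alpha \Sigma^{p_\alpha,q_\alpha}\mbp_{**}(\X\wedge\mbp) \cong \bigoplus_\alpha \Sigma^{p_\alpha,q_\alpha}\G_{**} \cong \G_{**}\otimes_\F \pi_{**}(I)$$
as graded $\F$-vector spaces.

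The step I expect to require the most care is not the additive bookkeeping but the verification that this is an isomorphism of $\G_{**}$-comodules, i.e. that under the identification $\mbp_{**}(\X\wedge\mbp) \cong \G_{**}$ the coaction is the extended/cofree one. For this I would argue that the map $\X \wedge \mbp \to \X \wedge \mbp \wedge \mbp$ (unit smashed with $\mbp$) splits the ring map, so on homotopy groups the coaction $\pi_{**}(\X\wedge\mbp\wedge\mbp) \to \pi_{**}(\X\wedge\mbp\wedge\mbp\wedge\mbp)$ is determined by the Hopf-algebroid coproduct on $\mbp_{**}\mbp$ after applying $\X\wedge-$; since $\pi^{iso}_{**}(\mbp) \cong \F$ the relevant coalgebroid collapses to the $\F$-coalgebra $\G_{**}$ with its standard (topological dual Steenrod-type) coproduct, and the resulting comodule on $\G_{**}$ itself is cofree over $\G_{**}$ — equivalently, one invokes that $\mbp \wedge \mbp$ is a split $\mbp$-module and quotes the analogue of \cite[Lemma 5.1]{GWX} with $\pi_{**}(\mbp)$ replaced by $\pi^{iso}_{**}(\mbp)\cong\F$. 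Naturality in $I$ then follows because every morphism in $\X\wedge\mbp-\mo_{cell}$ is, by Corollary \ref{gem3} and Theorem \ref{mbpcell}, induced by a map of the indexing $\F$-vector spaces, and both functors send such a map to $\G_{**}$ tensored with it.
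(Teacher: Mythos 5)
Your proposal is correct and follows essentially the same route as the paper: both use Proposition \ref{gem2}/Theorem \ref{mbpcell} to split $I$ as a wedge of shifted copies of $\X\wedge\mbp$, then apply Theorems \ref{hgmbp} and \ref{hgmbp2} to identify $\mbp_{**}(I)$ with $\G_{**}\otimes_\F \pi_{**}(I)$ via a direct sum computation. Your extra paragraph verifying that the isomorphism respects the $\G_{**}$-comodule structure is a welcome addition — the paper's proof only checks the additive isomorphism and leaves the comodule compatibility implicit.
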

\begin{proof}
	Since the motivic spectrum $I$ is by hypothesis in $\X \wedge \mbp-\mo_{cell}$, we deduce from Theorem \ref{mbpcell} that $I \cong \bigvee_{\alpha \in A} \Sigma^{p_{\alpha},q_{\alpha}}(\X \wedge \mbp)$ for some set $A$. Therefore, by Theorem \ref{hgmbp2} one has that
	\begin{align*}
	\mbp_{**}(I)=\pi_{**}(\mbp \wedge I) &\cong \pi_{**}(\bigvee_{\alpha \in A} \Sigma^{p_{\alpha},q_{\alpha}}(\X \wedge \mbp \wedge \mbp)) \\
	&\cong \bigoplus_{\alpha \in A} \Sigma^{p_{\alpha},q_{\alpha}}\pi_{**}(\X \wedge \mbp \wedge \mbp) \\
	&\cong \bigoplus_{\alpha \in A} \Sigma^{p_{\alpha},q_{\alpha}}\G_{**} \cong \G_{**}\otimes_{\F} V 
	\end{align*}
where $V\cong \bigoplus_{\alpha \in A} \Sigma^{p_{\alpha},q_{\alpha}} \F$. Now, note that by Theorem \ref{hgmbp}
$$\pi_{**}(I) \cong \bigoplus_{\alpha \in A}\Sigma^{p_{\alpha},q_{\alpha}}\pi_{**}(\X \wedge \mbp) \cong V.$$
It follows that	
$$\mbp_{**}(I) \cong \G_{**} \otimes_{\F} \pi_{**}(I)$$
that is what we aimed to show. 	
\end{proof}

The following lemma, which corresponds to \cite[Lemma 5.3]{GWX}, describes algebraically the hom-sets from isotropic cellular spectra to isotropic $\mbp$-cellular spectra.

\begin{lem}\label{injiso}
	Let $k$ be a flexible field. Then, for any $X \in \X-\mo_{cell}$ and $I \in \X \wedge \mbp-\mo_{cell}$ there is an isomorphism
	$$[X,I] \cong \Hom ^{0,0}_{\G_{**}}(\mbp_{**}(X),\mbp_{**}(I)).$$
\end{lem}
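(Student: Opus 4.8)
The plan is to reduce the computation to the module-level statement of Corollary \ref{gem3} by expressing both sides in terms of the category $\X \wedge \mbp-\mo_{cell}$. First I would use the standard $\mbp$-based (Adams–Novikov type) resolution of the sphere: there is a cosimplicial object $\X \wedge \mbp^{\wedge \bullet + 1}$ whose totalization recovers the $\mbp$-nilpotent completion of $\X$, and since by Theorem \ref{hgmbp} we have $\pi_{**}(\X \wedge \mbp) \cong \F$ (so $\X \wedge \mbp$ is already ``algebraic'' and the isotropic sphere $\X$ should be $\mbp$-complete, in analogy with \cite[Section 4]{GWX}), smashing $X$ with this resolution gives a tower of $\X \wedge \mbp$-cellular spectra whose limit is $X$. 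Dually, applying $[-,I]$ against a fixed $I \in \X \wedge \mbp-\mo_{cell}$ turns this into a conditionally convergent spectral sequence, but because of the collapse coming from $\pi_{**}(\X \wedge \mbp) \cong \F$ the tower should degenerate and yield directly that $[X,I]$ is computed by the cobar complex of the $\G_{**}$-comodule $\mbp_{**}(X)$.

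Concretely, the key steps I would carry out are: (1) identify, using Theorem \ref{mbpcell}, each term $\X \wedge \mbp^{\wedge s+1} \wedge X$ with an object of $\X \wedge \mbp-\mo_{cell}$, so that $[\X \wedge \mbp^{\wedge s+1} \wedge X, I]$ becomes, by Corollary \ref{gem3}, a group of bigraded $\F$-linear maps; (2) recognize, via Lemma \ref{inj} and Theorem \ref{hgmbp2}, that $\pi_{**}(\mbp^{\wedge s+1} \wedge X) \cong \G_{**}^{\otimes s} \otimes_{\F} \mbp_{**}(X)$, so that the cosimplicial abelian group obtained by applying $\pi_{**}$ to $\mbp \wedge \mbp^{\wedge \bullet} \wedge X$ is exactly the cobar construction computing $\Hom^{0,0}_{\G_{**}}(\mbp_{**}(X),\mbp_{**}(I))$ in cohomological degree $0$ (i.e. comodule maps) and vanishing in higher degrees since $\mbp_{**}(I)$ is an extended, hence injective, $\G_{**}$-comodule by Lemma \ref{inj}; (3) assemble the $\mbp$-based Adams spectral sequence for $[X,I]$ with $E_2^{s} = \Ext^{s}_{\G_{**}}(\mbp_{**}(X),\mbp_{**}(I))$, observe it is concentrated in filtration $s=0$, and conclude strong convergence to $[X,I]$, giving the claimed isomorphism; here I would also need that the completion map $X \to X^{\wedge}_{\X \wedge \mbp}$ is an equivalence on $\X$-cellular objects, which follows because $\mbp_{**}$ detects equivalences of cellular spectra (Proposition \ref{check} applied after smashing with $\mbp$, using that $\pi_{**}(\X \wedge \mbp) \cong \F$).

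The main obstacle I anticipate is step (3): establishing genuine convergence of the $\mbp$-based resolution for an arbitrary (possibly infinite) $\X$-cellular spectrum $X$, since $\SH(k/k)_{cell}$ contains objects built from infinitely many cells and the totalization tower need not converge without a connectivity or finiteness hypothesis. I would handle this exactly as in \cite[Section 4]{GWX}: first prove the statement for $X$ a (finite wedge of) shifted copies of $\X$, where everything is explicit, then extend to arbitrary cellular $X$ by a colimit/five-lemma argument, using that both $[-,I]$ and $\Hom^{0,0}_{\G_{**}}(\mbp_{**}(-),\mbp_{**}(I))$ send homotopy colimits of cellular spectra to the appropriate limits and that the $E_2$-term is concentrated in a single filtration (so there are no lim$^1$ obstructions). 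A secondary subtlety is checking that the comodule structure on $\mbp_{**}(X)$ induced geometrically matches the one used on the algebraic side, i.e. that the isomorphism of Lemma \ref{inj} is natural and compatible with the $\G_{**}$-coaction; this is a diagram-chase using Theorem \ref{hgmbp2} and the coalgebra structure recalled in Section 6.
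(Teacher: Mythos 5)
Your approach is correct in spirit but considerably more involved than the paper's, and the convergence concerns you flag as the main obstacle are real and would require genuine extra work (or the colimit/five-lemma workaround you sketch) to resolve. The paper sidesteps the entire resolution-and-spectral-sequence apparatus by exploiting the fact that $I$ is \emph{already} an $\X \wedge \mbp$-module: the free-forgetful adjunction gives immediately
$$[X,I] \cong [\X \wedge \mbp \wedge X, I]_{\X \wedge \mbp},$$
and then Corollary \ref{gem3} identifies this with $\Hom^{0,0}_{\F}(\pi_{**}(\X \wedge \mbp \wedge X),\pi_{**}(I))$. The cofree adjunction for $\G_{**}$-comodules then converts this to $\Hom^{0,0}_{\G_{**}}(\mbp_{**}(X), \G_{**}\otimes_{\F}\pi_{**}(I))$, and Lemma \ref{inj} identifies $\G_{**}\otimes_{\F}\pi_{**}(I) \cong \mbp_{**}(I)$. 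No Adams tower, no convergence, no cobar complex, no boundedness hypothesis on $X$.

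In effect, what you describe as the first stage of the $\mbp$-based Adams resolution collapsing to the whole answer is exactly the content of the adjunction, so you have the right underlying idea. But phrasing it as a collapsing spectral sequence introduces convergence issues that do not actually arise: you would need either boundedness of $X$ (which is not assumed here) or the workaround you sketch. Your side remark that ``$X \to X^{\wedge}_{\X\wedge\mbp}$ is an equivalence on $\X$-cellular objects, because $\mbp_{**}$ detects equivalences'' is also not justified by what is in the paper — Proposition \ref{check} says $\pi_{**}$ detects equivalences of cellular spectra, not $\mbp_{**}$, and indeed the paper's convergence result (stated later, Theorem following Definition \ref{dc}) only gives $Y^{\wedge}_{\hz}$ in the target and only under a boundedness hypothesis. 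The relevant completeness statement is that $I$, being an $\X\wedge\mbp$-module, is already $\mbp$-complete; no completeness of $X$ is required. If you replace the cosimplicial resolution by the single adjunction step, your argument becomes the paper's.
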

\begin{proof}
	By Theorem \ref{mbpcell} and Lemma \ref{inj}, we have the following sequence of isomorphisms
\begin{align*}
	[X,I] &\cong [\X \wedge \mbp \wedge X,I]_{\X \wedge \mbp} \\
	&\cong \Hom ^{0,0}_{\F}(\pi_{**}(\X \wedge \mbp \wedge X),\pi_{**}(I))\\
	&\cong \Hom ^{0,0}_{\G_{**}}(\pi_{**}(\X \wedge \mbp \wedge X),\G_{**} \otimes_{\F} \pi_{**}(I)) \\
	& \cong \Hom ^{0,0}_{\G_{**}}(\mbp_{**}(X),\mbp_{**}(I))
	\end{align*}
	which concludes the proof.	
\end{proof}

Before constructing the isotropic version of the Adams-Novikov spectral sequence we need the following lemma.

\begin{lem}\label{cas}
	Let $k$ be a flexible field and $Y$ an object in $\X-\mo$. Then, for any $s \geq 0$, there exist isomorphisms
	$$\mbp_{**}((\overline{\X \wedge \mbp})^{\wedge s} \wedge Y) \cong \Sigma^{-s,0} \overline{\G_{**}}^{\otimes s} \otimes_{\F} \mbp_{**}(Y)$$
	and
$$\mbp_{**}(\X \wedge \mbp\wedge (\overline{\X \wedge \mbp})^{\wedge s} \wedge Y) \cong \Sigma^{-s,0} \G_{**} \otimes_{\F} \overline{\G_{**}}^{\otimes s} \otimes_{\F} \mbp_{**}(Y).$$
\end{lem}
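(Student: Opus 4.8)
The plan is to proceed by induction on $s$, using the defining triangle
$$\overline{\X \wedge \mbp} \rightarrow \s \rightarrow \X \wedge \mbp \rightarrow \Sigma^{1,0}\overline{\X \wedge \mbp}$$
smashed with $(\overline{\X \wedge \mbp})^{\wedge s-1} \wedge Y$, together with the already-established computation $\pi^{iso}_{**}(\mbp \wedge \mbp) \cong \G_{**}$ from Theorem \ref{hgmbp2}. The base case $s=0$ is trivial: the first formula reads $\mbp_{**}(Y) \cong \mbp_{**}(Y)$ and the second reads $\mbp_{**}(\X \wedge \mbp \wedge Y) \cong \G_{**} \otimes_{\F} \mbp_{**}(Y)$, which one gets from the fact that $\hz \wedge \mbp$ (hence $\X \wedge \hz \wedge \mbp$, and by the same split-module argument $\X \wedge \mbp \wedge \mbp$) behaves well, or more directly from the K\"unneth-type splitting $\mbp_{**}(\X \wedge \mbp \wedge Y) \cong \pi_{**}(\X \wedge \mbp \wedge \mbp \wedge Y) \cong \pi_{**}(\X \wedge \mbp \wedge \mbp) \otimes_{\F} \pi_{**}(\X \wedge \mbp \wedge Y) / (\ldots)$; I would prefer to phrase this cleanly by noting $\mbp \wedge \mbp$ is a split $\mbp$-module over $\X$, so that $\mbp_{**}(\X \wedge \mbp \wedge Y) \cong \pi_{**}(\mbp \wedge \mbp) \otimes_{\pi_{**}(\mbp)} \pi_{**}(\mbp \wedge Y)$ after applying $\X \wedge -$, and $\pi^{iso}_{**}(\mbp) \cong \F$ by Theorem \ref{hgmbp} kills the relative tensor, leaving $\G_{**} \otimes_{\F} \mbp_{**}(Y)$.

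For the inductive step, smash the triangle above with $(\overline{\X \wedge \mbp})^{\wedge s-1} \wedge Y$ to get
$$(\overline{\X \wedge \mbp})^{\wedge s} \wedge Y \rightarrow (\overline{\X \wedge \mbp})^{\wedge s-1} \wedge Y \rightarrow \X \wedge \mbp \wedge (\overline{\X \wedge \mbp})^{\wedge s-1} \wedge Y \rightarrow \Sigma^{1,0}(\overline{\X \wedge \mbp})^{\wedge s} \wedge Y.$$
Applying $\mbp_{**}(-)$ and using the inductive hypotheses for the middle two terms, the connecting map is identified with $\mathrm{id} \otimes \eta_R \otimes \mathrm{id}$ (or rather with the map induced by the unit $\F \to \overline{\G_{**}} \oplus \F \cong \G_{**}$ up to the suspension bookkeeping), which is a split injection because everything is an $\F$-vector space. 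Hence the long exact sequence in $\mbp_{**}$ breaks into short exact sequences, and one reads off $\mbp_{**}((\overline{\X \wedge \mbp})^{\wedge s} \wedge Y)$ as the desingularized kernel, namely $\Sigma^{-1,0}$ of the cokernel of $\mbp_{**}((\overline{\X \wedge \mbp})^{\wedge s-1} \wedge Y) \to \mbp_{**}(\X \wedge \mbp \wedge (\overline{\X \wedge \mbp})^{\wedge s-1} \wedge Y)$; using the base-case splitting $\mbp_{**}(\X \wedge \mbp \wedge Z) \cong \G_{**} \otimes_{\F} \mbp_{**}(Z)$ applied to $Z = (\overline{\X \wedge \mbp})^{\wedge s-1} \wedge Y$, this cokernel is $\overline{\G_{**}} \otimes_{\F} \mbp_{**}((\overline{\X \wedge \mbp})^{\wedge s-1} \wedge Y) \cong \overline{\G_{**}} \otimes_{\F} \Sigma^{1-s,0}\overline{\G_{**}}^{\otimes s-1} \otimes_{\F} \mbp_{**}(Y)$, giving the first claimed formula after reindexing the suspension. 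The second formula then follows by applying the base-case splitting once more, now to $Z = (\overline{\X \wedge \mbp})^{\wedge s} \wedge Y$.

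The main obstacle, and the step I would be most careful about, is verifying that the connecting homomorphism in $\mbp_{**}$ is actually the split monomorphism $\mathrm{id} \otimes (\text{unit})$ rather than something more complicated — i.e. that the triangle induces \emph{short} exact sequences at each stage. This is the isotropic analogue of the standard fact that in the cobar/Adams--Novikov setup the resolution is a resolution; concretely it amounts to observing that the map $\s \to \X \wedge \mbp$ induces on $\mbp_{**}(-)$ the inclusion of $\mbp_{**}(Z)$ as the "$1 \otimes -$" summand of $\G_{**} \otimes_{\F} \mbp_{**}(Z)$, which is manifestly split injective. Once that is in place the rest is bookkeeping with suspension indices $\Sigma^{-s,0}$ and iterated tensor powers of $\overline{\G_{**}}$, and the induction closes. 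I would also remark that implicitly one uses that $\X \wedge \mbp$ is an $E_\infty$-ring (established earlier) so that all the smash products and module structures make sense, and that $\overline{\X \wedge \mbp}$ smashing commutes with the relevant colimits so the homology computations are valid termwise.
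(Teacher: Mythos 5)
Your argument is correct and follows essentially the same route as the paper's proof: establish the K\"unneth-type splitting $\mbp_{**}(\X \wedge \mbp \wedge Z) \cong \G_{**} \otimes_{\F} \mbp_{**}(Z)$ via the split-module observation (the paper attributes this to ``arguments similar to Lemma \ref{kun}''), then do the induction by applying $\mbp_{**}$ to the smashed defining triangle and observing the long exact sequence breaks into short exact sequences because the unit map $\s \to \X \wedge \mbp$ induces the split inclusion $\mbp_{**}(Z) \hookrightarrow \G_{**} \otimes_{\F} \mbp_{**}(Z)$. One small terminological slip: the map you identify as split injective is the middle map of the cofiber sequence (induced by the unit), not the connecting homomorphism — the connecting map is the one you want to be zero, which follows — but the substance of the argument is exactly right and matches the paper.
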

\begin{proof}
First, note that by arguments similar to the ones in Lemma \ref{kun} we have an isomorphism
$$\mbp_{**}(\X \wedge \mbp\wedge (\overline{\X \wedge \mbp})^{\wedge s} \wedge Y) \cong \G_{**} \otimes_{\F} \mbp_{**}((\overline{\X \wedge \mbp})^{\wedge s} \wedge Y) $$
for any isotropic spectrum $Y$ and any $s \geq 0$. So, we only need to prove the first part of the statement. We achieve this by an induction argument, after noting that obviously the statement holds for $s=0$.

Now, suppose the statement holds for $s-1$, i.e. 
$$\mbp_{**}( (\overline{\X \wedge \mbp})^{\wedge s-1} \wedge Y) \cong \Sigma^{1-s,0}  \overline{\G_{**}}^{\otimes s-1} \otimes_{\F} \mbp_{**}(Y)$$
and 
$$\mbp_{**}(\X \wedge \mbp\wedge (\overline{\X \wedge \mbp})^{\wedge s-1} \wedge Y) \cong \Sigma^{1-s,0} \G_{**} \otimes_{\F} \overline{\G_{**}}^{\otimes s-1} \otimes_{\F} \mbp_{**}(Y).$$
Then, the distinguished triangle in $\SH(k)$
$$(\overline{\X \wedge \mbp})^{\wedge s} \wedge Y \rightarrow (\overline{\X \wedge \mbp})^{\wedge s-1}\wedge Y \rightarrow \X \wedge \mbp\wedge (\overline{\X \wedge \mbp})^{\wedge s-1} \wedge Y\rightarrow \Sigma^{1,0}(\overline{\X \wedge \mbp})^{\wedge s}\wedge Y$$	
induces in $\mbp$-homology the short exact sequence
$$0 \rightarrow  \Sigma^{1-s,0}  \overline{\G_{**}}^{\otimes s-1} \otimes_{\F} \mbp_{**}(Y) \rightarrow \Sigma^{1-s,0} \G_{**} \otimes_{\F} \overline{\G_{**}}^{\otimes s-1} \otimes_{\F} \mbp_{**}(Y)$$
$$ \rightarrow \Sigma^{1,0} \mbp_{**}((\overline{\X \wedge \mbp})^{\wedge s} \wedge Y) \rightarrow 0.$$
It follows that 
$$\mbp_{**}((\overline{\X \wedge \mbp})^{\wedge s} \wedge Y) \cong \Sigma^{-s,0} \overline{\G_{**}}^{\otimes s} \otimes_{\F} \mbp_{**}(Y)$$
and
$$\mbp_{**}(\X \wedge \mbp\wedge (\overline{\X \wedge \mbp})^{\wedge s} \wedge Y) \cong \Sigma^{-s,0} \G_{**} \otimes_{\F} \overline{\G_{**}}^{\otimes s} \otimes_{\F} \mbp_{**}(Y)$$
which completes the proof. 
\end{proof}

We are now ready to construct the isotropic Adams-Novikov spectral sequence, which corresponds to \cite[Theorem 5.6]{GWX}. Before proceeding, we would like to fix some notations. 

\begin{dfn}\label{dc}
	\normalfont
Let $X$ be an isotropic spectrum. The Chow-Novikov degree of $\mbp_{p,q}(X)$ is the integer $p-2q$. We denote by $\X-\mo^b_{cell}$ the category of bounded isotropic cellular spectra, i.e. isotropic cellular spectra whose $\mbp$-homology is non-trivial only for a finite number of Chow-Novikov degrees.
\end{dfn}

\begin{thm}
		Let $k$ be a flexible field and $X$ and $Y$ objects in $\X-\mo^b_{cell}$. Then, there is a strongly convergent spectral sequence
		$$E_2^{s,t,u} \cong \Ext^{s,t,u}_{\G_{**}}(\mbp_{**}(X),\mbp_{**}(Y)) \Longrightarrow [\Sigma^{t-s,u}X,Y^{\wedge}_{\hz}].$$
\end{thm}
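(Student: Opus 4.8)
The plan is to run the standard Adams-type machinery for the $\X \wedge \mbp$-based resolution of $Y$, exactly mirroring the construction of the isotropic motivic Adams spectral sequence in Section 5 but with $\X \wedge \hz$ replaced by $\X \wedge \mbp$. First I would form the canonical $\X \wedge \mbp$-based Adams resolution of $Y$, i.e. the Postnikov-type tower built from the cofiber sequence $\overline{\X \wedge \mbp} \to \X \to \X \wedge \mbp$, apply $\pi_{**}(-)$ to obtain an unrolled exact couple, and let $\{E_r^{s,t,u}\}$ be the resulting spectral sequence abutting to $\pi_{**}$ of the $\X \wedge \mbp$-nilpotent completion of $Y$. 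Since $Y$ is an $\X$-module the argument of Proposition \ref{conv} (smashing the morphism of triangles with the idempotent $\X$) shows that the $\X \wedge \mbp$-nilpotent completion of $Y$ agrees with its $\mbp$-nilpotent completion, and then with its $\hz$-nilpotent completion once we check, via Theorem \ref{hgmbp}, that $\X \wedge \mbp$ and $\X \wedge \hz$ build the same nilpotent completions (or more directly: $Y^{\wedge}_{\hz}$ is already the relevant target because $\mbp$-based and $\hz$-based completions coincide for connective isotropic spectra, as they are all $2$-power torsion by Remark \ref{bach}). That identifies the abutment with $[\Sigma^{t-s,u}X, Y^{\wedge}_{\hz}]$ after mapping $X$ in.

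Next I would identify the $E_1$- and $E_2$-pages. Using Lemma \ref{cas}, the $s$-th term of the resolution has $\mbp$-homology $\Sigma^{-s,0}\G_{**} \otimes_{\F} \overline{\G_{**}}^{\otimes s} \otimes_{\F} \mbp_{**}(Y)$, so by Lemma \ref{injiso} applied to the $\X \wedge \mbp$-cellular spectra appearing in the tower (these are cellular because $Y \in \X-\mo^b_{cell}$ and cellularity is preserved by smashing with $\X \wedge \mbp$), the $E_1$-page is
$$E_1^{s,t,u} \cong \Hom^{t,u}_{\G_{**}}\big(\mbp_{**}(X), \Sigma^{-s,0}\G_{**} \otimes_{\F} \overline{\G_{**}}^{\otimes s} \otimes_{\F} \mbp_{**}(Y)\big),$$
which is precisely the cobar complex computing $\Ext_{\G_{**}}(\mbp_{**}(X),\mbp_{**}(Y))$. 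Checking that the $d_1$ differential agrees with the cobar differential is the usual diagram chase using Lemma \ref{cas} and the coalgebra structure of $\G_{**}$; hence $E_2^{s,t,u} \cong \Ext^{s,t,u}_{\G_{**}}(\mbp_{**}(X),\mbp_{**}(Y))$.

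Finally, for strong convergence I would invoke Proposition \ref{conv}'s criterion, i.e. the vanishing of $\varprojlim^1_r E_r^{s,t,u}$, and here the boundedness hypothesis $X, Y \in \X-\mo^b_{cell}$ does the work: since $\G_{**}$ is concentrated on the slope-$2$ (Chow-Novikov degree $0$) line, the Chow-Novikov degree of $\mbp_{**}$ of each spectrum in the tower is bounded in a range depending only on those of $X$ and $Y$, so for each fixed $(s,t,u)$ only finitely many $\Ext$-terms contribute and the Chow-Novikov grading forces the spectral sequence to degenerate in each tridegree, making it Mittag--Leffler; this gives the required $\varprojlim^1$-vanishing and hence strong convergence to $[\Sigma^{t-s,u}X, Y^{\wedge}_{\hz}]$. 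The main obstacle I anticipate is the bookkeeping around convergence: one must be careful that "bounded $\mbp$-homology" genuinely produces the Mittag--Leffler property for the spectral sequence computing the $\Hom$-groups (as opposed to a single $\mathrm{Tor}$/$\Ext$ group), which requires tracking the Chow-Novikov degrees through the whole resolution and combining this with the fact, from the cellular machinery of Section 4, that the tower's inverse limit is detected on homotopy groups.
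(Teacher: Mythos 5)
Your overall strategy is the paper's: build the $\X \wedge \mbp$-based Adams resolution of $Y$, map $X$ in, identify the $E_1$-page via Lemmas \ref{injiso} and \ref{cas} with the cobar complex computing $\Ext_{\G_{**}}(\mbp_{**}(X),\mbp_{**}(Y))$, and use the Chow--Novikov boundedness of $\mbp_{**}(X)$ and $\mbp_{**}(Y)$ to produce a horizontal vanishing line (if $\mbp_{**}(X)$ lies in Chow--Novikov range $[a,b]$ and $\mbp_{**}(Y)$ in $[c,d]$, the pages vanish outside $c-b+2u \le t \le d-a+2u$, so the $(r,r-1,0)$-degree differentials vanish for $r-1 > d-a-c+b$), giving Mittag--Leffler and hence strong convergence. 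One presentational note: you should apply $[\Sigma^{**}X,-]$ to the Postnikov tower from the outset rather than first applying $\pi_{**}$ and ``mapping $X$ in'' afterwards; the exact couple has to be formed in $[\Sigma^{**}X,-]$ for the $E_1$-identification via Lemma \ref{injiso} to make sense.

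The genuine gap is the identification of the abutment with $[\Sigma^{t-s,u}X, Y^{\wedge}_{\hz}]$, i.e. the step $Y^{\wedge}_{\X \wedge \mbp} \cong Y^{\wedge}_{\X \wedge \hz}$. Your idempotent-smashing argument correctly reduces the $\X \wedge E$-nilpotent completion of an $\X$-module to its $E$-nilpotent completion, but neither of your two proposed shortcuts bridges the remaining gap between $\mbp$- and $\hz$-completion: Theorem \ref{hgmbp} does not by itself imply the two nilpotent completions agree, and the $2$-power torsion fact of Remark \ref{bach} only controls the comparison between the $\hz$-completion and the $(2,\eta)$-completion (Remark \ref{abc}), not the comparison between $\mbp$ and $\hz$. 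What the paper uses is the bicompletion argument of \cite[Section~7.3]{DI}: compute $Y^{\wedge}_{\{\X \wedge \mbp,\X \wedge \hz\}}$ in two ways, using (i) that $\hz$ is an $\mbp$-module, so each $(\X \wedge \hz)^{\wedge n}\wedge Y$ is already $\X \wedge \mbp$-complete and the bicompletion collapses to $Y^{\wedge}_{\X \wedge \hz}$; and (ii) that $\X \wedge \mbp$ is $\hz$-complete (this is where Theorem \ref{hgmbp} enters), so each $(\X \wedge \mbp)^{\wedge n}\wedge Y$ is already $\X \wedge \hz$-complete and the bicompletion collapses to $Y^{\wedge}_{\X \wedge \mbp}$. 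You need both halves of this argument; citing Theorem \ref{hgmbp} alone skips the first half, and the torsion observation is not a substitute.
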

\begin{proof}
Consider the Postnikov system in $\X-\mo_{cell}$	
	$$
	\xymatrix{
		\dots \ar@{->}[r] &  (\overline{\X \wedge \mbp})^{\wedge s} \wedge Y \ar@{->}[r] \ar@{->}[d] &  \dots \ar@{->}[r]   & \overline{\X \wedge \mbp} \wedge Y \ar@{->}[r] \ar@{->}[d]	 & Y \ar@{->}[d]  \\
		 &	\X \wedge \mbp\wedge (\overline{\X \wedge \mbp})^{\wedge s} \wedge Y \ar@{->}[ul]^{[1]} & & \X \wedge \mbp \wedge \overline{\X \wedge \mbp} \wedge Y \ar@{->}[ul]^{[1]}  &	\X \wedge \mbp \wedge Y \ar@{->}[ul]^{[1]} 
	}
	$$
where $\overline{\X \wedge \mbp}$ is defined by the following distinguished triangle in $\SH(k)$
$$\overline{\X \wedge \mbp} \rightarrow \s \rightarrow \X \wedge \mbp \rightarrow \Sigma^{1,0} \overline{\X \wedge \mbp}.$$
If we apply the functor $[\Sigma^{**}X,-]$ we get an unrolled exact couple
$$
\xymatrix{
	  \dots \ar@{->}[r]   & [\Sigma^{**}X,\overline{\X \wedge \mbp} \wedge Y] \ar@{->}[r] \ar@{->}[d]	 & [\Sigma^{**}X,Y] \ar@{->}[d]  \\
  & [\Sigma^{**}X,\X \wedge \mbp \wedge \overline{\X \wedge \mbp} \wedge Y] \ar@{->}[ul]^{[1]}  &	[\Sigma^{**}X,\X \wedge \mbp \wedge Y] \ar@{->}[ul]^{[1]} 
}
$$	
that induces a spectral sequence with $E_1$-page given by
$$E_1^{s,t,u} \cong [\Sigma^{t-s,u}X,\X \wedge \mbp\wedge (\overline{\X \wedge \mbp})^{\wedge s} \wedge Y]$$
and first differential
$$d_1^{s,t,u}:E_1^{s,t,u} \rightarrow E_1^{s+1,t,u}.$$
This is what we call the isotropic Adams-Novikov spectral sequence. Note that by Lemmas \ref{injiso} and \ref{cas} the $E_1$-page has a nice description provided by
$$E_1^{s,t,u} \cong \Hom ^{t,u}_{\G_{**}}(\mbp_{**}(X),\G_{**} \otimes_{\F} \overline{\G_{**}}^{\otimes s} \otimes_{\F} \mbp_{**}(Y)).$$
Hence, the $E_2$-page has the usual description given in terms of $\Ext$-groups of left $\G_{**}$-comodules, i.e.
$$E_2^{s,t,u} \cong \Ext^{s,t,u}_{\G_{**}}(\mbp_{**}(X),\mbp_{**}(Y)).$$
By standard formal reasons, this spectral sequence actually converges to the groups $[\Sigma^{t-s,u}X,Y^{\wedge}_{\X \wedge \mbp}]$. We only have to notice that
$$Y^{\wedge}_{\X \wedge \mbp} \cong Y^{\wedge}_{\X \wedge \hz} \cong Y^{\wedge}_{\hz}.$$
The second isomorphism comes from the same argument of the proof of Proposition \ref{conv}. Regarding the first isomorphism, we may consider following \cite[Section 7.3]{DI} the bicompletion $Y^{\wedge}_{\{\X \wedge \mbp,\X \wedge \hz\}}$. This spectrum may be obtained by computing the homotopy limit of the following cosimplicial spectrum
$$\xymatrix{(\X \wedge \hz \wedge Y)^{\wedge}_{\X \wedge \mbp} \ar@<-.75ex>[r] \ar@<.75ex>[r] & ((\X \wedge \hz)^{\wedge 2} \wedge Y)^{\wedge}_{\X \wedge \mbp} \ar@<-.75ex>[r] \ar@<0ex>[r] \ar@<.75ex>[r] & ((\X \wedge \hz)^{\wedge 3} \wedge Y)^{\wedge}_{\X \wedge \mbp} \ar@<-.75ex>[r] \ar@<-.25ex>[r] \ar@<.25ex>[r] \ar@<.75ex>[r] & \dots}
$$
or, equivalently, by computing the homotopy limit of the following cosimplicial spectrum 
$$\xymatrix{(\X \wedge \mbp \wedge Y)^{\wedge}_{\X \wedge \hz} \ar@<-.75ex>[r] \ar@<.75ex>[r] & ((\X \wedge \mbp)^{\wedge 2} \wedge Y)^{\wedge}_{\X \wedge \hz} \ar@<-.75ex>[r] \ar@<0ex>[r] \ar@<.75ex>[r] & ((\X \wedge \mbp)^{\wedge 3} \wedge Y)^{\wedge}_{\X \wedge \hz} \ar@<-.75ex>[r] \ar@<-.25ex>[r] \ar@<.25ex>[r] \ar@<.75ex>[r] & \dots.}
$$
Since $\hz$ is a motivic $\mbp$-module we have that for any $n$
$$((\X \wedge \hz)^{\wedge n} \wedge Y)^{\wedge}_{\X \wedge \mbp} \cong (\X \wedge \hz)^{\wedge n} \wedge Y$$
from which it follows that the first homotopy limit is just $Y^{\wedge}_{\X \wedge \hz}$. On the other hand, we know that $\X \wedge \mbp$ is $\hz$-complete, thus we get that for any $n$
$$((\X \wedge \mbp)^{\wedge n} \wedge Y)^{\wedge}_{\X \wedge \hz} \cong (\X \wedge \mbp)^{\wedge n} \wedge Y$$ 
and the second homotopy limit gives back $Y^{\wedge}_{\X \wedge \mbp}$. This implies that $Y^{\wedge}_{\X \wedge \mbp} \cong Y^{\wedge}_{\X \wedge \hz}$.

It only remains to prove the strong convergence. The arguments are the same as in \cite[Theorem 3.2]{GWX} and we report them here only for completeness. First, suppose that $\mbp_{**}(X)$ is concentrated in Chow-Novikov degrees $[a,b]$ and $\mbp_{**}(Y)$ is concentrated in Chow-Novikov degrees $[c,d]$. Therefore, the $E_1$-page and so all the following pages are trivial outside the range $c-b+2u \leq t \leq d-a+2u$. Now, note that the differential on the $E_r$-page has, as usual, the tridegree $(r,r-1,0)$, which means in particular that it is trivial when $r-1 > d-a-c+b$. This amounts to say that the spectral sequence collapses at the $E_{d-a-c+b+2}$-page, and so it is strongly convergent, which completes the proof. 
\end{proof}

\begin{dfn}
	\normalfont
Let $\X-\mo_{cell,\hz}$ be the full triangulated subcategory of $\X-\mo_{cell}$ consisting of $\hz$-complete cellular isotropic spectra. Denote by $\X-\mo^{b,\geq 0}_{cell,\hz}$ the full subcategory of $\X-\mo^{b}_{cell,\hz}$ whose objects have $\mbp$-homology concentrated in non-negative Chow-Novikov degrees and by $\X-\mo^{b,\leq 0}_{cell,\hz}$ the full subcategory of $\X-\mo^{b}_{cell,\hz}$ whose objects have $\mbp$-homology concentrated in non-positive Chow-Novikov degrees. Finally, let $\X-\mo^{\heartsuit}_{cell,\hz}$ be the full subcategory whose objects are both in $\X-\mo^{b,\geq 0}_{cell,\hz}$ and in $\X-\mo^{b,\leq 0}_{cell,\hz}$, i.e. have $\mbp$-homology concentrated in Chow-Novikov degree $0$.
\end{dfn} 

We want to point out that, since $\X \wedge \hz$ is a $\X \wedge \mbp$-module and $\X \wedge \mbp$ is $\X \wedge \hz$-complete, the subcategories of $\hz$-complete and $\mbp$-complete isotropic spectra coincide. 

The next corollary, which corresponds to \cite[Corollary 4.7]{GWX}, computes hom-sets from $\X-\mo^{b,\geq 0}_{cell,\hz}$ to $\X-\mo^{b,\leq 0}_{cell,\hz}$ in algebraic terms.

\begin{cor}\label{hom}
	Let $k$ be a flexible field, $X$ an object in $\X-\mo^{b,\geq 0}_{cell,\hz}$ and $Y$ in $\X-\mo^{b,\leq 0}_{cell,\hz}$. Then, the functor $\mbp_{**}$ provides an isomorphism
	$$[X,Y] \cong \Hom ^{0,0}_{\G_{**}}(\mbp_{**}(X),\mbp_{**}(Y)).$$
\end{cor}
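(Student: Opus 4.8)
The plan is to run the isotropic Adams--Novikov spectral sequence of the previous theorem for the pair $(X,Y)$ and to show that, under the degree hypotheses, it collapses onto the single line $s=0$ and identifies $[\Sigma^{t-s,u}X,Y^{\wedge}_{\hz}]$ with $\Ext^{0}_{\G_{**}} = \Hom^{0,0}_{\G_{**}}(\mbp_{**}(X),\mbp_{**}(Y))$ in the relevant tridegree. Since $X \in \X-\mo^{b,\geq 0}_{cell,\hz}$ and $Y \in \X-\mo^{b,\leq 0}_{cell,\hz}$ are in particular $\hz$-complete, we have $Y \cong Y^{\wedge}_{\hz}$, so the abutment of the spectral sequence is exactly $[\Sigma^{t-s,u}X,Y]$ and no completion correction is needed.

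First I would recall that $\mbp_{**}(X)$ is concentrated in non-negative Chow--Novikov degrees and $\mbp_{**}(Y)$ in non-positive Chow--Novikov degrees. The key point is that the Chow--Novikov degree is additive and that a $\G_{**}$-comodule morphism (and, more generally, any element of $\Ext^{s}_{\G_{**}}$) preserves, respectively shifts in a controlled way, this degree: in the cobar complex computing $\Ext^{s,t,u}_{\G_{**}}(\mbp_{**}(X),\mbp_{**}(Y))$ the relevant cochain groups involve $\G_{**} \otimes_{\F} \overline{\G_{**}}^{\otimes s} \otimes_{\F} \mbp_{**}(Y)$, and since $\G_{**}$ and $\overline{\G_{**}}$ sit on the slope-$2$ line (Chow--Novikov degree $0$) while $\mbp_{**}(X)$, $\mbp_{**}(Y)$ live in non-negative, resp.\ non-positive, Chow--Novikov degrees, a bidegree-preserving homomorphism out of $\mbp_{**}(X)$ into such a target can be non-zero only when the source and target Chow--Novikov degrees match, forcing everything into Chow--Novikov degree $0$. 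I would then argue that for $s>0$ the relevant $\Ext$-group vanishes: a non-trivial class in $\Ext^{s,t,u}_{\G_{**}}$ with $s>0$ would, after the internal-degree bookkeeping, require $\mbp_{**}(X)$ and $\mbp_{**}(Y)$ to overlap in a Chow--Novikov degree incompatible with one being $\geq 0$ and the other $\leq 0$ — precisely the mechanism already used in the strong-convergence argument of the preceding theorem, where the differentials are shown to vanish for degree reasons once the Chow--Novikov supports are on opposite sides of $0$.

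Concretely, the spectral sequence $E_2^{s,t,u} \cong \Ext^{s,t,u}_{\G_{**}}(\mbp_{**}(X),\mbp_{**}(Y)) \Rightarrow [\Sigma^{t-s,u}X,Y]$ has, by the above, $E_2^{s,t,u}=0$ for $s>0$ in the total degree of interest, hence collapses at $E_2$, and
$$[\Sigma^{t-s,u}X,Y] \cong E_2^{0,t,u} \cong \Ext^{0,t,u}_{\G_{**}}(\mbp_{**}(X),\mbp_{**}(Y)) = \Hom^{t,u}_{\G_{**}}(\mbp_{**}(X),\mbp_{**}(Y)).$$
Specializing to $t=s=0$, $u=0$ (i.e.\ looking at $[X,Y]$ itself) gives the claimed isomorphism $[X,Y] \cong \Hom^{0,0}_{\G_{**}}(\mbp_{**}(X),\mbp_{**}(Y))$, and one checks it is induced by $\mbp_{**}$ since the edge map of the Adams--Novikov spectral sequence is exactly the map sending a morphism to the induced $\G_{**}$-comodule map on $\mbp$-homology.

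The main obstacle I anticipate is the bookkeeping in the middle step: making the "degree reasons" fully precise, i.e.\ verifying that in the cobar complex the Chow--Novikov degree of a homogeneous cochain is genuinely the difference of the Chow--Novikov degrees of $\mbp_{**}(Y)$ and $\mbp_{**}(X)$ (using that $\G_{**}$ and $\overline{\G_{**}}$ contribute $0$), and deducing from the support hypotheses that this difference must be $0$ and that no positive cohomological degree can occur. This is the same computation underlying the collapse statement in the theorem just proved, so I would reuse that argument verbatim and only adapt the endpoints; everything else (the identification of $E_1$ via Lemmas~\ref{injiso} and~\ref{cas}, the identification of the abutment, the $\hz$-completeness of $X$ and $Y$) is formal.
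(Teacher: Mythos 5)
Your argument is correct and matches the paper's proof in all essentials: both run the isotropic Adams--Novikov spectral sequence for the pair $(X,Y)$, use $\hz$-completeness of $Y$ to identify the abutment, and exploit the Chow--Novikov degree restrictions to see that only the tridegree $(0,0,0)$ contributes, yielding $[X,Y]\cong E_2^{0,0,0}\cong\Hom^{0,0}_{\G_{**}}(\mbp_{**}(X),\mbp_{**}(Y))$. The only cosmetic difference is that the paper states the vanishing directly at the level of the $E_1$-page (the groups $E_1^{s,s,0}$, and the targets $E_1^{r,r-1,0}$ of the differentials out of $(0,0,0)$, are already zero for $s,r-1>0$), whereas you phrase it at the $E_2$-page; when you tidy up the ``collapses at $E_2$'' step, just make sure to also check the vanishing of $E_2^{r,r-1,0}$ for $r\geq 2$ (tridegrees with $t-s=-1$), which follows by the identical degree argument.
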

\begin{proof}
	As we have already pointed out, the $E_1$-page of the isotropic Adams-Novikov spectral sequence is given by
	$$E_1^{s,t,u} \cong \Hom ^{t,u}_{\G_{**}}(\mbp_{**}(X),\G_{**} \otimes_{\F} \overline{\G_{**}}^{\otimes s} \otimes_{\F} \mbp_{**}(Y)).$$
	Since we are interested in the group $[X,Y]$, the part of the $E_1$-page that is involved consists of the groups in tridegrees $(t,t,0)$. By hypothesis, $X$ is in $\X-\mo^{b,\geq 0}_{cell,\hz}$ while $Y$ is in $\X-\mo^{b,\leq 0}_{cell,\hz}$, which implies that, among these groups, only $E_1^{0,0,0}$ is non-trivial. Since in this tridegree all differentials from the second on are trivial by degree reasons, we have that
	$$[X,Y] \cong E_2^{0,0,0} \cong \Ext^{0,0,0}_{\G_{**}}(\mbp_{**}(X),\mbp_{**}(Y)) \cong  \Hom ^{0,0}_{\G_{**}}(\mbp_{**}(X),\mbp_{**}(Y))$$
	which completes the proof.
\end{proof}

By using the isotropic Adams-Novikov spectral sequence we also get the following corollary that corresponds to \cite[Corollary 4.8]{GWX} and is a generalisation of \cite[Theorem 5.7]{T}.

\begin{cor}\label{morcor}
Let $k$ be a flexible field and $X$ and $Y$ objects in $\X-\mo^{\heartsuit}_{cell,\hz}$. Then, there is an isomorphism
$$[\Sigma^{t,u}X,Y] \cong \Ext^{2u-t,2u,u}_{\G_{**}}(\mbp_{**}(X),\mbp_{**}(Y)).$$
\end{cor}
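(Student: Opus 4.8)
The plan is to deduce Corollary \ref{morcor} directly from the isotropic Adams-Novikov spectral sequence together with Corollary \ref{hom}, mimicking the passage from \cite[Corollary 4.7]{GWX} to \cite[Corollary 4.8]{GWX}. The key observation is that if $X$ and $Y$ are in $\X-\mo^{\heartsuit}_{cell,\hz}$, then their $\mbp$-homology is concentrated in Chow-Novikov degree $0$, so the spectral sequence computing $[\Sigma^{t-s,u}X,Y^{\wedge}_{\hz}]$ is concentrated on a single line. More precisely, since both $\mbp_{**}(X)$ and $\mbp_{**}(Y)$ live only in Chow-Novikov degree $0$, the computation in the proof of the previous theorem shows that the $E_1$-page (and hence every later page) is trivial outside the range $t = 2u$; thus for the group $[\Sigma^{t-s,u}X,Y]$ we only pick up contributions with $t-s = $ (the actual degree we want) and the single surviving internal bidegree forces $t = 2u$.

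First I would write $[\Sigma^{a,u}X,Y]$ as the abutment of the isotropic Adams-Novikov spectral sequence, using that $Y \cong Y^{\wedge}_{\hz}$ since $Y$ is $\hz$-complete by assumption. Setting $a = t-s$, the contributing $E_\infty^{s,t',u}$ are those with $t' - s = a$; by the concentration argument only $t' = 2u$ survives, so there is exactly one line $s = 2u - a$, i.e. $s = 2u - t + s$, which pins down $s$ in terms of the external degree. After reindexing (replacing the ``$t$'' appearing in the corollary statement by the total degree $t-s$ of the theorem's indexing), the only possibly non-zero group is $E_2^{2u-t,2u,u} \cong \Ext^{2u-t,2u,u}_{\G_{**}}(\mbp_{**}(X),\mbp_{**}(Y))$. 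Since the spectral sequence is concentrated on this single line, there are no differentials into or out of it, so the $E_2$-term equals the $E_\infty$-term, and there are no extension problems because at most one filtration quotient is nonzero. This yields $[\Sigma^{t,u}X,Y] \cong \Ext^{2u-t,2u,u}_{\G_{**}}(\mbp_{**}(X),\mbp_{**}(Y))$.

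The main obstacle, such as it is, will be bookkeeping with the three indices: making sure the Chow-Novikov concentration statement from the proof of the isotropic Adams-Novikov theorem (where $\mbp_{**}(X)$ concentrated in $[a,b]$ and $\mbp_{**}(Y)$ in $[c,d]$ forces $c - b + 2u \le t \le d - a + 2u$) specializes correctly to $a=b=c=d=0$, giving $t = 2u$ on the nose, and then translating between the theorem's indexing of the abutment as $[\Sigma^{t-s,u}X,Y^\wedge_{\hz}]$ and the corollary's indexing $[\Sigma^{t,u}X,Y]$. I would also remark that Corollary \ref{morcor} generalizes \cite[Theorem 5.7]{T}: taking $X = Y = \X$, which indeed lies in $\X-\mo^{\heartsuit}_{cell,\hz}$ since $\mbp_{**}(\X) \cong \G_{**}$ is concentrated in Chow-Novikov degree $0$ and $\X$ is $\hz$-complete, recovers $\pi_{t,u}(\X^{\wedge}_{\hz}) \cong \Ext^{2u-t,2u,u}_{\G_{**}}(\G_{**},\G_{**}) \cong \Ext^{2u-t,u}_{\A^*}(\F,\F)$, consistently with the earlier theorem. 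No genuinely hard input is needed beyond the spectral sequence and Corollary \ref{hom}; the argument is essentially formal once the single-line concentration is established.
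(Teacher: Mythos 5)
Your argument is correct and is essentially the same as the paper's: both you and the paper observe that the $E_2$-page of the isotropic Adams--Novikov spectral sequence is concentrated on the line $t=2u$ when $X$ and $Y$ lie in $\X-\mo^{\heartsuit}_{cell,\hz}$, so all higher differentials vanish for degree reasons, the spectral sequence collapses at $E_2$ with a single nonzero filtration quotient, and after identifying $Y$ with $Y^{\wedge}_{\hz}$ the abutment $[\Sigma^{t,u}X,Y]$ is isomorphic to $\Ext^{2u-t,2u,u}_{\G_{**}}(\mbp_{**}(X),\mbp_{**}(Y))$. The extra remark on specializing to $X=Y=\X$ to recover the earlier theorem is a nice sanity check but not part of the paper's argument.
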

\begin{proof}
This follows immediately by noticing that the differentials $d_r^{s,t,u}:E_r^{s,t,u} \rightarrow E_r^{s+r,t+r-1,u}$ of the isotropic Adams-Novikov spectral sequence are trivial for $r \geq 2$ since $E_2^{s,t,u}$ is trivial for $t \neq 2u$. Hence, the spectral sequence is strongly convergent
and collapses at the second page, from which we get that
$$[\Sigma^{t,u}X,Y] \cong E_2^{2u-t,2u,u} \cong \Ext^{2u-t,2u,u}_{\G_{**}}(\mbp_{**}(X),\mbp_{**}(Y))$$
that is what we wanted to show. 
\end{proof}

Before proceeding, we also need the following lemma which essentially corresponds to \cite[Lemma 4.10]{GWX}.

\begin{lem}\label{ext}
Let $k$ be a flexible field and $M$ a $\G_{**}$-comodule concentrated in Chow-Novikov degree 0 which is finitely generated as an $\F$-vector space. Then, there exists an object $X$ in $\X-\mo^{\heartsuit}_{cell,\hz}$ such that $M \cong \mbp_{**}(X)$.
\end{lem}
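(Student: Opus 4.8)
The plan is to argue by induction on $\dim_{\F}M$. For the base cases $\dim_{\F}M\le 1$: if $M=0$ take $X=0$, and if $M=\Sigma^{2q,q}\F$ is the (necessarily trivial) one-dimensional comodule in Chow-Novikov degree $0$, take $X=\Sigma^{2q,q}\X^{\wedge}_{\hz}$. To justify the latter I would first record the routine auxiliary fact that for a connective $Y$ in $\X-\mo_{cell}$ the canonical map $\mbp_{**}(Y)\to\mbp_{**}(Y^{\wedge}_{\hz})$ is an isomorphism: indeed $Y^{\wedge}_{\hz}\cong Y^{\wedge}_{\X\wedge\mbp}$ by the argument given in the proof of the theorem on the isotropic Adams--Novikov spectral sequence, and the $\mbp$-homology of the stages of the $\X\wedge\mbp$-adic tower of $Y$ is eventually constant in each bidegree, so $\mbp_{**}$ commutes with the relevant homotopy limit. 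Applying this to $Y=\X$ and using Theorem \ref{hgmbp} gives $\mbp_{**}(\X^{\wedge}_{\hz})\cong\F$, the trivial comodule concentrated in Chow-Novikov degree $0$; since $\X^{\wedge}_{\hz}$ is $\hz$-complete and cellular, it lies in $\X-\mo^{\heartsuit}_{cell,\hz}$.

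For the inductive step, suppose $\dim_{\F}M=n\ge 2$ and the statement holds in smaller dimension. Since $M$ is finite-dimensional and concentrated in Chow-Novikov degree $0$, its $\F$-linear dual $M^{\vee}$ is a finite-dimensional graded $\G^{**}$-module; as $\G^{**}$ is connected graded with unique graded-simple module $\F$, the socle of $M^{\vee}$ is non-trivial, so $M$ admits a one-dimensional quotient comodule, i.e.\ there is a short exact sequence of $\G_{**}$-comodules
$$0\longrightarrow M'\longrightarrow M\longrightarrow\Sigma^{2q,q}\F\longrightarrow 0$$
with $\dim_{\F}M'=n-1$, all terms in Chow-Novikov degree $0$. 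By the inductive hypothesis there are $X',X''$ in $\X-\mo^{\heartsuit}_{cell,\hz}$ with $\mbp_{**}(X')\cong M'$ and $\mbp_{**}(X'')\cong\Sigma^{2q,q}\F$. The extension class of the above sequence lies in $\Ext^{1,0,0}_{\G_{**}}(\mbp_{**}(X''),\mbp_{**}(X'))$, which by Corollary \ref{morcor} (taking $t=-1$, $u=0$) is identified with $[\Sigma^{-1,0}X'',X']=[X'',\Sigma^{1,0}X']$. Let $g\colon X''\to\Sigma^{1,0}X'$ be a map representing it and set $X=\mathrm{fib}(g)$, so there is a distinguished triangle $X'\to X\to X''\xrightarrow{g}\Sigma^{1,0}X'$.

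It then remains to check that $X$ has the required properties. It is cellular, being the fibre of a map between cellular objects, and $\hz$-complete, since the formation of fibres commutes with $\hz$-completion. Applying $\mbp_{**}$ to the triangle, the map $\mbp_{**}(g)\colon\mbp_{**}(X'')\to\Sigma^{1,0}\mbp_{**}(X')$ vanishes for degree reasons (source in Chow-Novikov degree $0$, target in Chow-Novikov degree $1$), so the long exact sequence collapses to a short exact sequence of $\G_{**}$-comodules
$$0\longrightarrow\mbp_{**}(X')\longrightarrow\mbp_{**}(X)\longrightarrow\mbp_{**}(X'')\longrightarrow 0$$
concentrated in Chow-Novikov degree $0$; in particular $X$ lies in $\X-\mo^{\heartsuit}_{cell,\hz}$. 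Finally, by the construction of the isomorphism of Corollary \ref{morcor} — concretely, the compatibility of the edge homomorphism of the isotropic Adams--Novikov spectral sequence with the formation of cofibre sequences — the extension class of this short exact sequence coincides with the class represented by $g$, namely the class of the original sequence. Hence $\mbp_{**}(X)\cong M$ as $\G_{**}$-comodules, which completes the induction.

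The hard part will be this last point: verifying that the abstract isomorphism $[\Sigma^{-1,0}X'',X']\cong\Ext^{1,0,0}_{\G_{**}}(\mbp_{**}(X''),\mbp_{**}(X'))$ produced by the spectral sequence is compatible with Yoneda extensions, so that the triangle $X'\to X\to X''\to\Sigma^{1,0}X'$ genuinely realises the chosen extension of comodules rather than merely some extension with the same end terms. This is a standard but slightly delicate naturality statement about the Adams--Novikov tower; by contrast, the auxiliary claim that $\mbp$-homology is insensitive to $\hz$-completion on connective cellular isotropic spectra, used in the base case, is routine.
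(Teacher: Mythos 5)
Your proposal is essentially the paper's own proof: the same induction building $M$ from one-dimensional pieces, the same realization of the extension class as a map via Corollary \ref{morcor}, and the same construction of $X$ (your fibre of $g$ is, after rotating the triangle, the paper's $Cone(f_i)$). The only cosmetic differences are that the paper cites Landweber \cite[Theorem 3.3]{L} for a composition series with one-dimensional subquotients where you argue directly via the socle of $M^{\vee}$ over the connected algebra $\G^{**}$, and the step you rightly flag as delicate --- that the realized short exact sequence has the chosen extension class --- is handled in the paper by invoking Ravenel's geometric boundary theorem \cite[Theorem 2.3.4]{Ra} together with the observation that the isotropic Adams-Novikov spectral sequence collapses at $E_2$, so that the Yoneda connecting homomorphism $g_{i*}$ coincides exactly (without higher-filtration indeterminacy) with the map $f_{i*}$ induced on homotopy.
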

\begin{proof}
Since by hypothesis $M$ is a finite-dimensional $\F$-vector space, by \cite[Theorem 3.3]{L} one has a finite filtration of subcomodules
$$0 \cong M_0 \subset M_1 \subset \dots \subset M_n \cong M$$
such that, for any $i$, the quotient $M_i/M_{i-1}$ is stably isomorphic to $\F$, i.e. $M_i/M_{i-1} \cong \Sigma^{2q_i,q_i} \F$ for some integer $q_i$. We want to prove the statement by induction on $i$. First, note that by Theorem \ref{hgmbp} the comodule $\Sigma^{2q_i,q_i} \F$ is the $\mbp$-homology of the isotropic spectrum $\Sigma^{2q_i,q_i} \X^{\wedge}_{\hz}$ for any $i$. Now, suppose that there exists an object $X_{i-1}$ in $\X-\mo^{\heartsuit}_{cell,\hz}$ such that $M_{i-1} \cong \mbp_{**}(X_{i-1})$. Then, the short exact sequence
$$0 \rightarrow M_{i-1} \rightarrow M_i
 \rightarrow \Sigma^{2q_i,q_i} \F \rightarrow 0$$
represents an element of $\Ext_{\G_{**}}^{1,0,0}(\Sigma^{2q_i,q_i} \F,M_{i-1})$, namely a morphism $f_i$ in $[\Sigma^{2q_i-1,q_i}\X^{\wedge}_{\hz},X_{i-1}]$ by Corollary \ref{morcor}. Let us define $X_i$ as $Cone(f_i)$. Then, we have a long exact sequence in $\mbp$-homology
$$\dots \rightarrow \Sigma^{2q_i-1,q_i} \F \xrightarrow{0} M_{i-1} \rightarrow \mbp_{**}(X_i) \rightarrow \Sigma^{2q_i,q_i} \F \xrightarrow{0} \Sigma^{1,0}M_{i-1} \rightarrow \dots .$$
Note that the connecting homomorphism 
$$g_{i*}:\Ext^{0,0,0}( \Sigma^{2q_i,q_i} \F, \Sigma^{2q_i,q_i} \F) \rightarrow \Ext^{1,0,0}( \Sigma^{2q_i,q_i} \F,M_{i-1})$$
described as the Yoneda product with the element $g_i$ of $\Ext_{\G_{**}}^{1,0,0}(\Sigma^{2q_i,q_i} \F,M_{i-1})$
corresponding to the short exact sequence
$$0 \rightarrow M_{i-1} \rightarrow \mbp_{**}(X_i) \rightarrow \Sigma^{2q_i,q_i} \F \rightarrow 0$$
converges to the map
$$f_{i*}: [\Sigma^{2q_i-1,q_i}\X^{\wedge}_{\hz} ,\Sigma^{2q_i-1,q_i}\X^{\wedge}_{\hz} ] \rightarrow [\Sigma^{2q_i-1,q_i}\X^{\wedge}_{\hz},X_{i-1}]$$
induced by $f_i$ in isotropic homotopy groups (see \cite[Theorem 2.3.4]{Ra}). By Corollary \ref{morcor} the isotropic Adams-Novikov spectral sequence collapses at the second page, so $g_{i*}=f_{i*}$. It follows that the extensions $g_i$ and $f_i$ coincide which implies that $\mbp_{**}(X_i) \cong M_i$, as we wanted to prove.
\end{proof}

The next result is the isotropic equivalent of \cite[Lemma 4.2]{GWX}.

\begin{lem}
Let $k$ be a flexible field and $X_{\alpha}$ be a filtered system in $\X-\mo^{\heartsuit}_{cell,\hz}$. Then, also the colimit ${\mathrm colim} \: X_{\alpha}$ in $\X-\mo_{cell}$ belongs to $\X-\mo^{\heartsuit}_{cell,\hz}$.	
	\end{lem}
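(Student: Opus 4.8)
The strategy is to reduce everything to a statement about $\mbp$-homology, using the tight dictionary between $\X-\mo^{\heartsuit}_{cell,\hz}$ and $\G_{**}$-comodules established in the preceding lemmas and corollaries. First I would record that the colimit $\mathrm{colim}\, X_{\alpha}$, being a filtered colimit of $\hz$-complete cellular spectra, is automatically cellular; the only two things to check are that it is $\hz$-complete and that its $\mbp$-homology is concentrated in Chow--Novikov degree $0$ (which also forces boundedness, since ``concentrated in a single degree'' is a fortiori ``concentrated in finitely many degrees''). Since $\mbp$ is a cellular spectrum and smashing commutes with filtered (homotopy) colimits, and since $\pi_{**}$ of a filtered colimit of cellular spectra is the colimit of the homotopy groups, we get
$$\mbp_{**}(\mathrm{colim}\, X_{\alpha}) \cong \mathrm{colim}\, \mbp_{**}(X_{\alpha}).$$
Each $\mbp_{**}(X_{\alpha})$ is concentrated in Chow--Novikov degree $0$ by hypothesis, and a filtered colimit of $\F$-vector spaces concentrated in a fixed degree stays concentrated in that degree; hence $\mbp_{**}(\mathrm{colim}\, X_{\alpha})$ is concentrated in Chow--Novikov degree $0$. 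This handles the ``$\heartsuit$'' part of the conclusion modulo completeness.

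Next I would address $\hz$-completeness of the colimit. This is the delicate point, since homotopy limits (and hence nilpotent completions) do not in general commute with filtered colimits. The plan is to exploit the very strong structural control we have in Chow--Novikov degree $0$: by Corollary \ref{hom} and Corollary \ref{morcor}, for objects in $\X-\mo^{\heartsuit}_{cell,\hz}$ all mapping groups are computed by $\Ext_{\G_{**}}$ of comodules concentrated in degree $0$, and in particular the relevant $\Ext$-groups vanish outside a single line, so the isotropic Adams--Novikov spectral sequence collapses and there is no room for completion to change anything in a bounded range. Concretely, I would argue that since each $X_{\alpha}$ is already $\hz$-complete and its $\mbp$-homology is bounded (a single degree), the $\hz$-completion map $\mathrm{colim}\, X_{\alpha} \to (\mathrm{colim}\, X_{\alpha})^{\wedge}_{\hz}$ induces an isomorphism on $\mbp_{**}$ (both sides have the same degree-$0$ $\mbp$-homology, using that $\mbp$-completion agrees with $\hz$-completion here and that $\mbp_{**}$ of a bounded cellular spectrum is unaffected by such completion — compare the identifications $Y^{\wedge}_{\X\wedge\mbp}\cong Y^{\wedge}_{\X\wedge\hz}\cong Y^{\wedge}_{\hz}$ in the proof of the isotropic Adams--Novikov theorem), and then invoke Proposition \ref{check} to upgrade this to an equivalence. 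Alternatively, one can build $\mathrm{colim}\, X_{\alpha}$ as $\mbp_{**}^{-1}$ applied to the colimit comodule via Lemma \ref{ext} (the colimit comodule is again concentrated in degree $0$, though possibly no longer finite-dimensional, so one passes to a filtered colimit of the finitely generated case), producing an object of $\X-\mo^{\heartsuit}_{cell,\hz}$ with the correct $\mbp$-homology, and then identifies it with $\mathrm{colim}\, X_{\alpha}$ by comparing $\mbp_{**}$.

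The main obstacle I anticipate is precisely the interaction of $\hz$-completion with filtered colimits: a priori one worries that $\mathrm{colim}\, X_{\alpha}$ could fail to be $\hz$-complete even though each $X_{\alpha}$ is. The way around it is that completeness only needs to be checked after restricting to the bounded/heart situation, where the Adams--Novikov spectral sequence is concentrated on a single line and collapses — so the tower defining the completion is essentially constant in the relevant range and the $\lim^1$-obstructions vanish. Once one has that $\mbp_{**}$ is unchanged by completion in this range, Proposition \ref{check} (detection of equivalences by homotopy groups on cellular spectra, applied after smashing with $\mbp$ using the equivalence of Theorem \ref{mbpcell}) finishes the argument. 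The remaining verifications — that filtered colimits preserve cellularity and commute with $\mbp_{**}$, and that a colimit of degree-$0$ comodules is degree-$0$ — are routine.
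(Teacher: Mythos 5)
The first half of your plan—that $\mbp_{**}$ commutes with filtered colimits of cellular spectra and so $\mbp_{**}(\mathrm{colim}\,X_{\alpha})$ is concentrated in Chow--Novikov degree $0$—is exactly right and matches the paper. The problem is in how you handle $\hz$-completeness. You propose to show the completion map $\mathrm{colim}\,X_{\alpha}\to(\mathrm{colim}\,X_{\alpha})^{\wedge}_{\hz}$ is an $\mbp_{**}$-isomorphism and then ``upgrade'' to an equivalence via Proposition \ref{check} ``applied after smashing with $\mbp$.'' But smashing with $\X\wedge\mbp$ and checking $\pi_{**}$ there only shows the map is an equivalence after smashing with $\X\wedge\mbp$; to descend to an equivalence of the original spectra you would need to already know both sides are $\mbp$-complete (equivalently $\hz$-complete), which for $\mathrm{colim}\,X_{\alpha}$ is precisely what is to be proved. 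So as written the argument is circular. Likewise, the alternative route ``build $\mathrm{colim}\,X_{\alpha}$ via Lemma \ref{ext} applied to the colimit comodule'' hides the same circularity, since passing from finitely generated comodules to all comodules by filtered colimits is exactly what this lemma is supposed to justify.

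The missing ingredient, and the point the paper's proof turns on, is an algebraic fact you never invoke: $\Ext_{\G_{**}}(\F,-)$ commutes with filtered colimits, because it can be computed from the cobar complex in the second variable and the cobar complex preserves filtered colimits. With this, one applies Corollary \ref{morcor} to each $X_{\alpha}$ (legitimately, since each is known to lie in the heart) and to $(\mathrm{colim}\,X_{\alpha})^{\wedge}_{\hz}$ (also in the heart, once the degree-$0$ statement for its $\mbp$-homology is established), and obtains a chain of natural isomorphisms
\begin{equation*}
\pi_{t,u}(\mathrm{colim}\,X_{\alpha}) \cong \mathrm{colim}\,\Ext^{2u-t,2u,u}_{\G_{**}}(\F,\mbp_{**}(X_{\alpha})) \cong \Ext^{2u-t,2u,u}_{\G_{**}}(\F,\mbp_{**}(\mathrm{colim}\,X_{\alpha})) \cong \pi_{t,u}\bigl((\mathrm{colim}\,X_{\alpha})^{\wedge}_{\hz}\bigr),
\end{equation*}
so the completion map is a $\pi_{**}$-isomorphism of cellular spectra and Proposition \ref{check} applies directly, with no detour through $\mbp$-modules. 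Your instinct that the collapse of the isotropic Adams--Novikov spectral sequence is what makes everything work is correct, but you need to convert that into a statement about $\pi_{**}$ (via the cobar-complex commutation with filtered colimits) rather than about $\mbp_{**}$.
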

\begin{proof}
First, note that, since $\mbp_{**}({\mathrm colim} \: X_{\alpha}) \cong \mathrm{colim} \: \mbp_{**}(X_{\alpha})$, ${\mathrm colim} \: X_{\alpha}$ has $\mbp$-homology concentrated in Chow-Novikov degree 0. Moreover, recall from \cite[Corollary A1.2.12]{Ra} that $\Ext_{\G_{**}}(\F,-)$ may be computed as the homology of the cobar complex for the second variable. Since the cobar complex preserves filtered colimits, so does $\Ext_{\G_{**}}(\F,-)$. Then, Corollary \ref{morcor} implies that
\begin{align*}
	\pi_{t,u}({\mathrm colim} \: X_{\alpha}) &\cong {\mathrm colim} \: \pi_{t,u}(X_{\alpha})\\
	& \cong {\mathrm colim}\: \Ext^{2u-t,2u,u}_{\G_{**}}(\F,\mbp_{**}(X_{\alpha}))\\
	& \cong \Ext^{2u-t,2u,u}_{\G_{**}}(\F,{\mathrm colim} \: \mbp_{**}(X_{\alpha}))\\
	& \cong \Ext^{2u-t,2u,u}_{\G_{**}}(\F, \mbp_{**}({\mathrm colim} \: X_{\alpha}))\\
	& \cong \pi_{t,u}(({\mathrm colim} \: X_{\alpha})^{\wedge}_{\hz}) 
	\end{align*}
from which it follows that ${\mathrm colim} \: X_{\alpha}$ is $\hz$-complete that concludes the proof. 
\end{proof}

We are now ready to identify $\X-\mo^{\heartsuit}_{cell,\hz}$ with the abelian category of left $\G_{**}$-comodules concentrated in Chow-Novikov degree 0 that we denote by $\G_{**}-\com^0_{**}$. The following proposition is an isotropic version of \cite[Proposition 4.11]{GWX}.

\begin{prop}\label{heart}
	Let $k$ be a flexible field. Then, the functor 
	$$\mbp_{**}:\X-\mo^{\heartsuit}_{cell,\hz} \xrightarrow{\cong} \G_{**}-\com^0_{**}$$
	is an equivalence of categories.
\end{prop}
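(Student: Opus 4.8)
The plan is to show that the functor $\mbp_{**}$ on $\X-\mo^{\heartsuit}_{cell,\hz}$ is fully faithful and essentially surjective, which together give the claimed equivalence. First I would observe that the functor is well-defined: by construction, objects of $\X-\mo^{\heartsuit}_{cell,\hz}$ have $\mbp$-homology concentrated in Chow-Novikov degree $0$, so $\mbp_{**}$ lands in $\G_{**}-\com^0_{**}$; here one uses Lemma \ref{inj} to see that $\mbp_{**}(X)$ is indeed a $\G_{**}$-comodule, and the comodule structure is natural in $X$. That this functor is additive and exact (in the sense of sending cofiber sequences with $\mbp$-homology concentrated in degree $0$ to short exact sequences of comodules) follows from the long exact sequence in $\mbp$-homology.

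For full faithfulness, the key input is Corollary \ref{hom}: for $X$ in $\X-\mo^{b,\geq 0}_{cell,\hz}$ and $Y$ in $\X-\mo^{b,\leq 0}_{cell,\hz}$ the functor $\mbp_{**}$ induces an isomorphism $[X,Y] \cong \Hom^{0,0}_{\G_{**}}(\mbp_{**}(X),\mbp_{**}(Y))$. Since $\X-\mo^{\heartsuit}_{cell,\hz}$ is by definition the intersection $\X-\mo^{b,\geq 0}_{cell,\hz} \cap \X-\mo^{b,\leq 0}_{cell,\hz}$, any pair of objects in the heart satisfies the hypotheses of Corollary \ref{hom}, and one gets precisely that $\mbp_{**}$ is a bijection on hom-sets. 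On the comodule side, a morphism of comodules concentrated in Chow-Novikov degree $0$ is exactly a bidegree-$(0,0)$ comodule homomorphism, so $\Hom^{0,0}_{\G_{**}}(\mbp_{**}(X),\mbp_{**}(Y)) = \Hom_{\G_{**}}(\mbp_{**}(X),\mbp_{**}(Y))$ computed in $\G_{**}-\com^0_{**}$; this identifies the target of $\mbp_{**}$ on hom-sets with the hom-set in the comodule category, giving full faithfulness.

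For essential surjectivity, I would split into two cases according to whether the target comodule is finitely generated over $\F$. If $M$ in $\G_{**}-\com^0_{**}$ is finite-dimensional as an $\F$-vector space, Lemma \ref{ext} directly produces an object $X$ in $\X-\mo^{\heartsuit}_{cell,\hz}$ with $\mbp_{**}(X) \cong M$. For a general comodule $M$, I would write $M$ as the filtered colimit of its finite-dimensional subcomodules $M_{\alpha}$ (every comodule over a coalgebra is the union of its finite-dimensional subcomodules), realise each $M_{\alpha}$ as $\mbp_{**}(X_{\alpha})$ via Lemma \ref{ext}, lift the inclusions $M_{\alpha} \hookrightarrow M_{\beta}$ to maps $X_{\alpha} \to X_{\beta}$ using full faithfulness, and then form $X = \mathrm{colim}\, X_{\alpha}$ in $\X-\mo_{cell}$. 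By the preceding lemma, this colimit lies in $\X-\mo^{\heartsuit}_{cell,\hz}$, and since $\mbp_{**}$ commutes with filtered colimits one has $\mbp_{**}(X) \cong \mathrm{colim}\, \mbp_{**}(X_{\alpha}) \cong \mathrm{colim}\, M_{\alpha} \cong M$. This completes essential surjectivity and hence the proof.

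I expect the main subtlety to lie in the colimit step of essential surjectivity: one must check that the maps $X_{\alpha} \to X_{\beta}$ can be chosen coherently (so as to form an honest filtered diagram, not just a pseudo-diagram), which is where one genuinely uses that $\mbp_{**}$ is fully faithful on the heart to transport the coherence of the diagram $(M_{\alpha})$ of comodules; and one must confirm that the colimit is taken in a sense compatible with $\mbp_{**}$ preserving it, i.e. that $\mbp$-homology commutes with the relevant filtered colimits in $\X-\mo_{cell}$. Once these points are settled, the argument is a formal assembly of Corollary \ref{hom}, Lemma \ref{ext}, and the colimit lemma.
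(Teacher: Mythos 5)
Your proposal is correct and follows essentially the same route as the paper: full faithfulness via Corollary \ref{hom}, realisation of finite-dimensional comodules via Lemma \ref{ext}, and essential surjectivity in general by writing an arbitrary comodule as a filtered colimit of finite-dimensional subcomodules (Hovey's result, cited in the paper as \cite[Propositions 1.4.10, 1.4.4 and 1.4.1]{Ho}), realising these, and invoking the preceding lemma to see that the colimit stays in the heart. Your remark about choosing the transition maps $X_{\alpha} \to X_{\beta}$ coherently is a good point to make explicit — the paper glosses over it, but it is handled automatically because $\mbp_{**}$ restricted to the heart is a fully faithful functor between ordinary (1-)categories, so the diagram $(M_\alpha)$ lifts uniquely up to isomorphism.
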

\begin{proof}
	First, note that Corollary \ref{hom} guarantees that the functor $\mbp_{**}$ is fully faithful. We just need to show that it is essentially surjective. Recall from \cite[Propositions 1.4.10, 1.4.4 and 1.4.1]{Ho} that any left $\G_{**}$-comodule $M$ is a filtered colimit of comodules $M_{\alpha}$ which are finitely generated as $\F$-vector spaces. By Lemma \ref{ext} all $M_{\alpha}$ are expressible as $\mbp_{**}(X_{\alpha})$ for some $X_{\alpha}$ in $\X-\mo^{\heartsuit}_{cell,\hz}$. Therefore, we have that $M \cong \mbp_{**}(X)$ where $X={\mathrm colim} \: X_{\alpha}$, which is what we aimed to show. 
\end{proof}

\begin{rem}
	\normalfont
	Note that $\G_{**}-\com^0_{**}$ is equivalent to the category of left $\A_{*}$-comodules, where $\A_{*}$ is the dual of the topological Steenrod algebra. Hence, the previous result can be rephrased by saying that $\X-\mo^{\heartsuit}_{cell,\hz}$ is equivalent to the abelian category of left $\A_*$-comodules.
\end{rem}

The next proposition that corresponds to \cite[Proposition 4.12]{GWX} provides $\X-\mo^b_{cell,\hz}$ with a $t$-structure.

\begin{prop}\label{tri}
Let $k$ be a flexible field. Then, the pair $(\X-\mo^{b,\geq 0}_{cell,\hz},\X-\mo^{b,\leq 0}_{cell,\hz})$ defines a bounded $t$-structure on $\X-\mo^b_{cell,\hz}$.
\end{prop}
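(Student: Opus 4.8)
The plan is to verify the standard axioms of a bounded $t$-structure (see \cite{Lu} or \cite[Chapitre 1]{BBD}) for the pair $(\X-\mo^{b,\geq 0}_{cell,\hz},\X-\mo^{b,\leq 0}_{cell,\hz})$, exploiting the fact that on $\X-\mo^b_{cell,\hz}$ the $\mbp$-homology functor $\mbp_{**}$ detects everything. First I would record the elementary observations: both subcategories are closed under the appropriate shifts ($\Sigma^{1,0}$ shifts Chow-Novikov degree by $+1$, so $\X-\mo^{b,\geq 0}_{cell,\hz}$ is closed under $\Sigma^{1,0}$ and $\X-\mo^{b,\leq 0}_{cell,\hz}$ under $\Sigma^{-1,0}$), they are additive and closed under retracts, and they are both non-trivial. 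Boundedness is immediate from Definition \ref{dc}: any object of $\X-\mo^b_{cell,\hz}$ has $\mbp$-homology in finitely many Chow-Novikov degrees, hence lies in $\Sigma^{n,0}\X-\mo^{b,\geq 0}_{cell,\hz}\cap \Sigma^{-m,0}\X-\mo^{b,\leq 0}_{cell,\hz}$ for suitable $m,n$.

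The orthogonality axiom, namely $[X,\Sigma^{-1,0}Y]=0$ for $X\in\X-\mo^{b,\geq 0}_{cell,\hz}$ and $Y\in\X-\mo^{b,\leq 0}_{cell,\hz}$, is where Corollary \ref{hom} (or rather its proof via the isotropic Adams-Novikov spectral sequence) does the work. If $X$ has $\mbp$-homology in non-negative Chow-Novikov degrees and $Y$ in non-positive ones, then $\Sigma^{-1,0}Y$ has $\mbp$-homology in Chow-Novikov degrees $\leq -1$; running the isotropic Adams-Novikov spectral sequence computing $[X,\Sigma^{-1,0}Y]$ and tracking the Chow-Novikov bounds exactly as in the proof of Corollary \ref{hom}, one sees that the relevant $E_1$-groups in tridegrees $(t,t,0)$ all vanish, since $\Hom^{0,0}_{\G_{**}}$ between a comodule concentrated in degrees $\geq 0$ and one in degrees $\leq -1$ is zero. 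Hence $[X,\Sigma^{-1,0}Y]=0$.

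The remaining and main point is the existence of truncation triangles: for every $X\in\X-\mo^b_{cell,\hz}$ there is a distinguished triangle $\tau_{\geq 0}X\to X\to \tau_{<0}X\to\Sigma^{1,0}\tau_{\geq 0}X$ with $\tau_{\geq 0}X\in\X-\mo^{b,\geq 0}_{cell,\hz}$ and $\tau_{<0}X\in\Sigma^{-1,0}\X-\mo^{b,\leq 0}_{cell,\hz}$. Here I would argue by induction on the number of Chow-Novikov degrees in which $\mbp_{**}(X)$ is supported. The base case (single degree) is handled after a shift by Proposition \ref{heart}, which identifies the single-degree part with $\G_{**}-\com^0_{**}$, an abelian category. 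For the inductive step, let $n$ be the smallest Chow-Novikov degree occurring in $\mbp_{**}(X)$; the bottom comodule $\mbp_{n+2q,q}(X)$ (in degree $n$) defines an object $Z\in\Sigma^{n,0}\X-\mo^{\heartsuit}_{cell,\hz}$ by Proposition \ref{heart}, together with a map $X\to Z$ realizing the projection onto the bottom degree — this map exists because $[X,Z]\cong\Hom^{0,0}_{\G_{**}}(\mbp_{**}(X),\mbp_{**}(Z))$ by Corollary \ref{hom} once $n\leq 0$, and in general after shifting so that the target is in degree $0$. Taking the fiber $X'$ of $X\to Z$, one checks via the long exact sequence in $\mbp$-homology that $\mbp_{**}(X')$ is supported in strictly fewer Chow-Novikov degrees (it kills the bottom one), so the inductive hypothesis applies to an appropriate shift of $X'$, and splicing the resulting triangle with the fiber triangle $X'\to X\to Z$ produces the desired truncation. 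The main obstacle is exactly this construction of the bottom-degree projection map and the verification that it is compatible across the filtration — i.e. checking that the map realizing the algebraic projection $\mbp_{**}(X)\twoheadrightarrow(\text{bottom degree})$ exists and is functorial enough to iterate; this is precisely where one must be careful about which shifts land in the range where Corollary \ref{hom} applies, and one may need to combine it with the full strength of the isotropic Adams-Novikov spectral sequence rather than just its edge. Finally, uniqueness of the truncation triangles is automatic from the orthogonality established above, completing the verification of the $t$-structure axioms.
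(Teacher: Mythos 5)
Your proof is correct and the key inputs (Corollary \ref{hom} for orthogonality, the ``project onto a single Chow-Novikov degree'' move for truncations, Proposition \ref{heart} for realising comodules in the heart) are exactly the ones the paper uses. The route differs in one respect: the paper does \emph{not} carry out the full induction that you describe. Instead, it constructs only the one bottom-degree truncation step --- for $X \in \X-\mo^{b,\geq 0}_{cell,\hz}$ it produces, via Corollary \ref{hom}, a map $f\colon X \to X_0$ with $X_0 \in \X-\mo^{\heartsuit}_{cell,\hz}$ realising the projection $\mbp_{**}(X)\to\mbp_{**}(X)_0$, notes that $\Sigma^{-1,0}\mathrm{Cone}(f)$ lies in $\X-\mo^{b,\geq 1}_{cell,\hz}$, and then invokes \cite[Proposition 3.6]{GWX}, a ready-made recognition criterion that upgrades this single filtration step (together with orthogonality, boundedness, and closure under shifts and extensions) to a full bounded $t$-structure. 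Your inductive splicing of fiber sequences is precisely what that citation packages away, so the two arguments agree in substance; the citation just saves you from having to run the octahedral-axiom bookkeeping by hand. One small simplification you could make on your own terms: the orthogonality step needs no re-examination of the $E_1$-page of the Adams-Novikov spectral sequence --- since $\Sigma^{-1,0}Y$ already lies in $\X-\mo^{b,\leq 0}_{cell,\hz}$, Corollary \ref{hom} applies verbatim and gives $[X,\Sigma^{-1,0}Y]\cong\Hom^{0,0}_{\G_{**}}(\mbp_{**}(X),\mbp_{**}(\Sigma^{-1,0}Y))=0$ directly; and similarly the hedging about ``full strength of the spectral sequence'' in the truncation step is unnecessary, because after the shift by $\Sigma^{-n,0}$ you are exactly in the hypotheses of Corollary \ref{hom}, whose statement already gives the required lift of the algebraic projection.
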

\begin{proof}
	Just by the definition of $\X-\mo^{b,\geq 0}_{cell,\hz}$ and $\X-\mo^{b,\leq 0}_{cell,\hz}$ we know that the first is closed under suspensions, the second under desuspensions and both under extensions. Moreover, clearly 
	$$\X-\mo^{b}_{cell,\hz} = \bigcup_{n \in \Z} \X-\mo^{b,\geq n}_{cell,\hz}$$
	where $\X-\mo^{b,\geq n}_{cell,\hz}$ is the $n$-th suspension of $\X-\mo^{b,\geq 0}_{cell,\hz}$. Now, consider an object $X$ in $\X-\mo^{b,\geq 0}_{cell,\hz}$ and an object $Y$ in $\X-\mo^{b,\leq -1}_{cell,\hz}$, i.e. the first desuspension of $\X-\mo^{b,\leq 0}_{cell,\hz}$. Then, by Corollary \ref{hom}
	$$[X,Y] \cong \Hom ^{0,0}_{\G_{**}}(\mbp_{**}(X),\mbp_{**}(Y)) \cong 0$$
	since $\mbp_{**}(X)$ is concentrated in non-negative Chow-Novikov degrees while $\mbp_{**}(Y)$ is concentrated in negative Chow-Novikov degrees. Finally, let $X$ be an object in $\X-\mo^{b,\geq 0}_{cell,\hz}$, then $\mbp(X)$ is concentrated in non-negative Chow-Novikov degrees. Consider the projection $\mbp(X) \rightarrow \mbp(X)_0$ that kills all the elements in positive Chow-Novikov degrees, and note that there exists an object $X_0$ in $\X-\mo^{\heartsuit}_{cell,\hz}$ such that $\mbp(X_0) \cong \mbp(X)_0$. Now, by Corollary \ref{hom} this morphism comes from a map $f:X \rightarrow X_0$ such that $\Sigma^{-1,0}Cone(f)$ belongs to $\X-\mo^{b,\geq 1}_{cell,\hz}$. Therefore, by \cite[Proposition 3.6]{GWX}, the pair $(\X-\mo^{b,\geq 0}_{cell,\hz},\X-\mo^{b,\leq 0}_{cell,\hz})$ defines a bounded $t$-structure on $\X-\mo^b_{cell,\hz}$ that is what we aimed to prove. 
\end{proof}

We are now ready to prove the main result of this section that corresponds to \cite[Theorem 4.13]{GWX}. In this theorem we identify $\X-\mo^b_{cell,\hz}$ with the derived category of left $\G_{**}$-comodules concentrated in Chow-Novikov degree 0.

\begin{thm}\label{main}
	Let $k$ be a flexible field. Then, there exists a $t$-exact equivalence of stable $\infty$-categories
	$$\D^b(\G_{**}-\com^0_{**}) \xrightarrow{\cong} \X-\mo^b_{cell,\hz}.$$
\end{thm}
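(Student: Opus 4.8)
The plan is to produce a $t$-exact exact functor $\Phi\colon \D^b(\G_{**}-\com^0_{**}) \to \X-\mo^b_{cell,\hz}$ and then show it is an equivalence, checking full faithfulness on the heart and essential surjectivity via boundedness of the $t$-structure. The structural inputs are already available: Proposition \ref{heart} identifies the heart $\X-\mo^{\heartsuit}_{cell,\hz}$ with the abelian category $\G_{**}-\com^0_{**}$ (equivalently, with left $\A_*$-comodules) via $\mbp_{**}$, and Proposition \ref{tri} shows $(\X-\mo^{b,\geq 0}_{cell,\hz},\X-\mo^{b,\leq 0}_{cell,\hz})$ is a bounded $t$-structure on $\X-\mo^b_{cell,\hz}$. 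Moreover $\G_{**}-\com^0_{**}$ has enough injectives, namely the cofree comodules $\G_{**}\otimes_{\F}V$; by Lemma \ref{ext} together with the colimit lemma above, each of these lifts along $\mbp_{**}$ to an object of $\X-\mo^{\heartsuit}_{cell,\hz}$ --- concretely a filtered colimit of wedges $\bigvee_{\alpha}\Sigma^{2q_{\alpha},q_{\alpha}}(\X\wedge\mbp)$, which has cofree $\mbp$-homology by Theorem \ref{hgmbp2}.

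For the construction of $\Phi$ I would follow \cite[Section 4]{GWX} and invoke Lurie's description of the derived $\infty$-category of an abelian category with enough injectives \cite{Lu}: composing the inverse of the equivalence of Proposition \ref{heart} with the inclusion of the heart into $\X-\mo^b_{cell,\hz}$ gives an exact functor into a stable $\infty$-category equipped with a bounded $t$-structure, which extends essentially uniquely to a $t$-exact functor $\Phi$ on $\D^b(\G_{**}-\com^0_{**})$. Concretely $\Phi$ sends a bounded complex of comodules to the totalization of the complex of isotropic spectra obtained by lifting a bounded-below injective resolution termwise (which is possible by the previous paragraph) and lifting the differentials by means of Corollary \ref{hom}; the subtle point is that this totalization lands in $\X-\mo^b_{cell,\hz}$, which holds because $\mbp_{**}$ of the totalization computes the cohomology of the original bounded complex --- hence is concentrated in finitely many Chow-Novikov degrees --- and because each brutal truncation is $\hz$-complete, so the homotopy limit remains $\hz$-complete. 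The $t$-exactness of $\Phi$ is then immediate, since the $t$-structure on the target is read off from the Chow-Novikov degrees of $\mbp$-homology and $\mbp_{**}$ is homological.

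To see that $\Phi$ is fully faithful, note that $\D^b(\G_{**}-\com^0_{**})$ is generated under shifts and finite colimits by its heart, and $\Phi$ preserves both; hence it suffices to prove that $\Phi$ induces equivalences on mapping spectra between objects $M,N$ of the heart. On homotopy groups this is the assertion that the natural map $\Ext^n_{\G_{**}-\com^0_{**}}(M,N)\to[\Sigma^{-n,0}\Phi M,\Phi N]$ is an isomorphism for all $n\geq 0$ (both sides vanishing for $n<0$). By Corollary \ref{morcor}, applied with $t=-n$ and $u=0$, the right-hand side equals $\Ext^{n,0,0}_{\G_{**}}(\mbp_{**}(\Phi M),\mbp_{**}(\Phi N))=\Ext^{n,0,0}_{\G_{**}}(M,N)$, and this trigraded $\Ext$ in internal bidegree $(0,0)$ between Chow-Novikov degree $0$ comodules coincides with the abelian $\Ext$-group of $\G_{**}-\com^0_{**}\simeq\A_*-\com$, since an injective resolution in the latter is precisely an internal-bidegree-homogeneous resolution by cofree $\G_{**}$-comodules computing the former (the case $n=0$ being Corollary \ref{hom}). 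Finally, $\Phi$ is essentially surjective: its essential image is a stable subcategory containing the heart (Proposition \ref{heart}), and since the $t$-structure on $\X-\mo^b_{cell,\hz}$ is bounded, every object is a finite iterated extension of shifted heart objects, hence lies in the essential image.

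The step I expect to be the main obstacle is the coherent construction of $\Phi$ as a functor of stable $\infty$-categories --- not merely of triangulated categories --- together with the verification that totalizations of lifted bounded-below injective resolutions remain inside the bounded $\hz$-complete cellular subcategory. Granting the $\infty$-categorical derived-category formalism of \cite{Lu} as adapted in \cite[Section 4]{GWX}, everything else is bookkeeping with the $\Ext$-identifications already established in Corollaries \ref{hom} and \ref{morcor}.
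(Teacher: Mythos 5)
Your proposal is correct in structure and relies on exactly the same ingredients as the paper: Propositions \ref{heart} and \ref{tri} for the bounded $t$-structure whose heart is $\G_{**}-\com^0_{**}$, Lurie's derived-category machinery as adapted in \cite{GWX}, and the Adams--Novikov spectral sequence of Section 8. The paper, however, takes a shorter route: rather than constructing $\Phi$ and verifying full faithfulness and essential surjectivity separately, it checks only the single hypothesis of the recognition criterion \cite[Proposition 2.12]{GWX} (built on Lurie's \cite[Proposition 1.3.3.7]{Lu}). Concretely, it notes that for $X,Y$ in the heart with $\mbp_{**}(Y)$ an injective $\G_{**}$-comodule, the Adams--Novikov $E_2$-page is concentrated in $s=0$ and $\Hom^{-i,0}_{\G_{**}}$ vanishes for $i>0$ between Chow--Novikov-degree-$0$ comodules, so $[\Sigma^{-i}X,Y]=0$ for $i>0$; the criterion then supplies the $t$-exact equivalence directly. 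Your plan in effect unpacks and reproves that criterion, and the one step that needs tightening is the full-faithfulness check: Corollary \ref{morcor} tells you that $[\Sigma^{-n,0}\Phi M,\Phi N]$ and $\Ext^n_{\A_*-\com}(M,N)$ are \emph{abstractly} isomorphic, but you must additionally verify that the natural comparison map induced by $\Phi$ realizes this isomorphism. The cleanest way to do so is precisely what the recognition criterion packages: the comparison is an isomorphism for $n=0$ (Corollary \ref{hom}), both sides vanish for $n>0$ when $N$ is injective, and then universality of $\Ext$ as a $\delta$-functor finishes the argument --- which brings you back to the vanishing computation that is the only thing the paper checks.
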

\begin{proof}
	First, note that, by Propositions \ref{heart} and \ref{tri}, $(\X-\mo^{b,\geq 0}_{cell,\hz},\X-\mo^{b,\leq 0}_{cell,\hz})$ defines a bounded $t$-structure on $\X-\mo^b_{cell,\hz}$ whose heart is equivalent to the category of left $\G_{**}$-comodules concentrated in Chow-Novikov degree 0, so has enough injectives. Now, let $X$ and $Y$ be objects in $\X-\mo^{\heartsuit}_{cell,\hz}$ such that $\mbp_{**}(Y)$ is an injective $\G_{**}$-comodule. Then, in this case the isotropic Adams-Novikov spectral sequence 
	$$E_2^{s,t,u} \cong \Ext^{s,t,u}_{\G_{**}}(\mbp_{**}(X),\mbp_{**}(Y)) \Longrightarrow [\Sigma^{t-s,u}X,Y]$$
	collapses at the second page since the $E_2$-page is trivial for $s \neq 0$. Hence, we have that
	$$[\Sigma^{-i}X,Y] \cong \Ext^{0,-i,0}_{\G_{**}}(\mbp_{**}(X),\mbp_{**}(Y)) \cong \Hom ^{-i,0}_{\G_{**}}(\mbp_{**}(X),\mbp_{**}(Y)) \cong 0$$
for any $i>0$ since both $\mbp_{**}(X)$ and $\mbp_{**}(Y)$ are concentrated in Chow-Novikov degree $0$. It follows by \cite[Proposition 2.12]{GWX}, which is based on Lurie's recognition criterion \cite[Proposition 1.3.3.7]{Lu}, that there exists a $t$-exact equivalence of stable $\infty$-categories
$$\D^b(\G_{**}-\com^0_{**}) \xrightarrow{\cong} \X-\mo^b_{cell,\hz}$$
extending the equivalence on the hearts, which completes the proof.	  
\end{proof}

\begin{rem}
	\normalfont
	We point out that, given the identification $\G_{**} \cong \A_*$, the last theorem identifies as triangulated categories the category of bounded isotropic $\hz$-complete cellular spectra with the derived category of left $\A_*$-comodules, namely $\D^b(\A_{*}-\com_*).$
\end{rem}

By using the same argument as in \cite[Corollary 1.2]{GWX} one is able to obtain an unbounded version of the previous theorem identifying the whole $\X^{\wedge}_{\hz}-\mo_{cell}$ with Hovey's unbounded derived category $\st(\G_{**}-\com^0_{**})$, which is the same as  $\st(\A_{*}-\com_{*})$ (see \cite[Section 6]{Ho}).

\begin{cor}
	Let $k$ be a flexible field. Then, there exists an equivalence of stable $\infty$-categories
	$$ \X^{\wedge}_{\hz}-\mo_{cell} \cong \st(\G_{**}-\com^0_{**}).$$
\end{cor}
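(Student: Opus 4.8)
The plan is to deduce the unbounded equivalence from the bounded one in Theorem~\ref{main} by the same filtered-colimit argument used in \cite[Corollary 1.2]{GWX}. First I would observe that both sides are compactly generated stable $\infty$-categories: on the topological side, $\st(\G_{**}-\com^0_{**})$ is Hovey's stable model structure on unbounded complexes of $\G_{**}$-comodules concentrated in Chow-Novikov degree $0$ (equivalently $\A_*$-comodules, see \cite[Section 6]{Ho}), and its compact objects are generated under shifts and retracts by the finitely generated comodules of Lemma~\ref{ext}. On the isotropic side, $\X^{\wedge}_{\hz}-\mo_{cell}$ is generated under colimits by the bounded objects, and in fact by the objects $\Sigma^{p,q}\X^{\wedge}_{\hz}$. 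The key point is that Theorem~\ref{main} already provides a $t$-exact equivalence
$$\D^b(\G_{**}-\com^0_{**}) \xrightarrow{\cong} \X-\mo^b_{cell,\hz}$$
on the bounded subcategories, and these bounded subcategories are exactly the subcategories of compact objects on each side.

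Next I would pass to $\ind$-completions. Since a compactly generated stable $\infty$-category is recovered as the $\ind$-completion of its full subcategory of compact objects (\cite[Proposition 1.4.4.1]{Lu} or the analogous statement in \cite{Ho} on the algebraic side), applying $\ind(-)$ to the equivalence of Theorem~\ref{main} yields an equivalence
$$\ind(\D^b(\G_{**}-\com^0_{**})) \xrightarrow{\cong} \ind(\X-\mo^b_{cell,\hz}).$$
It then remains to identify each $\ind$-completion with the category we want. On the topological side, $\ind(\D^b(\G_{**}-\com^0_{**})) \cong \st(\G_{**}-\com^0_{**})$ because Hovey's unbounded derived category is compactly generated by the bounded complexes of finitely generated comodules; this is \cite[Section 6]{Ho}, and is exactly the identification used in \cite[Corollary 1.2]{GWX}. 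On the isotropic side, I would show $\ind(\X-\mo^b_{cell,\hz}) \cong \X^{\wedge}_{\hz}-\mo_{cell}$: every $\hz$-complete cellular isotropic spectrum is a filtered colimit of bounded ones (each $\X^{\wedge}_{\hz}$-cellular module is a wedge of spheres $\Sigma^{p,q}\X^{\wedge}_{\hz}$ by the proof of Theorem~\ref{mbpcell}-style arguments, hence a filtered colimit of finite wedges, which are bounded), and the previous lemma guarantees that filtered colimits of heart objects stay $\hz$-complete, so the filtered colimit stays in $\X^{\wedge}_{\hz}-\mo_{cell}$; conversely each bounded object is compact in $\X^{\wedge}_{\hz}-\mo_{cell}$ since hom-sets out of it are computed by the strongly convergent, second-page-collapsing spectral sequence of Corollary~\ref{hom}, which commutes with the relevant filtered colimits.

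The main obstacle I expect is the bookkeeping around completeness and compactness in the passage to the unbounded setting: one must check that the $\hz$-completion functor behaves well with respect to the filtered colimits used, and that compactness of the bounded generators in $\X^{\wedge}_{\hz}-\mo_{cell}$ is genuine (i.e. that $[X,-]$ commutes with filtered colimits for $X$ bounded), which is where the degree-boundedness in Definition~\ref{dc} and the collapse of the isotropic Adams--Novikov spectral sequence are essential. Once these are in place, the equivalence is formal from \cite[Corollary 1.2]{GWX}, and combined with the identification $\G_{**} \cong \A_*$ one obtains $\X^{\wedge}_{\hz}-\mo_{cell} \cong \st(\A_{*}-\com_*)$ as claimed.
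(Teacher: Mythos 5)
The overall strategy of passing to $\ind$-completions is the right one and follows \cite[Corollary 1.2]{GWX}, but there is a genuine error in the claim that ``these bounded subcategories are exactly the subcategories of compact objects on each side.''  The notion of ``bounded'' in Definition~\ref{dc} only asks that $\mbp$-homology be non-trivial in finitely many Chow--Novikov \emph{degrees}; it places no finiteness hypothesis within each degree.  An object $X$ in $\X-\mo^{\heartsuit}_{cell,\hz}$ with $\mbp_{**}(X)$ an infinite-dimensional $\G_{**}$-comodule is bounded but not compact: by Corollary~\ref{morcor}, $[X,-]$ is computed by $\Ext_{\G_{**}}(\mbp_{**}(X),-)$, which does not commute with filtered colimits when $\mbp_{**}(X)$ is not finitely generated.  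The same issue appears on the algebraic side: $\D^b(\G_{**}-\com^0_{**})$ contains bounded complexes of \emph{arbitrary} comodules, while the compact objects of $\st(\G_{**}-\com^0_{**})$ form the thick subcategory generated by the finitely generated ones.  Because of this, $\ind(\D^b(\G_{**}-\com^0_{**}))$ is strictly larger than $\st(\G_{**}-\com^0_{**})$: the $\ind$-construction freely adjoins filtered colimits, and for non-compact objects of $\D^b$ these formal colimits do not agree with the colimits that already exist in $\st$.

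The fix is to run the argument on the genuine compact subcategories, which sit properly inside the bounded subcategories.  On the isotropic side, the compacts of $\X^{\wedge}_{\hz}-\mo_{cell}$ are the thick subcategory generated by the $\Sigma^{p,q}\X^{\wedge}_{\hz}$; their $\mbp$-homology is finitely generated, so they lie in $\X-\mo^{b}_{cell,\hz}$.  On the algebraic side, the compacts of $\st(\G_{**}-\com^0_{**})$ are the thick subcategory generated by $\F$, i.e.~the finitely generated comodules of Lemma~\ref{ext}.  One must check that the equivalence of Theorem~\ref{main} identifies these two thick subcategories — which it does, since both are the thick subcategory generated by the tensor unit, and the equivalence preserves the unit — and only then take $\ind(-)$.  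The material you need is all present in your write-up (you correctly identify the compact generators), but the two notions are conflated in a way that would make the $\ind$-completion land in the wrong place.
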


\section{The category of isotropic Tate motives}

We finish in this section by applying previous results in order to obtain information on the category of isotropic Tate motives $\DM(k/k)_{Tate}$. In particular, we get an easy algebraic description for the hom-sets in $\DM(k/k)_{Tate}$ between motives of isotropic cellular spectra.

First, we prove the following lemma which tells us that the isotropic motivic homology of an isotropic spectrum is always a free $H_{**}(k/k)$-module.

\begin{lem}\label{hiso}
	Let $k$ be a flexible field and $X$ an object in $\X-\mo$. Then, there exists an isomorphism of left $H_{**}(k/k)$-modules
	$$H^{iso}_{**}(X) \cong H_{**}(k/k) \otimes_{\F} \mbp_{**}(X).$$
\end{lem}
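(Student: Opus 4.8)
The plan is to lift the computation through the motivic Brown-Peterson spectrum, using that $\hz$ is a motivic $\mbp$-module and that $\pi^{iso}_{**}(\mbp) \cong \F$ by Theorem \ref{hgmbp}. Since $\hz$ carries an $\mbp$-module structure, $\X \wedge \hz$ is a module over the motivic $E_{\infty}$-ring spectrum $\X \wedge \mbp$, and for any $X$ in $\X-\mo$ there is a natural equivalence
$$\X \wedge \hz \wedge X \cong (\X \wedge \hz) \wedge_{\X \wedge \mbp} (\X \wedge \mbp \wedge X).$$
This is purely formal: using $X \cong \X \wedge X$ and the idempotency of $\X$ in $\SH(k)$, the right-hand side rewrites as $(\X \wedge \hz) \wedge_{\X \wedge \mbp} (\X \wedge \mbp) \wedge_{\X} X \cong (\X \wedge \hz) \wedge_{\X} X \cong \X \wedge \hz \wedge X$.

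The real input I would need is that $\X \wedge \hz$ is a \emph{cellular} $\X \wedge \mbp$-module. This should follow from the fact that $\hz$ is obtained from $\mgl_{(2)}$, and hence from $\mbp = \mgl_{(2)}/x$, by further killing a regular sequence generating the positive part of the coefficient ring (the motivic Hopkins-Morel presentation): such a quotient is a filtered colimit of finite cell $\mbp$-modules, so smashing with $\X$ places $\X \wedge \hz$ in $\X \wedge \mbp-\mo_{cell}$. Granting this, Theorem \ref{mbpcell} and Proposition \ref{gem2} apply: since $\pi_{**}(\X \wedge \mbp) \cong \F$ and $\pi_{**}(\X \wedge \hz) \cong H_{**}(k/k)$ is an $\F$-vector space, we obtain an isomorphism of $\X \wedge \mbp$-modules $\X \wedge \hz \cong \bigvee_{\alpha \in A} \Sigma^{p_{\alpha},q_{\alpha}} (\X \wedge \mbp)$ for a family of bidegrees with $\bigoplus_{\alpha \in A} \Sigma^{p_{\alpha},q_{\alpha}} \F \cong H_{**}(k/k)$. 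I expect this cellularity claim to be the main obstacle; once it is in place, everything else is bookkeeping.

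To finish I would base-change the splitting along $- \wedge_{\X \wedge \mbp} (\X \wedge \mbp \wedge X)$, which turns a wedge of suspensions of $\X \wedge \mbp$ into the corresponding wedge of suspensions of $\X \wedge \mbp \wedge X$. Combined with the equivalence of the first paragraph this gives $\X \wedge \hz \wedge X \cong \bigvee_{\alpha \in A} \Sigma^{p_{\alpha},q_{\alpha}} (\X \wedge \mbp \wedge X)$, whence
$$H^{iso}_{**}(X) = \pi_{**}(\X \wedge \hz \wedge X) \cong \bigoplus_{\alpha \in A} \Sigma^{p_{\alpha},q_{\alpha}} \mbp_{**}(X) \cong H_{**}(k/k) \otimes_{\F} \mbp_{**}(X).$$
Equivalently, one can run the $\Tor$-spectral sequence of Proposition \ref{ss} with $E = \X \wedge \mbp$, $M = \X \wedge \hz$ and $N = \X \wedge \mbp \wedge X$: since $\F$ is a field all higher $\Tor$-groups vanish, the spectral sequence collapses at $E^2 \cong H_{**}(k/k) \otimes_{\F} \mbp_{**}(X)$, and we read off the same isomorphism. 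Finally, the left $H_{**}(k/k)$-module structure on $H^{iso}_{**}(X)$ comes from the $\hz$-factor, which under the splitting acts on the left tensor factor, so the isomorphism is $H_{**}(k/k)$-linear, as claimed.
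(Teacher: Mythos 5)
Your proposal is correct and takes essentially the same route as the paper: both invoke the Hopkins-Morel presentation to show $\hz$ is an $\mbp$-cellular module, then use Theorem \ref{mbpcell} (resp.\ Proposition \ref{gem2}) to split $\X \wedge \hz$ as a wedge of shifted copies of $\X \wedge \mbp$, and finally distribute $\wedge\, X$ through the wedge. Your base-change formulation and the $\Tor$-spectral-sequence variant are just slightly more formal packagings of the same argument, resting on the same key cellularity input.
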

\begin{proof}
	The Hopkins-Morel equivalence (see \cite[Theorem 7.12]{H}) implies in particular that $\hz$ is a quotient spectrum of $\mbp$. It follows that $\hz$ can be obtained from $\mbp$ by applying cones and homotopy colimits and, so, it is an $\mbp$-cellular module from which we get by Theorem \ref{mbpcell} that $$\X \wedge \hz \cong \bigvee_{\alpha \in A} \Sigma^{p_{\alpha},q_{\alpha}}(\X \wedge \mbp)$$ for some set $A$. Now, note that by Theorem \ref{hgmbp}
	\begin{align*}
	H_{**}(k/k)& \cong \pi_{**}(\X \wedge \hz)\\
	& \cong \pi_{**}(\bigvee_{\alpha \in A}\Sigma^{p_{\alpha},q_{\alpha}}(\X \wedge \mbp))\\
	& \cong \bigoplus_{\alpha \in A}\Sigma^{p_{\alpha},q_{\alpha}}\pi_{**}(\X \wedge \mbp)\\
	& \cong \bigoplus_{\alpha \in A} \Sigma^{p_{\alpha},q_{\alpha}} \F.
	\end{align*}
At this point, let $X$ be an object in $\X-\mo$. Then, 
	\begin{align*}
	H^{iso}_{**}(X) &\cong \pi_{**}(\X \wedge \hz \wedge X)\\
	& \cong \pi_{**}(\bigvee_{\alpha \in A}\Sigma^{p_{\alpha},q_{\alpha}}(\X \wedge \mbp \wedge X))\\
	& \cong \bigoplus_{\alpha \in A}\Sigma^{p_{\alpha},q_{\alpha}}\pi_{**}(\X \wedge \mbp \wedge X) \\
	& \cong \bigoplus_{\alpha \in A} \Sigma^{p_{\alpha},q_{\alpha}} \mbp_{**}(X) \\
	&\cong H_{**}(k/k) \otimes_{\F} \mbp_{**}(X)
	\end{align*}
	that finishes the proof.	
\end{proof}

 In the next proposition we compute hom-sets in the isotropic triangulated category of motives between motives of isotropic cellular spectra. They happen to be isomorphic to hom-sets of left $H_{**}(k/k)$-modules between the respective isotropic homology.
 
\begin{prop}
	Let $k$ be a flexible field and $X$ and $Y$ objects in $\X-\mo_{cell}$. Then, there exists an isomorphism
	$$\Hom _{\DM(k/k)_{Tate}}({\mathrm M}(X),{\mathrm M}(Y)) \cong \Hom _{H_{**}(k/k)}(H^{iso}_{**}(X), H^{iso}_{**}(Y)).$$
\end{prop}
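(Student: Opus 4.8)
The plan is to reduce the computation of $\Hom_{\DM(k/k)_{Tate}}(\mathrm{M}(X),\mathrm{M}(Y))$ to the isotropic Adams spectral sequence machinery developed above, by exploiting the fact that the motivic functor $\mathrm{M}$ factors through $\X \wedge \hz$-modules. More precisely, since $\DM(k/k)_{Tate}$ is the category of cellular $\mathrm{M}(\X)$-modules and $\mathrm{M}(\X) \cong \X \wedge \hz$ as an $E_\infty$-ring in the appropriate sense, a motive $\mathrm{M}(X)$ of an isotropic cellular spectrum $X$ is nothing but $\X \wedge \hz \wedge X$ viewed as an $\X \wedge \hz$-cellular module, and hom-sets in $\DM(k/k)_{Tate}$ are hom-sets of $\X \wedge \hz$-modules. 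So the first step is to identify
$$\Hom_{\DM(k/k)_{Tate}}(\mathrm{M}(X),\mathrm{M}(Y)) \cong [\X \wedge \hz \wedge X, \X \wedge \hz \wedge Y]_{\X \wedge \hz}.$$

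The second step is to compute the right-hand side. Apply Proposition \ref{ss} with $E = \X \wedge \hz$, which is a motivic $E_\infty$-ring spectrum with $\pi_{**}(\X \wedge \hz) \cong H_{**}(k/k)$: there is a conditionally convergent spectral sequence with $E_2$-page $\Ext_{H_{**}(k/k)}^{s,t,u}(\pi_{**}(\X \wedge \hz \wedge X), \pi_{**}(\X \wedge \hz \wedge Y))$ converging to $[\Sigma^{t-s,u}(\X \wedge \hz \wedge X), \X \wedge \hz \wedge Y]_{\X \wedge \hz}$. Here $\pi_{**}(\X \wedge \hz \wedge X) = H^{iso}_{**}(X)$, and by Lemma \ref{hiso} this is a \emph{free} $H_{**}(k/k)$-module, namely $H_{**}(k/k) \otimes_{\F} \mbp_{**}(X)$. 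Freeness forces the higher $\Ext$-groups to vanish, so the spectral sequence collapses at $E_2$ and is concentrated in filtration $s=0$, giving
$$[X',Y']_{\X \wedge \hz} \cong \Hom^{0,0}_{H_{**}(k/k)}(H^{iso}_{**}(X), H^{iso}_{**}(Y))$$
after restricting to the bidegree-preserving part (internal degree $(0,0)$), which is exactly $\Hom_{H_{**}(k/k)}(H^{iso}_{**}(X), H^{iso}_{**}(Y))$ in the bigraded convention of Section 2.

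The main obstacle, and the point that needs care, is the identification of $\DM(k/k)$-morphisms with $\X \wedge \hz$-module morphisms and, relatedly, making sure $X$ and $Y$ being $\X$-cellular implies $\X \wedge \hz \wedge X$ and $\X \wedge \hz \wedge Y$ are $\X \wedge \hz$-cellular (so Proposition \ref{ss} applies — it requires the source to be a cellular $E$-module). The cellularity is straightforward: smashing the generators $\Sigma^{p,q}\X$ of $\X\text{-}\mo_{cell}$ with $\hz$ over $\X$ gives the generators $\Sigma^{p,q}(\X \wedge \hz)$ of $\X \wedge \hz\text{-}\mo_{cell}$, and $- \wedge_{\X} (\X \wedge \hz)$ commutes with colimits and cofiber sequences. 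For the first identification one invokes the standard fact that $\DM$ is modules over $\hz$ in $\SH$ (Röndigs--Østvær), which localizes to give $\DM(k/k) \simeq (\X \wedge \hz)\text{-}\mo$ compatibly with $\mathrm{M}$; then $\mathrm{M}(X) = \X \wedge \hz \wedge X$ and Tate motives correspond to cellular modules, so the hom-sets agree by definition. A secondary subtlety is the conditional-versus-strong convergence of the spectral sequence of Proposition \ref{ss}, but since it collapses at $E_2$ with no room for $\lim^1$ issues (it is concentrated in a single filtration), conditional convergence already yields the isomorphism. With these points settled the proof is short.
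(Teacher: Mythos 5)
Your argument is correct and reaches the right conclusion, but it takes a genuinely different route from the paper's. After both of you identify $\Hom_{\DM(k/k)_{Tate}}(\mathrm{M}(X),\mathrm{M}(Y))$ with a hom-set in $\SH(k)$ (the paper writes it as $[X,\X\wedge\hz\wedge Y]$ via the $\mathrm{M}\dashv U$ adjunction from \cite[Proposition 2.4]{T}, you as $[\X\wedge\hz\wedge X,\X\wedge\hz\wedge Y]_{\X\wedge\hz}$ — the two are the same by the free-forgetful adjunction), the paths diverge. You run the universal coefficients spectral sequence of Proposition~\ref{ss} over $E=\X\wedge\hz$ and collapse it by observing that Lemma~\ref{hiso} makes $H^{iso}_{**}(X)$ free over $H_{**}(k/k)$, killing all higher $\Ext$. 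The paper instead passes through $\X\wedge\mbp$-modules: it rewrites the hom-set as $[\X\wedge\mbp\wedge X,\X\wedge\hz\wedge Y]_{\X\wedge\mbp}$, invokes Corollary~\ref{gem3} (possible because $\X\wedge\hz$ is $\X\wedge\mbp$-cellular, a fact established in the proof of Lemma~\ref{hiso}) to get $\Hom_{\F}(\mbp_{**}(X),H^{iso}_{**}(Y))$, and then uses the free-forgetful adjunction for $H_{**}(k/k)$-modules together with Lemma~\ref{hiso}. The paper's argument is a clean chain of isomorphisms with no spectral sequence and no convergence concerns, and it keeps the special role of $\mbp$ and Theorem~\ref{mbpcell} front and center; your argument is slightly more conceptual — ``free source module kills the UCSS'' — and sidesteps the need to know that $\X\wedge\hz$ is $\X\wedge\mbp$-cellular, at the cost of having to address cellularity of the $\X\wedge\hz$-modules and conditional-versus-strong convergence, both of which you handle correctly. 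Either way, Lemma~\ref{hiso} is the decisive input.
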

\begin{proof}
Consider the functor 
$$H^{iso}_{**}: \DM(k/k)_{Tate} \rightarrow H_{**}(k/k)-\mo_{**}$$
which sends each isotropic Tate motive to the respective isotropic motivic homology and let $X$ and $Y$ be motivic spectra in  $\X-\mo_{cell}$. Then, by Theorem \ref{mbpcell}, Lemma \ref{hiso} and \cite[Proposition 2.4]{T} we have that
\begin{align*}
	\Hom _{\DM(k/k)_{Tate}}({\mathrm M}(X),{\mathrm M}(Y)) &\cong [X,\X \wedge \hz \wedge Y] \\
	&\cong [\X \wedge \mbp \wedge X,\X \wedge \hz \wedge Y]_{\X \wedge \mbp} \\
	&\cong \Hom _{\F}(\pi_{**}(\X \wedge \mbp \wedge X), \pi_{**}(\X \wedge \hz \wedge Y))\\
	&\cong \Hom _{\F}(\mbp_{**}(X), H_{**}^{iso}(Y))\\
	&\cong \Hom _{H_{**}(k/k)}(H_{**}(k/k) \otimes_{\F} \mbp_{**}(X), H_{**}^{iso}(Y))\\
	&\cong \Hom _{H_{**}(k/k)}(H^{iso}_{**}(X), H^{iso}_{**}(Y))
	\end{align*}
which completes the proof. 
\end{proof}

\begin{rem}
	\normalfont
	The last result suggests that isotropic Tate motives coming from $\SH(k/k)_{cell}$ are very special in the sense that hom-sets in $\DM(k/k)_{Tate}$ between them are described simply in terms of hom-sets of free $H_{**}(k/k)$-modules. This property does not hold in general, so the next task should be to understand hom-sets in $\DM(k/k)_{Tate}$ between general isotropic Tate motives and try to describe them in algebraic terms. Unfortunately, since $H_{**}(k/k)$ is not concentrated in Chow-Novikov degree 0, the strategy used in \cite{GWX} and adapted in sections 7 and 8 of this paper does not immediately apply. Hence, some new ideas are needed and the hope is to develop them in following works.
\end{rem}

\footnotesize{
	}

\noindent {\scshape Mathematisches Institut, Ludwig-Maximilians-Universit\"at M\"unchen}\\
fabio.tanania@gmail.com

\end{document}